\numberwithin{equation}{section}
\def \c{\mathbb{C}}
\def \z{\mathbb{Z}}
\def \r{\mathbb{R}}
\def \n{\mathbb{N}}
\def \p{\mathbb{P}}
\def \A{\mathcal{A}}
\def \T{\mathbb{T}}
\def \HH{\mathbb{H}}
\def \BB{\mathcal{B}}
\def \X{\mathfrak{X}}
\def \RR{\mathcal{R}}
\def \GL{\textup{GL}}
\def \SP{\textup{Sp}}
\def \SO{\textup{SO}}
\def \lb{\mathcal{L}}
\def \s{{\underline{w}_0}}
\def \w{{\underline{w}_0}}
\def \deg{\textup{deg}}
\def \dim{\textup{dim}}
\def \conv{\textup{conv}}
\def \gr{\textup{gr}}
\def \Vol{\textup{Vol}}
\def \Proj{\textup{Proj}}
\def \Sym{\textup{Sym}}
\def \Lie{\textup{Lie}}
\def \ker{\textup{ker}}
\def \max{\textup{max}}
\def \ratmap{\dashrightarrow}
\def \Re{\textup{Re}}
\def \Im{\textup{Im}}
\theoremstyle{plain}
\newtheorem{Th}{Theorem}[section]
\newtheorem*{Th*}{Theorem}
\newtheorem{Lem}[Th]{Lemma}
\newtheorem{Prop}[Th]{Proposition}
\newtheorem{Cor}[Th]{Corollary}
\theoremstyle{definition}
\newtheorem{Ex}[Th]{Example}
\newtheorem{example}{Example}
\newtheorem{Def}[Th]{Definition}
\newtheorem{Rem}[Th]{Remark}
\newtheorem{DEFIN}{Definition}
\begin{document}
\title{Integrable systems, toric degenerations and Okounkov bodies}

\author{Megumi Harada}
\address{Department of Mathematics and
Statistics\\ McMaster University\\ 1280 Main Street West\\ Hamilton, Ontario L8S4K1\\ Canada}
\email{Megumi.Harada@math.mcmaster.ca}
\urladdr{\url{http://www.math.mcmaster.ca/Megumi.Harada}}
\thanks{The first author is partially supported by an NSERC Discovery Grant,
an NSERC University Faculty Award, and an Ontario Ministry of Research
and Innovation Early Researcher Award.}

\author{Kiumars Kaveh}
\address{Department of Mathematics\\ School
  of Arts and Sciences \\
University of Pittsburgh\\
301 Thackeray Hall \\ Pittsburgh, PA  15260 \\
U.S.A.}
\email{kaveh@pitt.edu} 
\urladdr{\url{http://www.pitt.edu/~kaveh}}
\thanks{The second author is partially supported by a
Simons Foundation Collaboration Grants for Mathematicians and a National Science Foundation Grant.}

\dedicatory{To Askold Georgevich Khovanskii}

\keywords{Okounkov body, toric degeneration, integrable system} 

\date{}

\begin{abstract} 
Let $X$ be a smooth projective variety of dimension $n$ over $\c$ equipped
with a very ample Hermitian line bundle $\mathcal{L}$. In the first part of the paper, 
we show that 
if there exists a toric degeneration of $X$
satisfying some natural hypotheses (which are satisfied in many settings), then there exists
a surjective 
continuous map from $X$ to the special fiber $X_0$ which is a symplectomorphism on an open dense subset $U$.
From this we are then able to construct
a {\it completely integrable system} on $X$ in the sense of symplectic geometry. 
More precisely, we construct a collection of real-valued functions $\{H_1, \ldots, H_n\}$ on $X$ 
which are continuous on all of $X$, smooth on an open dense subset $U$
of $X$, and pairwise Poisson-commute on
$U$. Moreover, our integrable system 
in fact generates a Hamiltonian torus action on $U$. 
In the second part, we show that the toric
degenerations arising in the theory of Newton-Okounkov bodies satisfy all
the hypotheses of the first part of the paper. 
In this case the image of the `moment map' $\mu = (H_1, \ldots, H_n): X
\to \r^n$ is precisely the \textit{Newton-Okounkov body} $\Delta = \Delta(R, v)$
associated to the homogeneous coordinate ring $R$ of $X$, and an
appropriate choice of a valuation $v$ on $R$. 
Our main technical tools come from algebraic geometry, differential
(K\"ahler) geometry, and analysis. Specifically, we use the gradient-Hamiltonian vector field, 
and a subtle generalization of the famous \L{}ojasiewicz gradient inequality for real-valued analytic
functions. Since our construction is valid for a large class of projective varieties $X$, this manuscript
provides a rich source of new examples of integrable systems. We
discuss concrete examples, including elliptic curves, flag varieties of arbitrary
connected complex reductive groups, spherical varieties, and weight
varieties. 
\end{abstract}

\maketitle

\setcounter{tocdepth}{1}
\tableofcontents

\section*{Introduction} \label{sec-intro}
A (completely) integrable system on a symplectic manifold is a
Hamiltonian system which admits a maximal number of first integrals, 
also called ``conservation laws.'' A first integral is a function
which is constant along the Hamiltonian flow; when there are a maximal
number of such, then one can describe the integral curves of the
Hamiltonian vector field implicitly by setting the first integrals
equal to constants. In this sense an integrable system is very
well-behaved. For a recent overview of this subject, see
\cite{PelayoVuNgoc} and its extensive bibliography. The theory of
integrable systems in symplectic geometry is rather dominated by
specific examples (e.g. ``spinning top,'' ``Calogero-Moser system,'' ``Toda
lattice'').  The main contribution of this manuscript, 
summarized in Theorems A, B below, 
is a construction of an integrable system on (an open dense subset of) a
wide class of complex projective varieties. 
Moreover, we additionally show that
the integrable system in fact generates a Hamiltonian
torus action on an open dense subset of the variety.
Much is true about Hamiltonian torus actions that is false for
integrable systems in general.

Our construction
represents a significant expansion of the possible areas of
application of the theory of integrable systems and 
Hamiltonian torus actions, including: 
complex algebraic geometry,
symplectic topology, representation theory, and Schubert calculus, to
name just a few. We therefore expect our manuscript to be of
wide-ranging interest. Moreover, we use a rather broad mix of
techniques coming from algebraic geometry, differential (K\"ahler)
geometry, and analysis, in order to prove our main results. With this
in mind, we have attempted to make our exposition accessible to a
broad audience. 

Throughout, our algebraic varieties are over $\c$. 

We begin with a definition; for details see e.g. \cite{Audin96}. Let
$(M,\omega)$ be a symplectic manifold of real dimension $2n$. Let
$\{f_1, f_2, \ldots, f_n\}$ be smooth functions on $M$. Then (by
  slight abuse of terminology, see Section~\ref{sec-prelim-part1}) we call the
functions $\{f_1, \ldots, f_n\}$ a {\bf (completely) integrable system}
on $M$ if they satisfy the following conditions: 
\begin{itemize} 
\item they pairwise \emph{Poisson-commute}, i.e. $\{f_i, f_j\} =
0$ for all $i, j$, where $\{\cdot, \cdot\}$ denotes the Poisson
bracket on $\mathcal{C}^{\infty}(M)$ induced from $\omega$, and 
\item they are \emph{functionally independent},
i.e. their derivatives $df_1, \ldots, df_n$ are linearly independent
almost everywhere on $M$. 
\end{itemize} 
In fact, below we will state and use a slightly
generalized version of this definition (cf. Definition~\ref{def-int-system}). 

We recall two examples which may be familiar to researchers outside of
symplectic geometry. 

\begin{example} \label{example:intro toric}
  A smooth projective toric variety $X$ is a symplectic manifold,
  equipped with the pullback of the standard Fubini-Study form on
  projective space. The (compact) torus action on $X$ is in fact
  Hamiltonian in the sense of symplectic geometry, and its moment map
  image is precisely the polytope corresponding to $X$. The torus has
  real dimension $n = \frac{1}{2} \dim_{\mathbb{R}}(X)$, and its $n$
  components form an integrable system on $X$.
\end{example}

\begin{example} \label{example:intro GC}
  Let $X = \GL(n,\c)/B$ be the flag variety of nested subspaces in
  $\c^n$. For $\lambda$ a regular highest weight, consider the usual
  Pl\"ucker embedding $X \hookrightarrow \mathbb{P}(V_\lambda)$ where
  $V_\lambda$ denotes the irreducible representation of $\GL(n,\c)$
  with highest weight $\lambda$. Equip $X$ with the
  Kostant-Kirillov-Souriau symplectic form coming from its
  identification with a coadjoint orbit of
  $U(n,\c)$, which meets the positive Weyl chamber at precisely
  $\lambda$. Then Guillemin-Sternberg \cite{G-S} build an integrable
  system, frequently called the \textit{Gel'fand-Cetlin integrable
    system}, on
  $X$ by viewing this coadjoint orbit as a subset
  of the space of Hermitian $n \times n$ matrices. This integrable
  system is intimately related to the well-known Gel'fand-Cetlin
  basis for the irreducible representation $V_\lambda$
  \cite{G-C}. 
\end{example}

Example~\ref{example:intro GC} in fact motivated this paper. Several years ago, we 
had the idea to use the toric degeneration results for flag varieties
given in Gonciulea-Lakshmibai \cite{GoLa96}, Kogan-Miller \cite{KM}, and Caldero \cite{Caldero}, in order 
to construct integrable
systems for
flag varieties of general reductive groups, thus generalizing the
Gel'fand-Cetlin integrable system constructed by Guillemin and
Sternberg. Allen Knutson also had very similar
ideas, according to his post on MathOverflow in response to a
question from David Treumann \cite{KnTr-MO}. (His post is dated January
2010, but he apparently had the ideas already in the mid-90's.)
More formally, a connection
between toric degenerations  and the Gel'fand-Cetlin integrable system was established in detail in 2010 in 
\cite{NNU}, using Ruan's technique of gradient-Hamiltonian
flows. Their ideas inspired us to prove the results contained in the
present paper, which in fact deals not only with flag varieties of
general reductive groups, but with a much more general class of
projective varieties. 

Motivated by the above examples, we make the following definition of a
(completely) integrable system on a projective variety
which differs slightly from some other definitions in
the literature (for more details see Section~\ref{sec-prelim-part1}).
\begin{DEFIN} \label{def-1-intro}
Let $X$ be {an algebraic} variety of dimension $n$. Let $\omega$ be a symplectic form on 
the smooth locus of $X$. We call a collection of real-valued functions $\{F_1, \ldots, F_n\}$ on $X$ a {\bf completely
integrable system} (or an {\bf integrable system}) on $X$ if the following are satisfied:
\begin{itemize}
\item[(a)] $F_1, \ldots, F_n$ are continuous on all of $X$.
\item[(b)] There exists an open dense subset $U$ contained in the
smooth locus of $X$ such that $F_1, \ldots, F_n$ are differentiable
on $U$, and the differentials $dF_1, \ldots, dF_n$ are linearly
independent on $U$. 
\item[(c)] $F_1, \ldots, F_n$ pairwise Poisson-commute on $U$. 
\end{itemize}
\end{DEFIN}

Recall that a special case of an integrable system is given by (the
components of) a moment map for a Hamiltonian torus action on
a symplectic manifold (see Section~\ref{sec-prelim-part1} for
details). Motivated by this, in this manuscript we call the map $\mu
:= (F_1, \ldots, F_n): X \to \r^n$, constructed from the functions
$F_i$ above, the \textbf{moment map of the integrable system}. 

The main purpose of this paper is to construct integrable systems, in
the sense of the definition above, on a large class of projective
algebraic varieties. 
Our main tool is a \emph{toric
degeneration}, in a sense we make precise
below. Let $X$ be an $n$-dimensional projective variety. 

\begin{DEFIN} \label{def-2-intro}
We call an algebraic family $\pi: \X \to \c$ a {\bf toric degeneration of $X$} if the following hold:
\begin{itemize}
\item[(1)] $\pi: \X \to \c$ is a flat family of irreducible varieties. In particular, each fiber is an irreducible reduced scheme.
\item[(2)] The family is trivial over $\c^* = \c \setminus \{0\}$ with fiber isomorphic to $X$; {more precisely,  
there exists a fiber-preserving isomorphism
$\rho: X \times \c^* \to \pi^{-1}(\c^*)$ from the trivial fiber bundle
$X \times \c^* \to \c^*$ to $\pi^{-1}(\c^*)$.}   In other words, $\X
\setminus X_0$ is equipped with a $\c^*$-action, and the map $\pi$ is
$\c^*$-equivariant (where $\c^*$ acts on the base $\c$ by
multiplication). 
\item[(3)] There is an action of $\T = (\c^*)^n$ on the special fiber $X_0$ giving it the 
structure of a toric variety. 
\end{itemize} 
\end{DEFIN}

For the definition of a flat family (of schemes) see \cite[III.9]{Hartshorne}.  
In a flat family all the fibers $X_t = \pi^{-1}(t)$ have the same dimension, and more generally the same Hilbert polynomial 
(\cite[Theorem III.9.9 and Corollary III.9.10]{Hartshorne}). In fact the Hilbert polynomial being the same for all the fibers is equivalent to flatness. 
We note that flatness of a family $\pi: \X \to \c$, in the case when $\X$ is a variety, is a mild condition (\cite[Proposition III.9.7]{Hartshorne}).

We now describe the contents of this paper in more detail. The
paper has two parts. In Part 1 we use a toric degeneration of $X$, satisfying some additional hypotheses explained below,
to construct an integrable system on $X$. In Part 2 we show that this construction can be applied to 
the toric degenerations arising from the theory of Newton-Okounkov bodies (also called Okounkov bodies), as explained in \cite{Anderson10}.
As a consequence, we obtain integrable systems whose moment images are the corresponding Newton-Okounkov bodies. 

First we present precise statements of the
  hypotheses and the result in our main
  theorem in Part 1 (Theorem A below). We require the
  toric degenerations to satisfy some additional properties as follows. 
Let $\pi: \X \to \c$ be a toric degeneration of $X$
{in the sense above and fix the isomorphism $\rho: X
  \times \c^* \to \pi^{-1}(\c^*)$. Note that $\c^*$ acts on $X \times
  \c^*$ by acting standardly on the second factor, and the isomorphism
  $\rho$ then induces 
 a $\c^*$-action on $\X \setminus X_0$}.  We assume the following:
\begin{itemize}
\item[(a)] The family $\X$ is smooth away from the singular locus of the zero fiber $X_0$. 
In particular, the projective variety $X$ is smooth. 
\item[(b)] The family $\X$ is embedded in $\p^N \times \c$ as an algebraic subvariety for some projective space $\p^N$ such that the following hold: 
\begin{itemize} 
\item the map
$\pi$ is the restriction to $\X$ of the standard projection $\p^N \times \c \to \c$ to the second factor, and
\item the action of $\T = (\c^*)^n$ on $X_0$ extends to a linear action of 
$\T$ on $\p^N \times \{0\}$.
\end{itemize}
\end{itemize} 
Now let $\omega$ be a 
{fixed} K\"ahler 
structure on $X$, the general fiber of $\X$ (note $X$ is smooth by
assumption (a) above). In our constructions, we also require that $\p^N \times \c$ (and hence $\X$)
is equipped with a
K\"ahler form which is compatible with $\omega$ in a sense we
now explain. 
Namely, let $\Omega$ be a K\"ahler form on $\p^N$  which is
a constant multiple of a Fubini-Study K\"ahler form (see Remark \ref{rem-toric-int-system-multiple-FS}) and equip $\p^N
\times \c$ with the product K\"ahler structure {given by $\Omega$ and
the standard K\"ahler structure on $\c$.}
We denote by $\tilde{\omega}$ (respectively $\omega_t$) the restriction of this product form to the
smooth locus of the family $\X$ (respectively the fiber {$X_t:=\pi^{-1}(t)$}). 
{For $t \neq 0$, denote by $\rho_t$ the restriction of the isomorphism $\rho: X \times \c^* \to \X \setminus X_0$ to 
$X \times \{t\}$; this is an isomorphism of varieties between $X$ and
the fiber $X_t$}. Also let $T = (S^1)^n$ denote the
compact torus, considered as a subgroup of $\T
= (\c^*)^n$ in the standard fashion. 
With this notation in place we can state the
  following assumptions on our toric degenerations. 
\begin{itemize}
\item[(c)] The pull-back $\rho_1^*(\omega_1)$ {to $X$}
  coincides with the original K\"ahler form $\omega$ on $X$. 
\item[(d)] The K\"ahler form $\Omega$ on $\p^N$ is $T$-invariant;
  {in particular, $\omega_0$ is a $T$-invariant 
K\"ahler form on the toric variety $X_0$.} 
\end{itemize}

The following is the main result of Part~\ref{part-1} (Theorem~\ref{th-A}). 
\begin{Th*}[A] \label{th-A-intro}
Let $X$ be a smooth $n$-dimensional projective variety and let $\omega$ be a K\"ahler structure on $X$.
Suppose that there exists a toric degeneration $\pi: \X \to \c$ of $X$,
in the sense of Definition \ref{def-2-intro}, satisfying
the above conditions (a)-(d). Then:
\begin{itemize}
\item[(1)] There exists a surjective continuous map $\phi: X \to X_0$ which is a 
symplectomorphism restricted to a dense open subset $U \subset X$.
\item[(2)] There exists a completely integrable system $\mu = (F_1, \ldots, F_n)$ on $(X, \omega)$, in the sense of 
Definition \ref{def-1-intro} above, such that its moment image $\Delta$ coincides with the moment image of $(X_0, \omega_0)$ (which is a polytope). 
\item[(3)] Let $U \subset X$ be the open dense subset of $X$ from (1). Then 
the integrable system $\mu=(F_1, \ldots, F_n)$ generates a Hamiltonian
torus action on $U$, and the inverse image {$\mu^{-1}(\Delta^\circ)$ of
the interior of $\Delta$ under the moment map
$\mu: X \to \r^n$ of the integrable system}
lies in the open subset $U$.
\end{itemize}
\end{Th*}
 
We note that: 
\begin{itemize}
\item[(i)] For Theorem (A) part (1), we do not need 
to assume that $X_0$ is a toric variety and that the K\"ahler form $\omega_0$ is torus-invariant. 
\item[(ii)] Theorem (A) still holds (with the same proof) if the special fiber $X_0$ is a union of toric varieties and hence possibly 
reducible as a variety. In this case the moment image will be a union of convex polytopes.
\end{itemize}

We now explain our main result in Part 2, where we apply 
Theorem A to the toric degenerations associated to valuations and 
Newton-Okounkov bodies. 
The theory of Newton-Okounkov bodies is newer than that of toric
degenerations and integrable systems, and was initiated by Okounkov \cite{Okounkov1, Okounkov2} 
and further developed by Kaveh and Khovanskii \cite{KKh1, KKh2} and also independently
by Lazarsfeld and Mustata \cite{LM}. The notion of a Newton-Okounkov body associated to a projective variety far generalizes the notion of Newton polytope of a projective toric variety.

Let $X$ be a smooth $n$-dimensional projective variety
equipped with a very ample line bundle $\lb$ and let $R$ denote the corresponding
homogeneous coordinate ring.  Let $v: \c(X) \setminus \{0\} \to \z^n$ be a valuation with
one-dimensional leaves (see Definition~\ref{def-valuation}) on the space
of rational functions $\c(X)$ on $X$, and 
let $S = S(R)$ denote the corresponding value semigroup in
$\n \times \z^n$ (see Equation~\eqref{eq:definition S}).
We also let $\Delta = \Delta(R)$ denote the \textbf{Newton-Okounkov body} (also called Okounkov body)
corresponding to $R$ and $v$ (Definition~\ref{def-Ok-body}). 
Fix a Hermitian structure $H$ on $\lb$ and let $\omega$ denote the associated K\"ahler structure on $X$, 
induced from the Kodaira embedding to $\p(H^0(X,\lb)^*)$ and the Fubini-Study form on $\p(H^0(X,\lb)^*)$ determined by $H$. 
 With this notation in place we may state the main result of Part 2
(Theorem~\ref{th-B}). 

\begin{Th*}[B]  \label{th-B-intro}
Let $(X, \omega)$ and $S=S(R)$ be as above, and suppose in
addition that the semigroup $S = S(R)$ is
finitely generated. 
\begin{itemize}
\item[(1)] There exists an integrable system 
$\mu = (F_1, \ldots, F_n): X \to \r^n$ on $(X, \omega)$ in the sense of Definition \ref{def-1-intro}. Moreover,
the image of $\mu$ coincides with the Newton-Okounkov body $\Delta = \Delta(R)$.
\item[(2)] Let $U \subset X$ denote the open dense subset of $X$ mentioned in Definition \ref{def-1-intro}. 
Then the integrable system generates a torus 
action on $U$ and the inverse image under $\mu$ of the interior of the moment polytope
$\Delta$ lies entirely in the open subset $U$.
\end{itemize}
\end{Th*}

Theorem (B) can be regarded as a generalization of the fact that the Newton polytope of a toric variety
coincides with the image of its moment map, i.e. its moment polytope.
In particular, Theorem B addresses a question posed to us by
Julius Ross and by Steve Zelditch: does there exist, in general, a
``reasonable'' map from a (smooth) variety $X$ to its Newton-Okounkov body? At least
under the technical assumption that the value semigroup $S = S(R)$ is
finitely generated, our theorem suggests that the answer is
yes, given by the map $\mu$ in Theorem B. Indeed, in this case we can 
do even better: we have a surjective continuous 
map $\phi$ from $X$ to the toric variety $X_0$ associated to the semigroup $S$, which is a symplectomorphism on  a
dense open subset. (The normalization of $X_0$ is the toric variety associated to the polytope $\Delta$.)

The present manuscript opens many doors. 
We now discuss a small selection of possible
avenues for further investigation.  Firstly, 
in future work we intend to further analyze (and in particular, make
explicit computations for) the concrete examples mentioned in this paper (see Section \ref{sec-examples}), as well as the moduli spaces of flat
connections on Riemann surfaces and Jeffrey-Weitsman integrable system
(\cite{J-W}). Secondly, we intend to apply our results to provide 
explicit geometric interpretations of crystal
bases for representations of connected reductive algebraic groups
through the theory of geometric quantization and Bohr-Sommerfeld
fibers (see e.g. \cite{Burns}) using methods similar to the work of Hamilton and Konno in \cite{HamiltonKonno}. 
Thirdly, it has been suggested to us by Milena Pabiniak and Yael Karshon that Theorem B
can be applied to prove Biran's conjecture on the Gromov width of
projective varieties (cf. \cite{K-T}) for a large class of
examples. Fourthly, in the classical case of the Gel'fand-Cetlin
integrable system of Example~\ref{example:intro GC}, it is well-known
that the action variables given by taking eigenvalues of upper-left $k
\times k$ submatrices (which give continuous but non-smooth functions
on $\GL(n,\c)/B$) can be ``smoothed out'' by instead taking the
coefficients of the characteristic polynomials of the same upper-left
submatrices. It would be interesting to better understand whether this
phenomenon occurs for the action variables on $X$ obtained in this manuscript.

Finally, we discuss the issue of the technical
hypothesis present in Theorem B above, namely, the finite
generation of the semigroup $S=S(R)$. Firstly, in the light of Teissier's work \cite{Te99} which constructs toric degenerations even in the non-finitely-generated setting, 
we expect that the methods of 
this manuscript can be modified to prove statements similar to Theorem B even in the case when the value semigroup $S$ 
is not finitely generated.  
In general, 
given a projective variety $X$ and a valuation $v$, {determining
whether or not the corresponding semigroup $S$ is finitely generated
is a subtle and tricky question.} Indeed, it is not even known
for an arbitrary $X$ whether one can 
always find a valuation $v$ with a corresponding value semigroup $S$
that is finitely generated. (However, it is known that such valuations
exist in many interesting and important examples -- cf. Section
\ref{sec-examples}). {In the papers of Anderson \cite{Anderson10} and
Anderson-Kuronya-Lozovanu \cite{AKL12},}
the authors give sufficient conditions on $X$ such that one can find a
valuation $v$ with finitely generated semigroup $S$, but the relation
between the choice of valuation and its corresponding value semigroup
(and Newton-Okounkov body) is still not completely understood and there are many open questions.

We now briefly sketch the ideas of the proofs of our main results. The essential ingredient in the proof of Theorem A
is the so-called ``gradient-Hamiltonian vector field'' (defined by Ruan \cite{Ruan} and also used by 
Nishinou-Nohara-Ueno \cite{NNU}) on $\X$, where we think of $\X$ as a {K\"ahler} space by embedding 
it into the product of an appropriate projective space with $\c$.
The proof of Theorem B relies on a toric degeneration from $X$ to a toric variety
$X_0$ (the normalization of which is the toric variety corresponding to the polytope $\Delta = \Delta(R)$), 
discussed in detail by Anderson \cite{Anderson10}. Let $\pi:\X \to \c$ denote
the flat family with special fiber $\pi^{-1}(0) \cong X_0$ and
$\pi^{-1}(z) = X_z \cong X$ for $z \neq 0$. Since toric varieties are
integrable systems (see Example~\ref{example:intro toric} above), {the
essential idea is to ``pull back''
the integrable system on $X_0$ to one on $X$ using the
gradient-Hamiltonian vector field. }
The main technicalities which must be overcome in order to make {this
idea} rigorous is to appropriately deal with the singular points of
$\X$ and prove that the $f_i$ thus constructed may be
continuously extended to all of $X$. 
It turns out that, in
order to deal with this issue, we need a subtle generalization of
the famous \L{}ojasiewicz inequality.
Finally, we note that since we use both algebro-geometric and differential-geometric techniques 
in our arguments, we work with both the Zariski and the classical (analytic) topologies on algebraic varieties. We have 
attempted to make clear in our statements which topology is meant. 

\medskip
\noindent \textbf{Acknowledgements.} As {we were preparing 
this manuscript, we learned} 
that Allen Knutson had roughly predicted our result
in the MathOverflow discussion thread mentioned above. We thank Allen
for encouraging us to work out these ideas carefully.  We also thank
Chris Manon and Dave Anderson for useful discussions. 
Finally, we thank
the anonymous referees for useful comments which greatly improved the
paper and for pointing out a short proof for
Proposition~\ref{prop-sing-family}. 

\part{Integrable systems from toric degenerations} \label{part-1}

\section{Preliminaries for Part 1} \label{sec-prelim-part1}
We begin by recalling some background. Let $(M,\omega)$ be a
symplectic manifold. A smooth function $H \in C^\infty(M)$, also
called a Hamiltonian,
defines a vector field $\xi_H$, called the {\it Hamiltonian vector field} (associated to $H$) by the equality 
\begin{equation}
  \label{equ-def-xi_H}
  \omega(\xi_H, \cdot) = dH(\cdot).
\end{equation}
The Hamiltonian vector field defines a differential equation
on $M$, the \emph{Hamiltonian system} associated to
$H$. The equation~\eqref{equ-def-xi_H} also defines in turn a \emph{Poisson bracket} on $C^\infty(M)$ by 
\begin{equation}
  \label{equ-def-Poisson-bracket}
  \{f, g \} := \omega(\xi_g, \xi_f).
\end{equation}
A function $f$ on $M$ is said to be a \emph{first integral} of $H$ if
$\{H, f\} = 0$. 
A Hamiltonian system $(M,\omega,H)$ on a $2n$-dimensional symplectic manifold is
{\it completely integrable} (or {\it integrable} for short) if there exist
$n =
\frac{1}{2} \dim_\r M$ first integrals $H_1, H_2, \ldots, H_n$
which 
\begin{itemize}
\item are functionally independent, i.e., there exists an open dense
  subset $U$ of $M$ such that, for all $x \in U$, the differentials
  $dH_i(x)$ are linearly independent, 
\item pairwise Poisson-commute, i.e., $\{H_i, H_j\} =0$ for all $i,j$.
\end{itemize}
By slight abuse of language one often refers to the collection $\{H_1, H_2, \ldots, H_n\}$ as an
integrable system on $(M,\omega)$. 

There are many interesting and well-studied examples of integrable
systems on symplectic manifolds (cf. \cite{Audin96} 
and references therein). However, as far as we are aware, little
is known about methods for constructing interesting 
integrable systems on a general symplectic manifold $(M, \omega)$. 
Moreover, in many cases, 
a (smooth) set of action variables for an integrable system exists only on an open
dense subset of $M$. 
A famous example is the Gel'fand-Cetlin system on
coadjoint orbits of $U(n)$ described by Guillemin and Sternberg
\cite{G-S}. Here, the eigenvalues of the
upper-left $k \times k$ submatrices give rise to action
variables, but only on an open dense subset (because they are not
smooth everywhere). 
The eigenvalues are, however, \emph{continuous}
everywhere. 
In general, it is also desirable to know that a set 
of smooth functions
$\{H_1, H_2, \ldots, H_{\frac{1}{2}\dim_\r M}\}$ giving an integrable
system (or, even better, action variables for an integrable system) on
an open dense subset of $M$ actually 
extends continuously to all of $M$.

The above discussion motivates the following definition. 

\begin{Def} \label{def-int-system}
Let $X$ be a complex variety of dimension $n$ (and hence of real dimension $2n$). Let $\omega$ be a symplectic form on 
the smooth locus of $X$. We call a collection of real-valued functions $\{F_1, \ldots, F_n\}$ on $X$ a {\it completely
integrable system} (or an {\it integrable system}) on $X$ if the following are satisfied:
\begin{itemize}
\item[(a)] $F_1, \ldots, F_n$ are continuous on all of $X$.
\item[(b)] There exists an open dense subset $U$ contained in the
  smooth locus of $X$ such that $F_1, \ldots, F_n$ are differentiable
  on $U$, and the differentials $dF_1, \ldots, dF_n$ are linearly
  independent on $U$. 
\item[(c)] $F_1, \ldots, F_n$ pairwise Poisson-commute on $U$. 
\end{itemize}
We call the map $\mu := (F_1, \ldots, F_n):X \to \r^n$ the {\it moment map} of the integrable system.
\end{Def}

Fix a K\"ahler form $\omega$ on a complex projective variety $X$.
The main goal of the first part of this manuscript is to 
show that one can use a ``good" toric degeneration of $X$
to construct an integrable system
$\{F_1, \ldots, F_n\}$ on $(X, \omega)$ in the sense of Definition
\ref{def-int-system}. 

An important class of completely integrable systems in symplectic
geometry is given by taking components of the moment map of a
  compact symplectic toric manifold. (The analogous objects in the algebraic 
category are projective toric varieties.)
Let $\T = (\c^*)^n$ denote an algebraic torus. Let $T =
(S^1)^n$ denote the corresponding compact torus and 
let $V$ be a finite-dimensional $\T$-module. The action of $\T$ on $V$ induces an action on the projective space $\p(V)$. 
Suppose $H$ is a $T$-invariant Hermitian form on $V$. Then the
corresponding Fubini-Study K\"ahler form $\Omega_H$ on $\p(V)$
is $T$-invariant. In fact, more is true: the $T$-action on $(\p(V),
\Omega_H)$ is Hamiltonian, with moment map $\mu: \p(V) \to \Lie(T)^*$
given by 
\begin{equation} \label{equ-moment-projective-space}
\langle \mu(x), \xi \rangle = \frac{i}{2} \frac{H(\xi \cdot \tilde{x}, \tilde{x})}{H(\tilde{x}, \tilde{x})}
\end{equation} 
where $\xi \in \Lie(T)$ and 
$\tilde{x} \in V$ represents the point $x \in \p(V)$.  

Let $\A = \{ \alpha_0, \ldots, \alpha_r\} \subset \z^n$ be a finite set of integral points. Let $\T$ act linearly on
$V = \c^{r+1}$ via weights $\alpha_0, \ldots, \alpha_r$, that is:
$$ x \cdot (z_0, \cdots, z_r) = (x^{\alpha_0} z_0, \cdots, x^{\alpha_r} z_r),$$
where $x \in \T$ and we use shorthand notation $x^\alpha = x_1^{a_1} \cdots x_n^{a_n}$ for
$\alpha = (a_1, \ldots, a_n)$, $x = (x_1, \ldots, x_n)$.
We assume that $\A$ is large enough so that the induced action of $\T$ on $\p(V)$ is faithful. Then the orbit 
of a generic point, e.g. $(1: \cdots : 1)$ is isomorphic to $\T$ itself.
Let $X_\A$ denote the closure of the orbit of $(1: \cdots :1)$. Then $X$ is a (not necessarily smooth or normal) projective toric variety.
Take the Fubini-Study K\"ahler form $\Omega$ on $\p(V)$ with respect to the standard Hermitian product on $V = \c^{r+1}$. It is invariant under the action of the real torus $T = (S^1)^n$. Equip $X_\A$ (more precisely the smooth locus of $X_\A$) with the K\"ahler structure induced from $\p(V)$. 
The moment map $\mu$ for the projective space $\p(V)$ restricts to a moment map $\mu_\A: X_\A \to
\Lie(T)^* \cong \r^n$ for $X_\A$. The image of $\mu_\A$ is the convex hull 
$\Delta = \conv(\A)$.  
Let $\{H_1, \ldots, H_n\}$ denote the components of $\mu_\A$ 
with respect to some choice of a basis for $\Lie(T)$. Since $X_\A$ is a 
toric variety, i.e. has a dense open orbit isomorphic to $\T$ itself,
we immediately conclude: 
\begin{itemize}
\item The collection $\{H_1, \ldots, H_n\}$ is an integrable system on 
$X_\A$, in the sense of Definition \ref{def-int-system}.

\item The image of the moment map $\mu_\A = (H_1, \ldots, H_n)$ is the rational polytope $\Delta= \conv(\A)$.

\item By construction, the Hamiltonians $H_1, \ldots, H_n$ generate a
  (Hamiltonian) torus action on $X_\A$.
\end{itemize}

\begin{Rem} \label{rem-toric-int-system-multiple-FS}
{If we replace the K\"ahler form $\Omega$ by $c\Omega$ for some non-zero real number $c$, then the moment map and the moment image for the $T$-action are replaced with $c\mu$ and $c\Delta$ respectively. In particular, the moment image may no longer be an integral polytope.}
{This situation arises later in Part \ref{part-2} where we deal with toric degenerations to projective toric varieties associated to polytopes with rational vertices. 
In this case the K\"ahler form on the toric variety coming from the polytope is the restriction of a constant multiple of the Fubini-Study form.}
\end{Rem}

\section{Gradient-Hamiltonian flow} \label{sec-grad-Hamiltonian}
Let $\X$ be an algebraic variety equipped with a K\"ahler structure $\tilde{\omega}$ on its smooth locus $\X_{smooth}$. 
Let $\pi: \X \to \c$ be a morphism (i.e. an algebraic map). We let $X_t$ denote the fiber $\pi^{-1}(t)$ and $\omega_t$ denote the restriction 
of $\tilde{\omega}$ to the smooth points of $X_t \cap \X_{smooth}$. 
Following Ruan \cite{Ruan}, we now define the
\textbf{gradient-Hamiltonian vector field} corresponding to $\pi$ on
$\X_{smooth}$ as follows. Let $\nabla(\Re(\pi))$ denote the gradient
vector field on $\X_{smooth}$ associated to the real part $\Re(\pi)$, with respect to
the K\"ahler metric. Since $\tilde{\omega}$ is K\"ahler and $\pi$ is holomorphic,
the Cauchy-Riemann equations imply that $\nabla(\Re(\pi))$ is related
to the Hamiltonian vector field 
$\xi_{\Im(\pi)}$ of the imaginary part $\Im(\pi)$ with respect to the
K\"ahler (symplectic) form $\tilde{\omega}$ by  
\begin{equation}\label{gradient Re pi and Hamiltonian Im pi}
\nabla(\Re(\pi)) = - \xi_{\Im(\pi)}.
\end{equation}
Let $Z \subset \X$ denote the closed set which is the union of the singular locus of $\X$ 
and the critical set of $\Re(\pi)$, i.e. the set on which $\nabla(\Re(\pi)) = 0$.  
The {\it gradient-Hamiltonian vector field} $V_\pi$, which is defined only on the open set $\X \setminus Z$,
is by definition 
\begin{equation}\label{def-grad-Hamiltonian}
V_\pi := - \frac{\nabla(\Re(\pi))}{\|\nabla(\Re(\pi))\|^2}.  
\end{equation}
Where defined, $V_\pi$ is smooth.
{Moreover, from~\eqref{def-grad-Hamiltonian} it follows that}
\begin{equation}\label{equ-V-normalization}
V_\pi(\Re(\pi)) = -1.
\end{equation}
For $t \in \r_{\geq 0}$ let $\phi_t$ denote the time-$t$ flow
corresponding to $V_\pi$. 
Since $V_\pi$ may not be complete, $\phi_t$ for a given $t$ is not
necessarily defined on all of $\X \setminus Z$. We record the
following facts. 

\begin{Prop} 
\label{prop-grad-Hamiltonian}
Let the notation be as introduced above.
\begin{itemize}
\item[(a)] Suppose $s,t \in \r$ with $s \geq t > 0$. Where defined, the flow $\phi_t$ takes $X_s \cap (\X \setminus Z)$ to
$X_{s-t}$. In particular, where defined, $\phi_t$ takes a point $x \in
X_t$ to a point in the fiber $X_0$. Moreover, if the $\X$ is smooth
away from $X_0$ and there are no critical points of $\pi$ in
$\pi^{-1}([s, s-t])$, then $\phi_t$ is defined everywhere on $X_s$. 
\item[(b)] Where defined, the flow $\phi_t$ preserves the symplectic
  structures, i.e., if $x \in X_z \cap (\X \setminus Z)$ is a point
  where $\phi_t(x)$ is defined, then
  $\phi_t^*(\omega_{z-t})_{\phi_t(x)} = (\omega_z)_x$. 
\end{itemize}
\end{Prop}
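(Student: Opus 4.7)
The plan is to exploit the structural identity $V_\pi = f\,\xi_{\Im(\pi)}$ with $f := \|\nabla(\Re(\pi))\|^{-2}$, obtained by substituting the Cauchy--Riemann relation \eqref{gradient Re pi and Hamiltonian Im pi} into the definition \eqref{def-grad-Hamiltonian}; up to a positive rescaling, $V_\pi$ is the Hamiltonian vector field of $\Im(\pi)$. Given this, part~(a) reduces to two one-line computations. The normalization \eqref{equ-V-normalization} already gives $\tfrac{d}{d\tau}(\Re(\pi)\circ\phi_\tau) = -1$, whence $\Re(\pi)(\phi_t(x)) = \Re(\pi)(x) - t$ wherever $\phi_t(x)$ is defined. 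For the imaginary part, a Hamiltonian is always a first integral of its own flow ($\{\Im(\pi),\Im(\pi)\} = 0$), so $V_\pi(\Im(\pi)) = f\cdot\xi_{\Im(\pi)}(\Im(\pi)) = 0$ and $\Im(\pi)\circ\phi_\tau$ is constant. Combining the two yields $\pi(\phi_t(x)) = \pi(x) - t$, which is (a).

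For part~(b) I would invoke Cartan's magic formula. Since $\tilde{\omega}$ is closed and $\iota_{\xi_{\Im(\pi)}}\tilde{\omega} = d\Im(\pi)$ by the definition \eqref{equ-def-xi_H} of the Hamiltonian vector field, one computes
\[
\mathcal{L}_{V_\pi}\tilde{\omega} \;=\; d\bigl(\iota_{V_\pi}\tilde{\omega}\bigr) \;=\; d\bigl(f\,d\Im(\pi)\bigr) \;=\; df\wedge d\Im(\pi).
\]
By part~(a), $d\phi_\tau$ sends $T_xX_s$ into $T_{\phi_\tau(x)}X_{s-\tau}$, and the $1$-form $d\Im(\pi)$ annihilates every vector tangent to a fiber of $\pi$. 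Hence for any $v,w\in T_xX_s$ the $2$-form $df\wedge d\Im(\pi)$ vanishes on $(d\phi_\tau v,\,d\phi_\tau w)$, so $\tfrac{d}{d\tau}(\phi_\tau^*\tilde{\omega})_x(v,w) = 0$. Integrating in $\tau$ from $0$ to $t$ yields $(\phi_t^*\tilde{\omega})_x(v,w) = \tilde{\omega}_x(v,w)$; since $\omega_s$ and $\omega_{s-t}$ are by definition the restrictions of $\tilde{\omega}$ to the respective fibers, and $d\phi_t(v), d\phi_t(w)\in T_{\phi_t(x)}X_{s-t}$, this is precisely the identity $\phi_t^*(\omega_{s-t})_{\phi_t(x)} = (\omega_s)_x$ claimed in (b).

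The main difficulty here is really bookkeeping rather than substance: $V_\pi$ is not complete on $\X\setminus Z$, so integral curves may leave $\X\setminus Z$ (by escaping to infinity or by running into the locus $Z$) in finite time, forcing the qualifier ``where defined'' throughout. One also needs to know that $X_s\cap(\X\setminus Z)$ consists of smooth points of $X_s$ so that $\omega_s$ makes sense at $\phi_t(x)$; but this is immediate from the definition of $Z$, since $\X\setminus Z\subset\X_{smooth}$ and the condition $\nabla(\Re(\pi))\neq 0$ ensures $\pi$ is a submersion there. Once these issues are acknowledged, the substance of the argument is the essentially formal computation above.
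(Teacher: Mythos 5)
Your proof is correct. The paper in fact states this proposition without proof, simply recording it as known facts about Ruan's gradient-Hamiltonian flow, so there is no in-text argument to compare against; your derivation --- the identity $V_\pi = \|\nabla(\Re(\pi))\|^{-2}\,\xi_{\Im(\pi)}$, the computations $V_\pi(\Re(\pi))=-1$ and $V_\pi(\Im(\pi))=0$ giving $\pi(\phi_t(x))=\pi(x)-t$ for (a), and Cartan's formula together with the vanishing of $d\Im(\pi)$ on fiber directions for (b) --- is exactly the standard argument (essentially Ruan's) and correctly supplies the omitted details, including the observation that $\X\setminus Z$ consists of submersion points so that $T_xX_s=\ker d\pi_x$.
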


Recall that a smooth point $x \in \X$ is a critical point of
$\pi$ if $d\pi(x)=0$. A scalar $z \in \c$ is a critical value if $X_z$ 
contains at least one critical point. Note that by the Cauchy-Riemann
relations $d\pi = 0$ if and only if $d(\Re(\pi)) = 0$, which in turn
holds if and only if $\nabla(\Re(\pi)) = 0$. 
The next lemma is immediate from generic smoothness \cite[Corollary
III.10.7]{Hartshorne}.

\begin{Lem} \label{lem-cri-value}
Let $\pi: \X \to \c$ be a morphism. 
Then the set of critical values of $\pi: \X_{smooth} \to \c$ is a finite set. In particular there exists $\epsilon > 0$ such that 
there is no critical value in the interval $(0, \epsilon]$. 
\end{Lem}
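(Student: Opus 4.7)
The plan is to combine Chevalley's theorem (the algebraic form of Tarski--Seidenberg) on constructibility of images with Sard's theorem on the measure-zero property of critical values. Let $C \subset \X_{smooth}$ denote the critical locus of $\pi|_{\X_{smooth}}$, that is, the set of smooth points where $d\pi = 0$.

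First I would observe that $C$ is Zariski-closed in $\X_{smooth}$: in local algebraic coordinates on a smooth chart, $C$ is cut out by the simultaneous vanishing of the partial derivatives of $\pi$. Hence $C$ is a locally closed, in particular constructible, subset of $\X$. By Chevalley's theorem, its image $\pi(C) \subset \c$ is then a constructible subset of the affine line $\c \cong \AA^1$. The key algebraic leverage is the one-dimensionality of $\AA^1$: every Zariski-closed subset of $\AA^1$ is either finite or all of $\AA^1$, so every constructible subset of $\AA^1$ is either finite or has finite complement.

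Next I would apply Sard's theorem to the smooth real map $\pi|_{\X_{smooth}}\colon \X_{smooth} \to \c \cong \r^2$: the set of critical values $\pi(C)$ has Lebesgue measure zero in $\c$. Since a cofinite subset of $\c$ has infinite Lebesgue measure, $\pi(C)$ cannot be cofinite, and the only remaining possibility from the previous paragraph is that $\pi(C)$ is finite. For the final assertion in the statement, once we know that there are only finitely many critical values in $\c$, in particular only finitely many on the positive real axis, choosing $\epsilon > 0$ strictly smaller than the least positive real critical value (or any positive $\epsilon$ if none exist) yields an interval $(0,\epsilon]$ free of critical values.

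I do not anticipate a serious obstacle in implementing this plan; the result is essentially a two-line corollary of standard theorems, and this is consistent with the parenthetical hint in the statement. The only technical check is that the locus $\{d\pi = 0\}$ is genuinely Zariski-closed in $\X_{smooth}$ -- which is immediate in local algebraic charts -- so that Chevalley's theorem applies to produce a constructible image in $\c$. One could equivalently replace Chevalley by the Tarski--Seidenberg theorem and work with $\pi(C)$ as a semialgebraic subset of $\r^2$, but one still needs the one-dimensionality of the algebraic target $\AA^1$ to upgrade ``measure zero'' to ``finite,'' so the algebraic formulation seems cleanest.
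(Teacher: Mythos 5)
Your proof is correct and follows essentially the same route as the paper: both argue that the critical locus is constructible (the paper says semi-algebraic and invokes the complex Tarski theorem; you invoke Chevalley, which as you note is the cleaner algebraic formulation), that its image in the line is therefore finite or cofinite, and then use Sard's theorem to rule out the cofinite case.
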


\begin{Rem} \label{rem-toric-deg-cri-pt}
By definition the families we consider in Section \ref{sec-int-system} (namely toric degenerations) are trivial over $\c^*$. Moreover we assume that 
the generic fiber is smooth. In this case the only possible critical value is $0$ and Lemma \ref{lem-cri-value} is not
needed. 
\end{Rem}

Next we recall the following fundamental theorem on the solutions of ordinary differential equations
(see for example \cite[Theorem 1.7.1]{Hu-Li}):
\begin{Th} \label{th-ODE-continuation}
Let $V$ be a continuously differentiable vector field defined in an open subset $U$ of 
a Euclidean space. Then for any $x_0 \in U$, the flow $\phi_t$ of $V$ is defined at $x_0$ for 
$0 \leq t < b$ where we have the following possibilities: 
\begin{enumerate} 
\item $b = \infty$ or 
\item $b < \infty$ and 
$\phi_t(x_0)$ is unbounded as $t \to b^-$, or 
\item $b < \infty$ and 
$\phi_t(x_0)$ approaches the boundary of $U$ as $t \to b^-$.
\end{enumerate}
\end{Th}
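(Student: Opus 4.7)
The plan is to prove this classical ODE result in two stages: (i) local existence and uniqueness via a Picard--Lindel\"of fixed-point argument, and (ii) an extension/maximality argument that forces the stated trichotomy at the right endpoint of the maximal interval of existence.

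For stage (i), since $V$ is $C^1$ on $U$ it is locally Lipschitz. Given $x_0 \in U$, choose a closed ball $\overline{B_r(x_0)} \subset U$ and set $M := \sup_{\overline{B_r(x_0)}} \|V\|$ with $L$ a Lipschitz constant for $V$ on this ball. On the Banach space $C([0,\tau], \r^n)$ I would consider the operator $(\mathcal{T}\phi)(t) = x_0 + \int_0^t V(\phi(s))\, ds$, restricted to curves staying in $\overline{B_r(x_0)}$. For $\tau < \min(r/M, 1/(2L))$, this is a contraction on a closed subset, and its unique fixed point is the local solution $t \mapsto \phi_t(x_0)$. Uniqueness via a standard Gr\"onwall estimate allows one to paste together local solutions, and I would then set $b := \sup\{T > 0 : \phi_t(x_0) \text{ is defined on } [0, T)\}$, obtaining a solution on the maximal interval $[0, b)$.

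For stage (ii), if $b = \infty$ we are in case (1). Otherwise $b < \infty$, and I would prove the contrapositive of the dichotomy (2)--(3): suppose the trajectory is both bounded and stays at positive distance $\delta > 0$ from $\partial U$. Then $K := \overline{\{\phi_t(x_0) : t \in [0, b)\}}$ is a compact subset of $U$, so $\sup_K \|V\|$ is finite. The estimate $\|\phi_{t_1}(x_0) - \phi_{t_2}(x_0)\| \leq (\sup_K \|V\|)\, |t_2 - t_1|$ shows that $\phi_t(x_0)$ is Cauchy as $t \to b^-$, so the limit $x^* := \lim_{t \to b^-} \phi_t(x_0)$ exists and lies in $K \subset U$. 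Applying the local existence theorem at $x^*$ produces a solution on some interval $(b - \epsilon, b + \epsilon)$, contradicting maximality of $b$. Hence at least one of (2), (3) must hold.

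The main obstacle is really the compactness/Cauchy step at the endpoint. The $C^1$ hypothesis is crucial in two places: it delivers the local Lipschitz continuity required by Picard--Lindel\"of, and it guarantees the uniform bound on $\|V\|$ over the compact trajectory closure $K$ used in the Cauchy estimate. Under this hypothesis the statement follows directly from this standard two-step framework, with no further subtleties.
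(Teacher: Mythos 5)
The paper does not prove this theorem; it cites it as a standard fact from the theory of ordinary differential equations (to \cite{Hu-Li}), so there is no in-paper argument to compare against. Your two-stage plan --- Picard--Lindel\"of local existence/uniqueness (stage (i)) followed by a maximal-interval extension argument (stage (ii)) --- is the usual textbook route, and the local theory in stage (i) is correct as outlined.

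Stage (ii), however, rests on a contrapositive that is slightly stronger than what negating the trichotomy actually gives you, and this hides a genuine subtlety. You assume that the trajectory \emph{stays} at uniform distance $\delta>0$ from $\partial U$ for all $t$ near $b$. But the natural reading of ``$\phi_t(x_0)$ approaches the boundary as $t\to b^-$'' is $d(\phi_t(x_0),\partial U)\to 0$; its negation only yields $\limsup_{t\to b^-} d(\phi_t(x_0),\partial U)>0$, i.e., a \emph{subsequence} $t_n\to b^-$ with $d(\phi_{t_n}(x_0),\partial U)\ge\delta$, while the trajectory may still dip arbitrarily close to $\partial U$ at other times. In that oscillating scenario the closure of the full trajectory need not lie in $U$, the uniform bound $\sup_K\lVert V\rVert$ is unavailable, and your Cauchy estimate collapses. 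The standard fix is sequential rather than Cauchy: the points $\phi_{t_n}(x_0)$ lie in the compact set $K=\{x\in U: |x|\le R,\ d(x,\partial U)\ge\delta\}\subset U$, hence have a subsequential limit $x^*\in U$; local Picard--Lindel\"of existence gives a \emph{uniform} lifespan $\epsilon>0$ for solutions starting near $x^*$; restarting the flow from $\phi_{t_n}(x_0)$ with $n$ large enough that $|\phi_{t_n}(x_0)-x^*|$ is small and $t_n>b-\epsilon$ then extends the solution past $b$, contradicting maximality. This sequential form is exactly the escape lemma (the trajectory eventually leaves every compact $K\subset U$), and when combined with boundedness (by applying it to $K_\eta=\{x\in U:|x|\le R,\ d(x,\partial U)\ge\eta\}$ for each $\eta>0$) it does deliver the strong conclusion $d(\phi_t(x_0),\partial U)\to 0$. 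So your stage (ii) should be run on a subsequence with a uniform-lifespan local existence estimate, not on the whole tail via a Cauchy estimate.
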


Let $U_0 \subset X_0$ denote the open set $X_0 \setminus (X_0 \cap Z)$. Then the gradient-Hamiltonian vector field 
is defined at all $x_0 \in U_0$. {Note that, a priori,
  the set $U_0$ may be empty; however, in} the next section (Section \ref{sec-flatness-cri-pts}) we prove
that {under some assumptions on the family $\X$,
the set $U_0$ is in fact 
dense in $X_0$.}

\begin{Lem} \label{lem-flow-grad-Hamiltonian}
Let the notation be as above. Assume that the family $\X$ is smooth away from the fiber $X_0$.
Further assume that the map $\pi: \X \to \c$ is proper with respect to the classical topologies on $\X$ and $\c$. 
Let $\epsilon>0$ be such that there are no critical values in
$(0,\epsilon] \subset \c$. Then the flow $\phi_{-\epsilon}$ is defined for all $x_0 \in U_0$.
In particular, if there are no critical values in $(0, +\infty)$ then $\phi_{-t}$ is defined for all $x \in U_0$ and all $t>0$.
\end{Lem}

\begin{proof}
{Let $x_0 \in U_0$. From the theory of ODEs we know
  there exists $b>0$ such that the flow $\phi_{-t}(x_0)$ is defined
  for $t \in [0,b)$. If $b \geq \epsilon$ there is nothing to prove,
  so suppose $b<\epsilon$. From
  Proposition~\ref{prop-grad-Hamiltonian} (a) we also know that
  $\phi_{-t}(x_0) \in X_t$ for each such $t$. Thus the flow defines a
  continuous map from $[0,b)$ to $\pi^{-1}([0,b])$ by $t \mapsto
  \phi_{-t}(x_0)$. Since $\pi$ is proper, the inverse image
  $\pi^{-1}([0,b])$ is a compact set, so there exists a point $x_b \in
  X_b$ such that $\phi_{-t}(x_0) \to x_b$ as $t \to b^-$. By
  assumption on the family $\X$, the fiber $X_b$ lies in the smooth
  locus of $\X$. Moreover by the assumption on $\epsilon$
  we also know that $X_b$ contains no
  critical points of $\pi$. Thus the limit point $x_b$ is contained in
  $\X \setminus Z$ and so by Theorem~\ref{th-ODE-continuation} we
  conclude the flow is defined at $t=b$ and takes the value
  $\phi_{-b}(x_0) = x_b$. Again by the theory of ODEs this implies the
  flow is defined not just on $[0,b)$ but on $[0,b')$ for some
  $b'>b$. Repeating the above argument with $b'$ in place of $b$ as
  necessary shows that the flow is in fact defined also at
  $t=\epsilon$, as desired. }
\end{proof}

\section{Critical points of the family} \label{sec-flatness-cri-pts}
In this section we show that under mild assumptions on the family
$\X$, the set of critical points of $\pi$ intersected with $X_0$ lies in the singular locus of $X_0$. It follows that
the set $U_0 \subseteq X_0$ discussed in the previous section is open and dense in $X_0$ (Corollary \ref{cor-Uzero-open-dense}).

By a flat family of varieties over $\c$ we mean a flat morphism $\pi:
\X \to \c$ on a quasiprojective variety $\X$ such that the fibers are reduced as schemes. Since $\pi$ is
a flat morphism all the fibers have the same dimension $n = \dim(\X) -
1$. Before stating the next result we note that without loss of
generality we can always assume that the family $\X$ is embedded in $\p^N
\times \c$ for some projective space $\p^N$ and that the map $\pi$ is the
restriction of the projection $\p^N \times \c \to \c$ on the second
factor. To see this, let $i: \X \hookrightarrow \p^N$ be an embedding
of the quasi-projective variety $\X$ in a projective space $\p^N$.
Then the map $\tilde{i}: \X \to \p^N \times \c$ given by $\tilde{i}(x)
= (i(x), \pi(x))$ is also an embedding and $\pi = p_2 \circ \tilde{i}$
where $p_2$ is the projection on the second factor, as claimed.

\begin{Prop} \label{prop-sing-family} 
Let $\pi: \X \to \c$ be a flat family of varieties with $\dim(\X) = n+1$. Consider a fiber $X_z = \pi^{-1}(z)$.
If $p \in X_z$ is a smooth point of $X_z$ then it is a smooth point of $\X$ as well.  
\end{Prop}
\begin{proof}
A point $p$ is a smooth point of $X_z$ if and only if $\dim(T_p X_z) = \dim(X_z)$ which in turn is equal to $\dim(\X) - 1$ because of the 
flatness of the family. Since $X_z$ is a Cartier divisor in $\X$ we know $\dim(T_p \X) - 1 \leq \dim(T_p X_z)$.
So $\dim(T_p \X) \leq \dim(\X)$, i.e. $p$ is a smooth point of $\X$.

\end{proof}

We now apply this to our setting in
Section~\ref{sec-grad-Hamiltonian}.
Recall that $Z \subset \X$ denotes the closed set which is the union of the 
singular locus of $\X$ and the 
critical set of $\pi$, and $U_0$ denotes the (Zariski-open)
subset $X_0 \setminus (X_0 \cap Z)$.

We state the following simple fact whose proof is immediate from the definitions.
\begin{Lem} \label{lem-cri-pt-singular}
Let $\X$ be an algebraic variety and $\pi: \X \to \c$ a morphism. Suppose a fiber $X_z$ is reduced as a scheme. Take a point $p \in X_z$ such that $\X$ is smooth at $p$. If $p$ is 
a critical point of $\pi$, i.e. $d\pi(p) = 0$, then $p$ is a singular point of $X_z$.
\end{Lem}

The following is a simple consequence of Proposition~\ref{prop-sing-family} and the above lemma applied to
the fiber $X_0$. 

\begin{Cor} \label{cor-Uzero-open-dense}
Let $\pi: \X \to \c$ be a flat 
family of varieties. Consider the fiber $X_0$. Then $\pi$ does not have any critical points on the smooth locus of $X_0$.
It follows that $U_0$ is precisely the smooth locus of $X_0$ and in particular $U_0$ is dense in $X_0$. 
\end{Cor}
\begin{proof}
Take $p \in X_0$ which is a smooth point for $X_0$. By Proposition \ref{prop-sing-family}, $p$ is a smooth point of $\X$ as well. Now $p$ cannot be a critical point of $\pi$
because otherwise by Lemma \ref{lem-cri-pt-singular} it should be a singular point of $X_0$ which is a contradiction. This proves the claim.
\end{proof}

For the next corollary we need to equip the family $\X$ with a K\"ahler structure. 
Suppose $\Omega$ is a
  Fubini-Study K\"ahler form on $\p^N$ and suppose $\p^N \times \c$ is
  equipped with the product K\"ahler structure $\Omega \times \left(
    \frac{i}{2} dz \wedge d\bar{z}\right)$ where the second factor is
  the standard K\"ahler structure on $\c$ (here $z$ is the coordinate
  on $\c$). We assume that (the smooth locus of) $\X$ is equipped with
  the K\"ahler structure obtained by restricting this product
  structure to $\X$. 

\begin{Cor}\label{cor-flow-grad-Hamiltonian}
With notation and assumptions as in the discussion above and 
Corollary~\ref{cor-Uzero-open-dense}, let $\epsilon>0$ be as in Lemma~\ref{lem-flow-grad-Hamiltonian} i.e. there are no critical values of $\pi$ in $(0, \epsilon]$. 
Then the image {$U_\epsilon := \phi_{-\epsilon}(U_0)$} of $U_0$ under the flow $\phi_{-\epsilon}$
is an open dense subset of $X_\epsilon$.
\end{Cor}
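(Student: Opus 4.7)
My plan is to establish openness of $U_\epsilon$ from the general fact that flows of smooth vector fields are local diffeomorphisms, and density from a volume-preservation argument that exploits the symplectic nature of the flow together with the degree-constancy of flat families.

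For openness: by Lemma~\ref{lem-flow-grad-Hamiltonian} the flow $\phi_{-\epsilon}$ is defined on all of $U_0$, and, being the time $-\epsilon$ flow of the smooth vector field $V_\pi$ on the open set $\X \setminus Z$, it is a diffeomorphism from its (open) domain onto its (open) image in $\X \setminus Z$. Since $V_\pi(\Re(\pi)) = -1$ (and $V_\pi(\Im(\pi))=0$ by Cauchy--Riemann), the flow shifts fibers of $\pi$, so $\phi_{-\epsilon}$ carries $X_0 \cap (\X \setminus Z) = U_0$ into $X_\epsilon$. Moreover $U_0$ is contained in the smooth locus of $X_0$ (since $d\pi \neq 0$ there makes $X_0$ a smooth hypersurface of $\X$ near each point of $U_0$) and $X_\epsilon$ is smooth throughout by our standing assumption on $\X$ and the choice of $\epsilon$. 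Since $\phi_{-\epsilon}$ has smooth inverse $\phi_\epsilon$ on its image, the restriction $\phi_{-\epsilon}\colon U_0 \to X_\epsilon$ is a diffeomorphism onto $U_\epsilon$, which is therefore open in $X_\epsilon$.

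For density, by Proposition~\ref{prop-grad-Hamiltonian}(b) the map $\phi_{-\epsilon}\colon (U_0, \omega_0) \to (U_\epsilon, \omega_\epsilon)$ is a symplectomorphism and so preserves the Liouville volume form $\omega^n/n!$, giving
\begin{equation*}
\int_{U_0} \frac{\omega_0^n}{n!} \;=\; \int_{U_\epsilon} \frac{\omega_\epsilon^n}{n!}.
\end{equation*}
The complement $X_0 \setminus U_0 = X_0 \cap Z$ is a proper algebraic subvariety of $X_0$, so it has $\omega_0^n$-measure zero and the left-hand side equals $\Vol_{\omega_0}(X_0)$. Since each $\omega_t$ is the restriction of the fixed Fubini--Study form $\Omega$ to $X_t \subset \p^N$, the total symplectic volume $\Vol_{\omega_t}(X_t)$ equals $\deg(X_t)$ times a constant independent of $t$; by Theorem~\ref{th-Hilbert-poly-flat-family} this degree is constant along the family, so $\Vol_{\omega_0}(X_0) = \Vol_{\omega_\epsilon}(X_\epsilon)$. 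Chaining the equalities yields $\Vol_{\omega_\epsilon}(U_\epsilon) = \Vol_{\omega_\epsilon}(X_\epsilon)$, so the closed complement $X_\epsilon \setminus U_\epsilon$ has measure zero in the smooth compact K\"ahler manifold $X_\epsilon$ and therefore empty interior; that is, $U_\epsilon$ is dense.

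The step that will require the most care is the comparison $\Vol_{\omega_0}(X_0) = \Vol_{\omega_\epsilon}(X_\epsilon)$, since $X_0$ is typically singular: one must justify integrating $\omega_0^n$ only over the smooth locus (using that the singular locus is a proper subvariety, hence of measure zero) and invoke the degree-equals-Fubini--Study-volume identity in a form valid for possibly singular projective subvarieties of $\p^N$. An alternative, more hands-on route would be to characterize $X_\epsilon \setminus U_\epsilon$ as the set of points whose forward $V_\pi$-trajectory tends to $X_0 \cap Z$ as $t \to \epsilon^-$, and to show directly by local analysis near $X_0 \cap Z$ that this set has empty interior; but because $X_0 \cap Z$ can be geometrically complicated, the measure-theoretic argument above seems cleaner and more robust.
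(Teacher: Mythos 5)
Your proof is correct and follows essentially the same approach as the paper: openness from the flow being a (local) diffeomorphism, and density via the volume-preservation of the gradient-Hamiltonian flow combined with the degree-equals-symplectic-volume identity and the constancy of degree in a flat family. Your explicit attention to the singular locus of $X_0$ (justifying that one may integrate $\omega_0^n$ over the smooth locus alone) makes the needed measure-zero point, which the paper uses implicitly via Corollary~\ref{cor:Uzero open and dense}.
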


\begin{proof}
The flow $\phi_{-\epsilon}$ is a local diffeomorphism. 
Since $\phi_{-\epsilon}$ sends $U_0$ to $X_\epsilon$, the image 
$U_\epsilon := \phi_{-\epsilon}(U_0)$ is 
an open subset of $X_\epsilon$. It remains to show that $U_\epsilon$ is dense in $X_\epsilon$. 
To this end, recall that by Proposition \ref{prop-grad-Hamiltonian} 
we know $\phi_{-\epsilon}^*(\omega_\epsilon) = \omega_0$. Thus 
\begin{equation} \label{equ-integral-U-epsilon}
\int_{X_0} \omega_0^n = \int_{U_0} \omega_0^n = \int_{U_0} (\phi_{-\epsilon}^*(\omega_\epsilon))^n 
= \int_{U_\epsilon} \omega_\epsilon^n
\end{equation}
where the first equality uses Corollary~\ref{cor-Uzero-open-dense}. 
By assumption, $\omega_0$ and $\omega_\epsilon$ are restrictions of 
the K\"ahler form $\Omega$ on $\p^N$. Furthermore, by
\cite[Section 5C]{Mumford}, the symplectic volumes $\int_{X_0} \omega_0^n$ and 
$\int_{X_\epsilon} \omega_\epsilon^n$ are equal (up to a normalization factor, which is in fact $n!$) 
to the degrees of $X_0$ and $X_\epsilon$ regarded as 
subvarieties of $\p^N$. 
Since the family $\X$ is flat, 
the degrees of $X_0$ and $X_\epsilon$ are equal (see
\cite[Theorem III.9.9]{Hartshorne}) and thus 
\begin{equation} \label{equ-integral-X-epsilon}
\int_{X_0} \omega_0^n = \int_{X_\epsilon} \omega_\epsilon^n.
\end{equation}
Equations \eqref{equ-integral-U-epsilon} and
\eqref{equ-integral-X-epsilon} together imply that $\int_{U_\epsilon}
\omega_\epsilon^n = \int_{X_\epsilon} \omega_\epsilon^n$. Thus
$X_\epsilon \setminus U_\epsilon$ has empty interior and $U_\epsilon$
is dense in $X_\epsilon$ as desired.
\end{proof}

\section{Continuity of the gradient-Hamiltonian flow} \label{sec-contin}

{The main result of this (technical) section is
  Theorem~\ref{th-flow-continuous}, the proof of which depends on a
  subtle generalization of the famous ``\L{}ojasiewicz gradient
  inequality''. The point of the theorem is to prove 
that for a small enough parameter
$\epsilon > 0$, the flow $\phi_{\epsilon}$ of the gradient-Hamiltonian
vector field on $\X$, which is a priori
not defined at all points of $X_\epsilon$, can be extended
to a continuous function on all of $X_\epsilon$. (The continuity is with respect to the classical topology.) }
We refer the reader to the paper of Lerman \cite{Lerman} for background, where we learned about gradient flows in the context of symplectic geometry. 

{In what follows we continue to place the 
hypotheses on the family $\X$ and its K\"ahler structure from
Sections~\ref{sec-grad-Hamiltonian}
and~\ref{sec-flatness-cri-pts}. Specifically, we assume that}
$\pi: \X \to \c$ is a flat family of irreducible varieties 
where  $\X \subset \p^N \times \c$ as an algebraic subvariety and $\pi$ is the restriction of the projection to the second factor. 
We also assume that $\X$ is smooth away from the singular locus of $X_0$. 
We fix a K\"ahler form $\tilde{\omega}$ on $\X$ which is the
restriction of a product K\"ahler form on $\p^N \times \c$ of the form
$\Omega \times \left(\frac{i}{2} dz \wedge d\bar{z}\right)$ where
$\Omega$ is a Fubini-Study K\"ahler structure on $\p^N$. Recall that $\phi_t$ is the flow associated to the 
gradient-Hamiltonian vector field defined in Section~\ref{sec-grad-Hamiltonian} and $U_0$ is the open set $X_0 \setminus (X_0 \cap Z)$ where $Z$ is the closed set consisting of the union of the singular locus of $\X$ and the critical set of $\pi$. Also recall that by Lemma~\ref{lem-flow-grad-Hamiltonian} and Corollary~\ref{cor-flow-grad-Hamiltonian} we know there exists an $\epsilon>0$ such that $U_\epsilon := \phi_{-\epsilon}(U_0)$ is open and dense in $X_\epsilon$.

\begin{Th} \label{th-flow-continuous}
With the notation and assumptions as above, let $\epsilon > 0$ satisfy the condition of
Lemma~\ref{lem-flow-grad-Hamiltonian}, i.e. there are no critical values in the interval $(0, \epsilon]$. 
Then the flow $\phi_\epsilon$ of the
gradient-Hamiltonian vector field associated to $f := \Re(\pi)$, a priori defined only on {the (open and dense) subset}
$U_\epsilon \subseteq X_\epsilon$, extends 
to a continuous {(with respect to the classical topology)} function defined on all of $X_\epsilon$, with image in $X_0$.
\end{Th}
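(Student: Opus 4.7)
The plan is to establish two properties for each $x \in X_\epsilon$: (i) the limit $\lim_{t\to\epsilon^-} \phi_t(x)$ exists as a point of $X_0$, and (ii) the resulting map $\phi_\epsilon : X_\epsilon \to X_0$ is continuous. First note that since $\pi$ has no critical values in $(0,\epsilon]$ and since the singular locus of $\X$ lies inside $X_0$ by hypothesis, every fiber $X_s$ with $s \in (0,\epsilon]$ is contained in $\X \setminus Z$. Hence for any $x \in X_\epsilon$ the trajectory $\phi_t(x)$ is defined for all $t \in [0,\epsilon)$, and by Proposition~\ref{prop-grad-Hamiltonian}(a) it moves through the fiber $X_{\epsilon - t}$. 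Properness of $\pi$ forces the trajectory to remain in the compact set $\pi^{-1}([0,\epsilon])$, so its set of accumulation points as $t \to \epsilon^-$ is nonempty and lies in $X_0$.

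The crux of step (i) is ruling out oscillation, for which I would appeal to a version of the \L{}ojasiewicz gradient inequality valid for the real-analytic function $f = \Re(\pi)$ on the (possibly singular) analytic space $\X$. In the classical smooth setting, such an inequality says that near any critical point $x_0$ of $f$ with $f(x_0) = 0$ there exist $C > 0$ and $\theta \in (0,1)$ with $\|\nabla f(x)\| \geq C |f(x)|^\theta$. Granting the analogous inequality on $\X$ (the generalization advertised in the introduction), the length of the trajectory segment $\phi_{[t_1,t_2]}(x)$ can be computed as
\begin{equation*}
\int_{t_1}^{t_2} \|V_\pi(\phi_t(x))\| \, dt \;=\; \int_{t_1}^{t_2} \|\nabla f(\phi_t(x))\|^{-1} \, dt,
\end{equation*}
and using $f(\phi_t(x)) = \epsilon - t$ (from~\eqref{equ-V-normalization}) together with the \L{}ojasiewicz bound gives an integrand of order $(\epsilon - t)^{-\theta}$, which is integrable on $[0,\epsilon]$ since $\theta < 1$. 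Thus the trajectory has finite arc length, and combined with the existence of an accumulation point in $X_0$, this forces convergence to a unique limit point which I define to be $\phi_\epsilon(x)$.

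For step (ii), continuity, I would combine two facts. Away from $X_0 \cap Z$ the flow is smooth and so depends continuously on initial conditions on any time interval $[0, \epsilon - \delta]$ with $\delta > 0$. Near points of $X_0 \cap Z$, I would use a \emph{uniform} version of the \L{}ojasiewicz inequality on a compact neighborhood (the constants $C$ and $\theta$ can be taken uniform on compact sets by standard subanalytic arguments), which gives a uniform bound on the length of the final segment $\phi_{[\epsilon - \delta, \epsilon)}(x)$ for $x$ in a neighborhood of the problematic fiber point. Letting $\delta \to 0$ then reconciles the two estimates: given $x_n \to x$ in $X_\epsilon$, one first flows both $x$ and the $x_n$ for time $\epsilon - \delta$ (getting close points by smooth dependence), then uses the uniform length bound to see that both trajectories must land within a small neighborhood of a single limit point as $\delta \to 0$.

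The main obstacle is the \L{}ojasiewicz-type gradient inequality itself, since the classical statement is formulated on smooth real-analytic manifolds while $\X$ is genuinely singular along $X_0 \cap Z$. I would expect the proof to go through either by invoking a stratified or subanalytic form of \L{}ojasiewicz (in the tradition of Bierstone-Milman or Kurdyka), or by pulling the situation back to a resolution of singularities of $\X$, where the classical inequality applies and then descends to give the required bound on $\X$. The need to obtain constants uniform on compact sets, in order to get continuity rather than just pointwise convergence, is the subtle point that goes beyond the standard statement.
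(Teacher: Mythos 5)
Your proposal takes essentially the same approach as the paper's proof: the paper invokes exactly the Kurdyka--Parusinski generalization of the \L{}ojasiewicz gradient inequality for (possibly singular) algebraic sets that you anticipate (Theorem~\ref{th-general-Lojasiewicz}), obtains uniform constants $c$ and $\alpha$ by covering the compact $X_0$ with finitely many of the provided neighborhoods, proves finite arc length via the integrability of $(\epsilon - t)^{-\alpha}$, and establishes continuity by combining smooth dependence on initial conditions up to time $\epsilon - \delta$ with the uniform bound on the remaining tail length, precisely as you outline.
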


\begin{Rem} \label{rem-flow-continuous}
In general, the flow with respect to the gradient vector field of a
smooth function $f$ on a Riemannian manifold $X$ does not give rise to
a continuous function between the level sets of $f$. 
The reason that we can continuously extend our flow $\phi_\epsilon$
is that we work in an algebraic (and hence real analytic) setting. 
\end{Rem}

The proof of Theorem \ref{th-flow-continuous} is based on the famous
``\L{}ojasiewicz gradient inequality'' for analytic functions
(cf. \cite{Loj} {and}
\cite[pages 763 and 765]{KMP}):

\begin{Th}\label{th-Lojasiewicz}
Let $f$ be a real-valued analytic function defined on some open subset $W \subset \r^m$. 
Then for any $x \in W$ there exists an open neighborhood $U_x$ of $x$ and constants $c_x > 0$ and $0 < \alpha_x < 1$ such that for all $y \in U_x$:
\begin{equation} \label{equ-Lojasiewicz}
|| \nabla f(y) || \geq c_x |f(y) - f(x)|^{\alpha_x}.
\end{equation}
\end{Th}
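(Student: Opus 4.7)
The plan is to reduce to a local problem at $x$ and split into two cases depending on whether $\nabla f(x)$ vanishes. By translation we may assume $x = 0$ and $f(x) = 0$. If $\nabla f(0) \neq 0$, then $\|\nabla f\|$ is bounded below by some positive constant $c_x$ on a sufficiently small neighborhood $U_x$ of $0$, while $|f(y)|^\alpha$ tends to $0$ as $y \to 0$; hence \eqref{equ-Lojasiewicz} holds for any $\alpha \in (0,1)$ after shrinking $U_x$. The substantive case is when $\nabla f(0) = 0$, so $0$ is a critical point of $f$.

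For this critical case, I would argue by contradiction using the \emph{curve selection lemma} for semi-analytic sets, a standard tool in \L{}ojasiewicz's theory. Suppose no exponent $\alpha \in (0,1)$ and no constant $c > 0$ yield the inequality on any neighborhood of $0$. For each fixed $\alpha \in (0,1)$ the semi-analytic set
$$ A_\alpha \;=\; \bigl\{ y \in W : y \neq 0,\ \|\nabla f(y)\|^2 < |f(y)|^{2\alpha} \bigr\} $$
then has $0$ in its closure. The curve selection lemma produces a real analytic arc $\gamma : [0,\delta) \to W$ with $\gamma(0) = 0$, $\gamma(t) \neq 0$ for $t > 0$, and $\gamma((0,\delta)) \subset A_\alpha$. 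Note that $f(\gamma(t)) \neq 0$ for $t > 0$, since otherwise $A_\alpha$ would force $\|\nabla f\|^2 < 0$.

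The next step is to exploit the Puiseux expansions of $f \circ \gamma$, $\gamma'$ and $\|\nabla f \circ \gamma\|$ in the parameter $t$: after a fractional reparametrization one has $f(\gamma(t)) \sim a\, t^p$, $\|\gamma'(t)\| \sim e\, t^{s}$ and $\|\nabla f(\gamma(t))\| \sim b\, t^q$ with $a \neq 0$, $b, e > 0$, and positive rational exponents $p,q,s$. Differentiating $f \circ \gamma$ and applying Cauchy--Schwarz gives
$$ |ap\, t^{p-1}| + o(t^{p-1}) \;=\; \bigl|(f\circ\gamma)'(t)\bigr| \;=\; \bigl|\langle \nabla f(\gamma(t)),\, \gamma'(t)\rangle\bigr| \;\leq\; \|\nabla f(\gamma(t))\|\cdot\|\gamma'(t)\|, $$
from which one extracts the lower bound $\|\nabla f(\gamma(t))\| \geq C\, t^{p-1-s}$ for some $C > 0$. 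On the other hand, the containment $\gamma \subset A_\alpha$ forces $\|\nabla f(\gamma(t))\| < |f(\gamma(t))|^\alpha \sim |a|^\alpha t^{p\alpha}$. Comparing the leading exponents yields a contradiction whenever $\alpha$ is close enough to $1$ to ensure $p\alpha > p - 1 - s$, which is always possible since $s \geq 0$.

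The principal obstacle is that the curve-selection argument, as stated, only rules out failure along a single analytic arc and for a single value of $\alpha$. Promoting this to a uniform estimate on a full neighborhood of $0$ with one exponent $\alpha_x < 1$ requires the semi-analytic-geometric fact that the set of Puiseux exponents arising along analytic arcs through $0$ is finite; this rests on the existence of a finite semi-analytic stratification near $0$ together with resolution of singularities (or, equivalently, on the finiteness theorems underpinning \L{}ojasiewicz's theory of semi-analytic sets). Given this finiteness, the supremum of the relevant exponents is strictly less than $1$, and it supplies the uniform $\alpha_x$ required by the theorem; the constant $c_x$ is then obtained by compactness on a small closed ball around $0$.
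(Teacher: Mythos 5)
The paper does not actually prove this statement: it is quoted as the classical \L{}ojasiewicz gradient inequality with citations to \cite{Loj} and \cite{KMP}, so there is no in-paper proof to compare against and your argument must stand on its own. Much of it does: the reduction to a critical point, the use of the curve selection lemma, and the single-arc computation (chain rule plus Cauchy--Schwarz giving $q \le p-1-s$, against $q \ge \alpha p$ from membership in $A_\alpha$) are correct and do isolate the mechanism that forces the exponent below $1$. (Two small points: you should take $\alpha$ rational so that $A_\alpha$ is genuinely semi-analytic before invoking curve selection, and no Puiseux reparametrization is needed since $\gamma$ is already analytic.)

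The step you yourself flag as the ``principal obstacle'' is, however, a genuine gap, and the fact you invoke to close it is false as stated. The set of orders of $f$ along analytic arcs through the base point is not finite in general: already for $f(x_1,x_2)=x_2$ and the arcs $\gamma_k(t)=(t,t^k)$ one has $p_{\gamma_k}=k$ and $s_{\gamma_k}=0$, so the quantity $1-(1+s_\gamma)/p_\gamma$, which your contradiction needs to stay bounded away from $1$, in fact accumulates at $1$ over the family of all arcs. Your argument excludes, for each fixed $\alpha$, only arcs lying in $A_\alpha$; since the arc produced by curve selection depends on $\alpha$, no contradiction results for $\alpha$ close to $1$ without a uniform bound, and ``finite stratification plus resolution of singularities'' is not an argument for one. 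The standard uniformization is different: set $\theta(t)=\min\{\|\nabla f(y)\| : y\in \bar B(x,\rho),\ |f(y)|=t\}$, note that $\theta$ is a subanalytic function of the single variable $t$ (positive for small $t>0$ by finiteness of critical values) and hence admits a Puiseux expansion $\theta(t)\sim c\,t^\beta$ at $0^+$, and apply curve selection once to a minimizing curve $y(t)$ with $f(y(t))=t$; the identity $1=\langle \nabla f(y(t)), y'(t)\rangle$ together with integrability of $\|y'\|$ near $0$ forces $\beta<1$, and this single exponent works on the whole neighborhood because $\|\nabla f(y)\|\ge\theta(|f(y)|)$ by construction. Without this (or an equivalent) reduction to a one-variable subanalytic function, your proof shows only that no individual analytic arc violates the inequality for exponents near $1$, which is strictly weaker than the theorem.
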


In fact, since $X_0$ may not be smooth, we will need the following generalization of Theorem~\ref{th-Lojasiewicz} due to 
Kurdyka and Parusinski \cite[Proposition 1]{Kurdyka}. 
 
\begin{Th}\label{th-general-Lojasiewicz}
Let {$Y$} be a (possibly singular) algebraic subset of $\r^m$. 
Let $f:\r^m \to \r$ be a semi-algebraic function.
Then for any {$x \in Y$} (not necessarily a smooth point) there is an open neighborhood {$U_x \subset Y$} (in the {classical} 
topology) and constants $c_x > 0$ and $0 < \alpha_x < 1$ such that for any smooth point $y \in U_x$ we have:
\begin{equation} \label{equ-Lojasiewicz-general}
||\nabla f(y) || \geq c_x |f(y) - f(x)|^{\alpha_x},
\end{equation}
where $\nabla f$ denotes the gradient of $f_{|Y}$ with respect to the induced metric on the smooth locus of 
{$Y$}.  
\end{Th}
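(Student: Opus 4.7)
The plan is to deduce the singular case of Theorem~\ref{th-general-Lojasiewicz} from the classical analytic case (Theorem~\ref{th-Lojasiewicz}) by a semi-algebraic curve-selection argument followed by a one-variable Puiseux computation. First I would reduce to the case $f(x) = 0$ by subtracting a constant, and dispatch the case where $x \in Y_{smooth}$ is a smooth point of $Y$ directly: on a small neighborhood $Y_{smooth}$ is a smooth manifold and, after a semi-algebraic stratification refinement making $f|_Y$ analytic on each stratum, Theorem~\ref{th-Lojasiewicz} applies. The genuinely new content is therefore the case when $x \in Y \setminus Y_{smooth}$ is a singular point of $Y$.

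Next I would argue by contradiction. If no pair $(c, \alpha)$ works at $x$, then for each positive integer $n$ the semi-algebraic set
\[
S_n = \left\{ y \in Y_{smooth} : f(y) \neq 0,\ \|\nabla f(y)\|^2 < \tfrac{1}{n}\, |f(y)|^{2 - 2/n} \right\}
\]
has $x$ in its closure. The semi-algebraic curve-selection lemma (cf.~Bochnak--Coste--Roy) then produces, for each $n$, a real-analytic semi-algebraic arc $\gamma_n : [0, \delta_n) \to Y$ with $\gamma_n(0) = x$ and $\gamma_n((0, \delta_n)) \subset S_n \subset Y_{smooth}$. Along each such arc, $f \circ \gamma_n$ is a one-variable semi-algebraic function vanishing at $0$, hence admits a Puiseux expansion $f(\gamma_n(t)) = a_n t^{p_n} + o(t^{p_n})$ with $p_n > 0$ rational and $a_n \neq 0$. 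Differentiating and combining the chain rule with Cauchy--Schwarz gives
\[
|p_n a_n|\, t^{p_n - 1}(1 + o(1)) = \left| \tfrac{d}{dt}(f \circ \gamma_n)(t) \right| \leq \|\nabla f(\gamma_n(t))\| \cdot \|\gamma_n'(t)\|,
\]
and since $\gamma_n$ is a genuine real-analytic arc, $\|\gamma_n'(t)\|$ is bounded on a small sub-interval, yielding $\|\nabla f(\gamma_n(t))\| \geq C_n\, t^{p_n - 1}$. Comparing with the defining inequality of $S_n$ forces $p_n - 1 \geq p_n(1 - 1/n)$, hence $p_n \geq n$.

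The final step is to derive a contradiction from $p_n \to \infty$. This requires a \emph{uniform} upper bound on the Puiseux exponents that can arise from real-analytic semi-algebraic arcs into $Y$ starting at $x$ along which $f$ does not vanish identically. Such a bound is a structural feature of the semi-algebraic (o-minimal) category: the arcs through $x$ form a definable family, the Puiseux exponent is a definable function on this family, and the image of a bounded definable set under a definable function is bounded. Equivalently, one may choose a semi-algebraic Whitney stratification of $Y$ compatible with $\{f = 0\}$; the finitely many strata incident to $x$ give rise to only finitely many possible orders of vanishing of $f$ along arcs into $Y$ at $x$. With $P$ such a uniform upper bound, choosing $n > P$ contradicts $p_n \geq n$, completing the proof.

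The main technical obstacle is this final uniformity statement on Puiseux exponents; it is precisely what distinguishes the Kurdyka--Parusinski generalization from a naive stratum-by-stratum application of the classical \L{}ojasiewicz inequality (which would produce constants $c, \alpha$ that could degenerate as one approaches $x$ along different directions), and is where genuine input from the theory of semi-algebraic (or subanalytic) sets is needed.
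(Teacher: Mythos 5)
The paper does not actually prove this statement: it is quoted verbatim from Kurdyka--Parusi\'nski \cite[Proposition 1]{Kurdyka}, so there is no in-paper proof to compare against and your argument has to stand on its own. Up through the estimate $p_n \geq n$ your setup is sound (the chain-rule step is legitimate because $\gamma_n'(t)$ is tangent to $Y$, so only the intrinsic gradient of $f\vert_Y$ enters). The gap is exactly where you locate the ``main technical obstacle'': the uniform bound $P$ on the Puiseux exponents of $f$ along real-analytic semi-algebraic arcs through $x$ does not exist. Take $Y=\r^2$, $f(y_1,y_2)=y_2$, $x=0$, and $\gamma_n(t)=(t,t^n)$: then $f\circ\gamma_n(t)=t^n$ has order $n$, unbounded, even though $Y$ is smooth, $f$ is linear, and a single stratum suffices. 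This also kills both of your proposed justifications: the collection of all semi-algebraic arcs through $x$ is not a definable family of bounded complexity (arcs of arbitrarily high degree occur), and a Whitney stratification does not constrain the order of vanishing of $f$ along arcs \emph{inside} a stratum, as the same example shows. So the contradiction with $p_n\to\infty$ cannot be reached this way; the only information you have about $\gamma_n$ beyond $p_n\ge n$ is that it lies in $S_n$, and you would have to exploit that, which the argument does not do.

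The standard repair is to parametrize the extremal arc by the value of $f$ rather than by an arbitrary parameter, which removes any need to bound orders of vanishing. Set
\begin{equation*}
\varphi(s) \;:=\; \inf\bigl\{\, \lVert \nabla f(y)\rVert \;:\; y \in Y_{smooth}\cap \bar B(x,r),\ |f(y)-f(x)|=s \,\bigr\},
\end{equation*}
a semi-algebraic function of one variable. Apply curve selection to (the graph of) this infimum to produce a semi-algebraic arc $s\mapsto\gamma(s)\in Y_{smooth}$ with $f(\gamma(s))=f(x)\pm s$ and $\lVert\nabla f(\gamma(s))\rVert\le 2\varphi(s)$. Differentiating $f\circ\gamma$ in $s$ gives $1\le \lVert\nabla f(\gamma(s))\rVert\,\lVert\gamma'(s)\rVert$, and the Puiseux expansion $\gamma(s)=x+bs^{q}+\cdots$ with $q>0$ gives $\lVert\gamma'(s)\rVert=O(s^{q-1})$, whence $\varphi(s)\ge c\,s^{1-q}$ with exponent $1-q<1$; this is where the Łojasiewicz exponent $\alpha_x<1$ actually comes from. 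Note that even this route is not free in the singular setting: one must show $\varphi(s)>0$ for small $s>0$, i.e.\ that the intrinsic gradient of $f\vert_{Y_{smooth}}$ does not degenerate as one approaches the singular locus of $Y$ along a level set, and that is precisely the point where Kurdyka--Parusi\'nski's stratification machinery (their $w_f$-stratifications) is needed rather than a naive application of the smooth case.
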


\begin{Rem}
Since any Riemannian metric on a relatively compact subset of $\r^n$ (i.e. a subset with 
compact closure) induces a norm which is equivalent\footnote{We say that two norms $\lvert \cdot \rvert_1$ and $\lvert \cdot \rvert_2$ are 
equivalent if there exist constants $c, C>0$ such that $c \lvert \cdot \rvert_1 < \lvert \cdot \rvert_2 < C \lvert \cdot \rvert_1$.} 
to the standard Euclidean norm, the inequalities \eqref{equ-Lojasiewicz} and
\eqref{equ-Lojasiewicz-general} hold for an arbitrary Riemannian metric on $\r^m$ with the same exponent $\alpha_x$
and possibly different constant $c_x$.
\end{Rem}

\begin{Rem}
In \cite{Kurdyka} the more general case of subanalytic functions and
sets is addressed. For our purposes,
the algebraic/semi-algebraic case stated above suffices. 
\end{Rem}

\begin{proof}[Proof of Theorem \ref{th-flow-continuous}]
Let $f := \Re(\pi)$. For $x \in X_0$, let $U_x$, $c_x > 0$ and $0 < \alpha_x < 1$ be as in Theorem \ref{th-general-Lojasiewicz}
{(applied to the family $\X$)}. 
Since $X_0$ is compact, it can be covered with finitely many such open sets $U_{x_1}, \ldots, U_{x_s}$. Let
$$U := \left( \bigcup_{i=1}^s U_{x_i} \right) \cap \{y \in \X \mid |f(y)| < \epsilon\} {\subseteq \X}$$
$$c := \min\{c_{x_1}, \ldots, c_{x_s}\},$$
$$\alpha := \max\{\alpha_1, \ldots, \alpha_s\}.$$
Clearly $X_0 \subset U$. {Moreover, from~\eqref{equ-Lojasiewicz-general} 
it follows that 
for any point $y \in U$ we have:} 
\begin{equation} \label{eq-Loj-ineq}
|| \nabla f(y) || \geq c |f(y)|^{\alpha}. 
\end{equation}
Next we claim that there exists
$\rho > 0$ such that $X_t \subset U$ for all $0 < t < \rho$. To see
this, suppose for a contradiction that 
there exists a sequence $(x_i)_{i \in \n} \in \X$ such that for all $i$ we
have $x_i \notin U$, $0 < \pi(x_i) < 1$, and 
$\lim_{i \to \infty} \pi(x_i) = 0$. {Since $\pi$ is proper, the set $\pi^{-1}([0,1]) \subseteq \X$ is compact, from which}
it follows that the sequence $(x_i)$ has a limit point $x \in \X$. 
Since $x_i \notin U$ for all $i$ and $U$ is open, we know $x \notin
U$. On the other hand, 
by continuity $\pi(x) = 0$ and hence $x \in X_0 \subset U$,
contradiction. 

{Now let $\rho>0$ be as above. From \eqref{eq-Loj-ineq} we conclude that for any $0 < t < \rho$ and $y \in X_t$ we have:}
\begin{equation}\label{eq-adjusted-Loj-ineq}
|| \nabla f(y) ||^{-1} \leq (1/c) |f(y)|^{-\alpha}.
\end{equation}
{Note that $f(y) = \Re(t) = t$ is nonzero and hence division by $f(y)$ is allowed.}

Also for any $\epsilon-\rho < t < \epsilon$ and
for any $y \in X_\epsilon$ we have {that $\phi_t(y)$ is
defined (by an argument similar to the proof of
Lemma~\ref{lem-flow-grad-Hamiltonian}) and that} $$\phi_t(y) \in X_{\epsilon-t} \subset U.$$
Let $x \in X_\epsilon$. Then for any $t_0, t_1$ with $\epsilon-\rho < t_0 < t_1 <
\epsilon$, we have 
\begin{eqnarray*}
\left\lVert \phi_{t_1}(x) - \phi_{t_0}(x)  \right\rVert &=& \left\lVert \int_{t_0}^{t_1} \frac{d}{dt} \phi_t(x) dt \right\rVert \cr
&=& \left\lVert \int_{t_0}^{t_1} V( \phi_t(x)) dt \right\rVert \cr
&\leq& \int_{t_0}^{t_1} \left\lVert V(\phi_t(x)) \right\rVert dt \cr 
\end{eqnarray*}
In addition, by~\eqref{eq-adjusted-Loj-ineq} we have 
$$|| \phi_{t_1}(x) - \phi_{t_0}(x) || \leq \int_{t_0}^{t_1} (1/c)|f(\phi_t(x))|^{-\alpha} dt$$
and since $x \in X_\epsilon$, by Proposition~\ref{prop-grad-Hamiltonian} we have $$f(\phi_t(x)) = \epsilon-t.$$ 
Putting this together we obtain 
\begin{eqnarray*}
\left\lVert \phi_{t_1}(x) - \phi_{t_0}(x) \right\rVert &\leq& \frac{1}{c} \int_{t_0}^{t_1} (\epsilon-t)^{-\alpha} dt \cr
&\leq& \frac{1}{c(1-\alpha)}((\epsilon-t_1)^{1-\alpha} - (\epsilon-t_0)^{1-\alpha}). \cr
\end{eqnarray*}
Since $1-\alpha > 0$ by assumption, we know $\lim_{t \to \epsilon}
(\epsilon-t)^{1-\alpha} = 0$. Therefore for any
$\varepsilon'>0$, 
there exists $\delta > 0$ such that for any $x \in X_1$ and any $t_0,
t_1$ with $1-\delta < t_0 < t_1 < 1$, we have
$$|| \phi_{t_1}(x) - \phi_{t_0}(x) || \leq \varepsilon'.$$ 
It follows that for any $x \in X_\epsilon$, $\lim_{t \to \epsilon} \phi_{t}(x)$ exists. 
Let $\phi(x)$ denote this limit. By continuity,
if the original flow $\phi_\epsilon$ was already defined at $x$, then
$\phi_\epsilon(x) = \phi(x)$. Thus the map $\phi$ is an extension of $\phi_\epsilon$
to all of $X_\epsilon$, and by construction takes $X_\epsilon$ to $X_0$. 

It remains to 
show that $\phi: X_\epsilon \to X_0$ is continuous. 
Let $x \in X_\epsilon$. We need to show that for any $\varepsilon'' > 0$ there exists $\delta > 0$ such that if $y \in X_\epsilon$ and 
$||y - x|| < \delta$ then $||\phi(y) - \phi(x)|| < \varepsilon''$. 
By the above, we can choose $t \in \r$ with $0 < t < \epsilon$ such that for any $y \in X_\epsilon$ 
we have $$|| \phi_t(y) - \phi(y) || < \varepsilon''/3.$$
Since the map $y \mapsto \phi_t(y)$ is continuous, we can find $\delta > 0$ such that 
for any $y \in X_\epsilon$, if $|| x - y || < \delta$ then {$|| \phi_t(x) - \phi_t(y) || < \varepsilon'' / 3$.} 
Now, if $|| x - y || < \delta$, we have:
\begin{eqnarray*}
||\phi(x) - \phi(y)|| &\leq& || \phi(x) - \phi_t(x)|| + ||\phi_t(x) - \phi_t(y)|| + ||\phi_t(y) - \phi(y)|| \cr
&\leq& \varepsilon''/3 + \varepsilon''/3 + \varepsilon''/3 \cr
&\leq& \varepsilon'' \cr
\end{eqnarray*}
as desired. 
\end{proof}

\section{Construction of an integrable system from a toric degeneration} \label{sec-int-system}

With these preliminaries in place, we can now prove the main result of
{Part 1 of this manuscript, which constructs an integrable system 
in the sense of Definition~\ref{def-int-system} from a toric degeneration satisfying certain properties (described below)}. 

{We first recall the definition of a toric
  degeneration.} Let $X$ be an $n$-dimensional projective variety. Let $\pi: \X \to \c$ be a family of varieties. 
\begin{Def} \label{def-toric-degen}
We call $\pi: \X \to \c$ a {\bf toric degeneration of $X$} if the following hold:
\begin{itemize}
\item[(1)] $\pi: \X \to \c$ is a flat family of irreducible varieties. In particular, each fiber $X_t := \pi^{-1}(t)$ is reduced regarded as a scheme. 
\item[(2)] The family $\X$ is trivial over $\c^*$, i.e. there is a fiber-preserving isomorphism {of varieties}
$\rho: X \times \c^* \to \X \setminus X_0$. In particular, for any $t
\in \c^*$, {the restriction $\rho_t$ of $\rho$ to $X
  \times \{t\}$ yields an isomorphism of the varieties $X \cong X
  \times \{t\}$ and the 
  fiber} $X_t$. 
\item[(3)] The fiber $X_0$ is a toric variety {with respect to an action of $\T = (\c^*)^n$}. 
\end{itemize} 
\end{Def}
For the rest of our discussion we fix once and for
all the isomorphism $\rho: X
\times \c^* \to \X \setminus X_0$ in item (2) above. Consider the
diagonal action of $\c^*$ on $X \times \c^*$, where $\c^*$ acts trivially on the 
first factor and standardly
on the second factor. The identification $\rho$ then
induces a corresponding 
$\c^*$-action on $\X \setminus X_0$ which preserves the fibers of $\pi$. 

Next we state some hypotheses, listed in items (a)-(d) below, which we place on our toric degenerations. These hypotheses are natural in the 
sense that many of the known examples satisfy them (cf. Section~\ref{sec-examples} and Remark~\ref{remark-natural} below). 
For the following, suppose that the projective variety $X$ is equipped with a K\"ahler form $\omega$;
the integrable system constructed in Theorem~\ref{th-A} is with respect to this $\omega$.

Let $\pi: \X \to \c$ be a toric degeneration of $X$. We assume that:
\begin{itemize}
\item[(a)] The family $\X$ is smooth away from the singular locus of
the zero fiber $X_0$. 
\item[(b)] The family $\X$ is embedded in $\p^N \times \c$ as an algebraic subvariety, for some
projective space $\p^N$, such that: 
\begin{itemize} 
\item the map 
$\pi$ is the restriction {to $\X$ of the usual 
  projection of $\p^N \times \c$ to its second factor}, and 
\item the action of $\T$ on $X_0$ extends to a linear
    action on $\p^N$. 
\end{itemize}
\end{itemize}

{We also place some additional hypotheses on the
  K\"ahler form $\omega$ on $X$ as follows.} 
{Suppose $\Omega$ is a constant multiple of a Fubini-Study K\"ahler
form on $\p^N$. Let $z$ denote the complex variable on $\c$ so that
$\omega_{std} = \frac{i}{2} dz \wedge d\bar{z}$ is the standard
K\"ahler form on $\c$. We equip $\p^N \times \c$ with the product
K\"ahler structure $\Omega \times \omega_{std}$. }
{Assuming that $\X$ satisfies the conditions (a) and (b) above, so 
in particular $\X$ is embedded in $\p^N \times \c$, let}
$\tilde{\omega}$ (respectively $\omega_t$) denote the restriction of this product structure to the
smooth locus of the family $\X$ (respectively the fiber $X_t$).  
In this setting we also assume: 
\begin{itemize}
\item[(c)] {The map $\rho_1: X \to X_1$ is an
    isomorphism of K\"ahler manifolds, i.e. $\rho_1^*(\omega_1) =
    \omega$. }
\item[(d)] {Let $T=(S^1)^n$ denote the compact subtorus of $\T$.} The K\"ahler form $\Omega$ on $\p^N$ is $T$-invariant; {in particular the restriction $\omega_0$ to the toric variety $X_0$ is also a $T$-invariant 
K\"ahler form.} 
\end{itemize}
We can now state and prove one of the main results of this
manuscript. 


\begin{Th} \label{th-A} 
Let $X$ be a smooth $n$-dimensional projective variety and let $\omega$ be a K\"ahler structure on $X$.
Suppose that there exists a toric degeneration $\pi: \X \to \c$ of $X$ satisfying
the above conditions (a)-(d). Then:
\begin{itemize}
\item[(1)] There exists a surjective continuous map $\phi: X \to X_0$ which is a 
symplectomorphism restricted to a dense open subset $U \subset X$.
\item[(2)] There exists a completely integrable system $\mu = (F_1, \ldots, F_n)$ on $(X, \omega)$, in the sense of 
Definition \ref{def-int-system}, such that its moment image $\Delta$ coincides with the moment image of $(X_0, \omega_0)$ (which is a polytope). 
\item[(3)] Let $U \subset X$ be the open dense subset of $X$ from (1). Then 
the integrable system generates a torus action on $U$ and the inverse image {$\mu^{-1}(\Delta^\circ)$ of
the interior of $\Delta$ under the moment map
$\mu: X \to \r^n$ of the integrable system}
lies in the open subset $U$.
\end{itemize}
\end{Th}

\begin{Rem}
For part (1) in Theorem \ref{th-A}, 
we do not need to assume that $X_0$ is a toric variety and that the K\"ahler form $\omega_0$ on it is torus invariant.
\end{Rem}

\begin{Rem}\label{remark-natural}
The conditions (a)-(d) above are natural in the sense
that they are satisfied in many of the known examples in the
literature of constructions of toric degenerations. 
Indeed, the main purpose of Part 2 of this manuscript is to show that,
in the general construction of toric degenerations in the context of
the theory of Newton-Okounkov bodies, all the above conditions (a)-(d)
are satisfied. These degenerations come from degenerating polynomials
to their leading terms, often called a Gr\"obner or SAGBI degeneration. 
We also discuss concrete examples in Section~\ref{sec-examples}.
\end{Rem}

\begin{proof}[Proof of Theorem~\ref{th-A}]
Since the family is trivial away from $0$ we know that there are no critical values of $\pi$ on the interval $(0, +\infty)$. 
Taking $\epsilon =1$, Theorem~\ref{th-flow-continuous} tells us that the flow $\phi_{1}: U_1 \to U_0$ extends to 
a continuous function $\phi: X_1 \to X_0$. Hence we have a sequence of 
maps 
$$\xymatrix{
X \ar[r]^{\rho_1} & X_1 \ar[r]^{\phi} & X_0\\
}$$
where $\rho_1$ is a symplectomorphism and $\phi$ is continuous and its 
restriction to an open dense subset $U_1$ is
a symplectomorphism. This proves part (1). Note that $\phi$ is surjective.

Let $\mu_0 = (H_1, \ldots, H_n)$ be the moment map for the $T$-action on the toric variety $X_0$ with respect to the
K\"ahler form $\omega_0$, which is a constant multiple of a Fubini-Study K\"ahler form on the 
projective space $\p^N$ (Section \ref{sec-prelim-part1} and Remark \ref{rem-toric-int-system-multiple-FS}). Let $\Delta$ 
be the moment image, that is the image of $\mu_0$. 
Define a collection of functions $\{F_1, \ldots, F_n\}$ on $X$ by
$$F_i =  H_i \circ \phi \circ \rho_1,  \textup{ for all $1 \leq i \leq n$} $$
We claim that the $\{F_1, \ldots, F_n\}$ form a completely integrable system on $X$ in the sense of Definition \ref{def-int-system}. 
First, by construction the functions $\{H_1, \ldots, H_n\}$ pairwise Poisson-commute with respect to $\omega_0$ and 
their differentials are linearly independent on a dense open set $U_0$
in $X_0$. Since $\phi \circ \rho_1$ is a symplectomorphism from
$\rho_1^{-1}(U_1)$ to $U_0$, 
it follows that the $\{F_1, \ldots, F_n\}$ also pairwise Poisson-commute with respect to
$\omega$ {and their differentials are linearly independent} on the dense open subset $\rho_1^{-1}(U_1)$ of $X$. 
The continuity of the $F_i$ follows from the continuity of 
$\phi \circ \rho_1$ on all of $X$ and the continuity of the $H_i$. Finally, since $X$ is compact, 
the image of the continuous map 
$\phi \circ \rho_1: X \to X_0$  is closed in $X_0$. On the other hand, this image contains the dense 
open set $U_0$ and hence 
$\phi \circ \rho_1$ is surjective. It follows that the image of $(F_1, \ldots, F_n): X_1 \to \r^n$ is the same as 
the image of $(H_1, \ldots, H_n):X_0 \to \r^n$, which is the polytope
$\Delta = \Delta(R)$, as desired. This finishes the proof of (2). To prove (3) note that 
the gradient-Hamiltonian flow is undefined exactly at the singular points or the critical points of $f = \Re(\pi)$. By Corollary \ref{cor-Uzero-open-dense} we know that
there are no critical points of $f$ on the smooth locus of $X_0$. The first assertion in (3) follows by noting that the set of singular points 
of $X_0$ is $T$-invariant. To prove the second assertion let $\Delta^\circ$ be the interior of the polytope $\Delta$. We know the the inverse image 
$\mu_0^{-1}(\Delta^\circ)$ is the open $\T$-orbit (isomorphic to $\T$ itself). Note that under the assumption that there are no critical 
points, the open $\T$-orbit in $X_0$ (hence $\mu_0^{-1}(\Delta^\circ)$) 
is contained in $U_0$. Now since the continuous map 
$\phi: X_1 \to X_0$ is a homeomorphism between $U_1$ and $U_0$, it 
should map $\partial U_1$ to $\partial U_0$. That is, the inverse image of $U_0$ under $\phi$ is $U_1$. This proves the assertion.
\end{proof}


\begin{Rem} \label{rem-union-of-toric-var} 
As mentioned in the introduction, the same proof goes through to show that Theorem \ref{th-A}  
still holds if the special fiber $X_0$ is a union of toric varieties and hence possibly 
reducible as a variety. In this case, the moment image is a union of convex polytopes. 
\end{Rem}

\section{Integrable systems and (GIT and symplectic) quotients} \label{sec-GIT-part1}

The goal of this section is to show 
that the construction of an integrable system using a toric degeneration in Sections~\ref{sec-int-system} 
can be made compatible with the presence of a torus action. To accomplish this goal, however, the toric
degeneration $\X$ must be compatible with the torus action in the sense which will be explained below.

We use the setting and notation of Section \ref{sec-int-system}. 
As before let $\T = (\c^*)^n$ denote the $n$-dimensional algebraic
torus with lattice of characters $\z^n$.  Now suppose an algebraic subtorus $\HH \subset \T$ of dimension $m$
acts on the variety $X$. We denote the maximal compact subgroup $T \cap \HH$ of $\HH$ by $H$.

By assumption the action of $\T$ on $X_0$ lifts to an action of $\T$ on $\p^N \times \{0\}$ induced by a linear action of $\T$ on $\c^{N+1}$.
We let $\HH$ act on $\p^N \times \c$ where it acts trivially on $\c$ and acts on $\p^N$ by restricting the action of $\T$ to $\HH$.
{Hence in particular the $\HH$-action on $X_0$ is the restriction of the $\T$-action on $X_0$.}

We assume that the following compatibility conditions hold for the action of $\HH$ and the family $\X \subset \p^N \times \c$.
\begin{itemize}
\item[(i)] The K\"ahler form $\Omega$ on $\p^N$ is invariant under the action of the real torus $H$.

\item[(ii)] The family $\X$ is an $\HH$-invariant subvariety of $\p^N \times \c$ and the action of $\HH$ on $X_1$ coincides with the 
action of $\HH$ on $X$ via the isomorphism $\rho_1: X_1 \to X$. In
other words, the $\HH$-action on $\X$ commutes with the $\c^*$-action
on $\X$. 

\end{itemize}

{Let $\X^{ss} \subset \X$ denote the set of semistable points of $\X$ with respect to the $\HH$-action, 
i.e. $(x,t) \in \X$ for which there exists an $\HH$-invariant polynomial $f$ on $\c^{N+1} \times \c$ and homogeneous on the 
$\c^{N+1}$ factor such that $f(x, t) \neq 0$. 
Recall that $\HH$ acts on $\c^{N+1} \times \c$ by acting linearly on the first factor and acting trivially on the second factor}. 
Let $p: \X^{ss} \to \X' = \X^{ss}/\HH$ be the quotient map. Then since the action of $\HH$ preserves fibers, one verifies that there is a 
map $\pi': \X' \to \c$ such that the diagram 
\begin{equation} \label{equ-comm-diag}
\xymatrix{
\X^{ss} \ar[rr]^{p} \ar[rd]_\pi & & \X' \ar[ld]^{\pi'} \\
& \c &\\
}
\end{equation}
commutes. Moreover, the general fibers of $\X$ and $\X'$ are $X$ and $X' = X // \HH$,
respectively, and the special fibers are $X_0$ and $X'_0 = X_0 // \HH$,
respectively.  
 
Let $\tilde{\mu}_H: \X \to \Lie(H)^*$ denote the moment map of the $H$-action
on the family $\X$ with respect to $\tilde{\omega}$. 
Moreover, we assume that
$\tilde{\mu}_H$ is proper on the set of smooth points on $\X$, $0$ is a regular
value for $\tilde{\mu}_H$, and $H$ acts with finite stabilizer on $\tilde{\mu}_H^{-1}(0)$. 
Then by the Kempf-Ness theorem, $\X'$ can also be identified with the
symplectic quotient $\tilde{\mu}_H^{-1}(0)/H$. 

The K\"ahler structure on the quotient $\X' \cong \tilde{\mu}_H^{-1}(0)/H$ can
be described explicitly as follows. Let $(\tilde{\omega}, \tilde{g})$
denote the K\"ahler (symplectic) form and the corresponding Riemannian
metric on $\X_{smooth}$, and let $(\tilde{\omega}', \tilde{g}')$ denote the
induced K\"ahler structure on $\X'_{smooth}$. The form $\tilde{\omega}'$ is
defined by the relation 
\begin{equation}\label{eq:def quotient symplectic form} 
p^* \tilde{\omega}' = \iota^* \tilde{\omega}
\end{equation}
where $p: \tilde{\mu}_H^{-1}(0) \to \tilde{\mu}_H^{-1}(0)/H \cong \X'$ is the quotient
map and $\iota: \tilde{\mu}_H^{-1}(0) \hookrightarrow\X$ denotes the inclusion. The
quotient Riemannian metric $\tilde{g}'$ is defined by the formula 
\begin{equation}
\label{eq:definition quotient metric}
\tilde{g}_x(v, w) = \tilde{g}'_{p(x)}(dp_x(v), dp_x(w))
\end{equation}
where $v$ and $w$ are required to lie in the orthogonal complement to
the tangent space $T_x(H \cdot x)$ of the $H$-orbit $H \cdot x$
through $x$. 

Before proving the technical proposition leading to our
Theorem~\ref{th-int-system-GIT-part1}, we recall a simple fact about moment
maps. 
\begin{Lem} \label{lem-moment-Hamiltonian-flow}
Let $K$ be a compact Lie group. Suppose $(M, \omega)$ is a Hamiltonian
$K$-space with moment map $\mu_K: M \to \Lie(K)^*$. Let $f$ be a
smooth function on $M$ which is $K$-invariant. Then the Hamiltonian
flow of $f$ preserves $\mu_K$, i.e., $\mu_K$ is constant along the
Hamiltonian vector field of $f$. 
\end{Lem}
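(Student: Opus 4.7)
The plan is to verify, for each $X \in \Lie(K)$, that the pairing $\langle \mu_K, X\rangle: M \to \r$ is constant along the Hamiltonian flow of $f$. Since $\mu_K$ is $\Lie(K)^*$-valued, this is equivalent to $\mu_K$ being preserved by the flow.

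First I would translate the claim into the vanishing of the Lie derivative $\xi_f \langle \mu_K, X\rangle$, where $\xi_f$ denotes the Hamiltonian vector field of $f$ as in \eqref{equ-def-xi_H}. The key identity is that the defining property of the moment map says $\omega(X^\#, \cdot) = d\langle \mu_K, X\rangle(\cdot)$, where $X^\#$ denotes the fundamental vector field on $M$ generated by $X \in \Lie(K)$. Evaluating this at $\xi_f$ gives
\[
\xi_f \langle \mu_K, X\rangle = d\langle \mu_K, X\rangle(\xi_f) = \omega(X^\#, \xi_f).
\]

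Next I would use skew-symmetry of $\omega$ and the defining equation \eqref{equ-def-xi_H} of $\xi_f$ to rewrite
\[
\omega(X^\#, \xi_f) = -\omega(\xi_f, X^\#) = -df(X^\#) = -X^\# \cdot f.
\]
Finally, the $K$-invariance of $f$ implies $X^\# \cdot f = 0$ for every $X \in \Lie(K)$, so $\xi_f \langle \mu_K, X\rangle \equiv 0$. Since this holds for every $X \in \Lie(K)$, the moment map $\mu_K$ is constant along the Hamiltonian flow of $f$, as claimed.

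There is no significant obstacle here; the statement is a standard consequence of the duality between the Poisson brackets $\{f, \langle \mu_K, X\rangle\}$ and the infinitesimal $K$-action, and the whole proof is essentially a single line once the definitions are unpacked.
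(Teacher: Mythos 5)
Your proof is correct. The paper states this lemma as a recalled ``simple fact'' and does not supply a proof, so there is nothing to compare against; your argument is the standard one, obtained by pairing $\mu_K$ with an arbitrary $X \in \Lie(K)$, using the defining property of the moment map and the skew-symmetry of $\omega$ to reduce the Lie derivative $\xi_f\langle\mu_K, X\rangle$ to $\pm X^{\#}\cdot f$, and then invoking $K$-invariance of $f$. (The overall sign depends on one's convention for the moment map, but either way the quantity vanishes.)
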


Let $\phi_t$ and $\phi'_t$ denote the gradient-Hamiltonian flows on $\X$ and $\X'$ respectively.
Note that an immediate corollary of the above lemma is that the level set
$\tilde{\mu}_H^{-1}(0)$ is invariant under the gradient-Hamiltonian flow
$\phi_t$, since $\pi$ (hence $\Re(\pi)$) is $H$-invariant. Thus it
makes sense to talk about the gradient-Hamiltonian flow restricted to
$\tilde{\mu}_H^{-1}(0)$. 
The following proposition states that, in our setting, the
gradient-Hamiltonian flow is compatible with taking GIT (symplectic)
quotient. 

\begin{Prop} \label{prop-flow-GIT}
The quotient morphism $p: \tilde{\mu}_H^{-1}(0) \to \X' \cong \tilde{\mu}_H^{-1}(0)/H$
commutes with the gradient-Hamiltonian flows.
More precisely, for any $x \in \tilde{\mu}_H^{-1}(0)$ and any $t > 0$ such
that 
both $\phi_t(x)$ and $\phi'_t(p(x))$ are defined, we have
$$p(\phi_t(x)) = \phi'_t(p(x)).$$
\end{Prop}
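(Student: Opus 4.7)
The plan is to show that $p$ intertwines the gradient-Hamiltonian vector fields themselves (where both are defined), from which the equality of flows follows by uniqueness of solutions of ODEs. Concretely, I will prove that for $x \in \tilde{\mu}_H^{-1}(0)$ at which $V_\pi(x)$ is defined, we have
\[
dp_x(V_\pi(x)) = V_{\pi'}(p(x)).
\]
There are three things to check: (i) $V_\pi$ is tangent to the level set $\tilde{\mu}_H^{-1}(0)$, so that the formula makes sense; (ii) $\nabla(\Re(\pi))(x)$ is \emph{horizontal}, i.e., orthogonal to $T_x(H\cdot x)$, so that $dp_x$ acts on it as an isometry; and (iii) the horizontal gradient descends to the gradient of $\Re(\pi')$ on the quotient.

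For (i), recall from~\eqref{gradient Re pi and Hamiltonian Im pi} that $\nabla(\Re(\pi)) = -\xi_{\Im(\pi)}$. Since $\HH$ (and hence $H$) acts trivially on the $\c$-factor, the function $\pi$ (and thus $\Im(\pi)$) is $H$-invariant. Lemma~\ref{lem-moment-Hamiltonian-flow} then implies that $\xi_{\Im(\pi)}$, and therefore $\nabla(\Re(\pi))$ and $V_\pi$, is tangent to $\tilde{\mu}_H^{-1}(0)$. For (ii), if $\xi \in \Lie(H)$ and $\xi_\X$ is the induced fundamental vector field, then $H$-invariance of $\Re(\pi)$ gives $d\Re(\pi)(\xi_\X) = 0$, so $\tilde{g}(\nabla\Re(\pi), \xi_\X) = 0$. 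Thus $\nabla\Re(\pi)$ is orthogonal to the $H$-orbit directions.

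For (iii), observe that the commutative diagram~\eqref{equ-comm-diag} gives $\pi = \pi' \circ p$ on $\tilde{\mu}_H^{-1}(0)$, hence $\Re(\pi) = \Re(\pi') \circ p$ there. For any tangent vector $w \in T_{p(x)}\X'$, lift it to the unique horizontal vector $\tilde{w} \in T_x\tilde{\mu}_H^{-1}(0)$ (orthogonal to $T_x(H\cdot x)$) with $dp_x(\tilde{w}) = w$. Then, using (ii) and the definition~\eqref{eq:definition quotient metric} of $\tilde{g}'$ on horizontal vectors,
\[
\tilde{g}'_{p(x)}\bigl(dp_x(\nabla\Re(\pi)), w\bigr) = \tilde{g}_x\bigl(\nabla\Re(\pi), \tilde{w}\bigr) = d\Re(\pi)(\tilde{w}) = d\Re(\pi')(w).
\]
Since this holds for all $w$, we conclude $dp_x(\nabla\Re(\pi)(x)) = \nabla\Re(\pi')(p(x))$. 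Taking $w = dp_x(\nabla\Re(\pi))$ in the same identity also yields equality of norms $\|\nabla\Re(\pi)(x)\|_{\tilde{g}} = \|\nabla\Re(\pi')(p(x))\|_{\tilde{g}'}$. Combining with the normalization in~\eqref{def-grad-Hamiltonian} gives $dp_x(V_\pi(x)) = V_{\pi'}(p(x))$.

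Finally, the curve $t \mapsto p(\phi_t(x))$ starts at $p(x)$, and its velocity is $dp(V_\pi(\phi_t(x))) = V_{\pi'}(p(\phi_t(x)))$ by the above, so it is an integral curve of $V_{\pi'}$ through $p(x)$. By uniqueness it coincides with $t \mapsto \phi'_t(p(x))$ wherever both are defined, which is the desired identity. The main subtlety is step (iii): matching the two gradients requires \emph{both} the horizontality of $\nabla\Re(\pi)$ (coming from $H$-invariance) and the Kempf-Ness description~\eqref{eq:definition quotient metric} of the quotient metric—it is essential that the quotient be computed at the zero level set, since only there is the quotient metric available via symplectic reduction.
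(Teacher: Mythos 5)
Your proof is correct and essentially mirrors the paper's: both reduce to the pointwise identity $dp_x(V_\pi) = V_{\pi'}(p(x))$, verify horizontality of $\nabla\Re(\pi)$ from $H$-invariance, and use the quotient metric formula~\eqref{eq:definition quotient metric} to match norms. The one modest difference is that you establish $dp_x(\nabla\Re(\pi)(x)) = \nabla\Re(\pi')(p(x))$ purely on the Riemannian side (via horizontal lifts and the quotient metric), whereas the paper first passes to the Hamiltonian vector field $\xi_{\Im(\pi)}$ and derives $dp_x(\xi_h) = \xi_{h'}(p(x))$ directly from the symplectic quotient relation $p^*\tilde\omega' = \iota^*\tilde\omega$; these are equivalent via the Cauchy--Riemann identity $\nabla\Re(\pi) = -\xi_{\Im(\pi)}$, and neither route is meaningfully shorter. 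You also correctly spell out the tangency of $V_\pi$ to $\tilde\mu_H^{-1}(0)$ via Lemma~\ref{lem-moment-Hamiltonian-flow} and close the argument with uniqueness of ODE solutions, both of which the paper treats more tersely.
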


\begin{proof}
It is enough to show that, whenever defined
we have 
\begin{equation} \label{equ-V-V'}
dp_x(V_x) = V'_{p(x)}
\end{equation}
for $x \in \tilde{\mu}_H^{-1}(0)$. Here 
$V$ and $V'$ denote the gradient-Hamiltonian vector fields of
$\Re(\pi)$ and $\Re(\pi')$, respectively. Recall that
by~\eqref{gradient Re pi and Hamiltonian Im pi} we have 
\begin{equation*}
\begin{split}
V_x = \xi_h(x) / \lVert \xi_h(x) \rVert \\
V'_{x'} = \xi_{h'} (x') / \lVert \xi_{h'}(x') \rVert
\end{split}
\end{equation*}
for $x \in \X, x' \in \X'$, where $h = \Im(\pi)$, $h'=\Im(\pi')$, $\xi_{h}$, $\xi_{h'}$ their Hamiltonian vector fields
respectively, and 
$\lVert \cdot \rVert$ denotes the norm with respect to the metrics $\tilde{g}$ and
$\tilde{g}'$ respectively. 
Let $x \in \tilde{\mu}_H^{-1}(0) \subset \X^{ss}$.  
By~\eqref{eq:def quotient symplectic form} we have $p^*(\tilde{\omega}'_{p(x)})
= \tilde{\omega}_{|{\tilde{\mu}_H^{-1}(0)}}$. Since $h = h' \circ p$ on
$\tilde{\mu}_H^{-1}(0)$ by \eqref{equ-comm-diag} we see that
\begin{equation} \label{equ-xi-h-h'}
dp_x(\xi_h) = \xi_{h'}(p(x)).  
\end{equation}
On the other hand, from the
fact that $\pi$ (and hence $f=\Re(\pi)$) is $H$-invariant, we also
know 
that the gradient $\nabla f(x)$ is orthogonal to the orbit $T_x(H
\cdot x)$ for any $x \in \tilde{\mu}_H^{-1}(0)$. 
Recalling that $V_x$ is also equal to $\nabla
f(x)/\lVert \nabla f(x) \rVert$, applying~\eqref{eq:definition quotient metric}
to $v = w = \xi_h(x)$ we can conclude
\begin{equation} \label{equ-norm-h-h'}
|| \xi_h(x) || = || \xi_{h'} (p(x))||.
\end{equation}
The result follows. 
\end{proof}
{
Let $\mu_H: X \to \Lie(H)^*$ denote the moment map of the $H$-action on $X$ with moment image $\Delta_H$ 
and let $\mu: X \to \r^n$ be the integrable system constructed in Theorem \ref{th-A} with moment image $\Delta$.
Then $\tilde{\mu}_{H, 1}: X_1 \to \Lie(H)^*$ can be identified with $\mu_H$ via the isomorphism $\rho_1: X \to X_1$.  
Let $\mu_{0}$ and $\mu_{H, 0}$ denote the moment maps of $T$-action and $H$-action on the toric variety $X_0$ with moment images $\Delta$ and $\Delta_H$ respectively (Section \ref{sec-prelim-part1}). Then $\mu_{H, 0} = \tilde{\mu}_{H|X_0}$.
Let $i^*: \Lie(T)^* \to \Lie(H)^*$ denote the dual linear map to the inclusion 
$i: \Lie(H) \hookrightarrow \Lie(T)$. Let $X'_0 := \pi^{-1}(0)$ be the fiber over $0$ of the family $\X'$. It can be regarded as the 
GIT quotient of $X_0$ with respect to $\HH$. It is a possibly singular orbifold. By the Kempf-Ness theorem (for orbifolds) we can 
identify $X'_0$ with the symplectic quotient $\mu_{H, 0}^{-1}(0) / H$. Note that $\mu_0$ is $T$-invariant and hence $H$-invariant and thus 
gives a map on $X'_0 = \mu_{H, 0}^{-1}(0) / H$ which we denote by $\mu'_0$. Let $\Delta'$ be 
the intersection of $\Delta$ with the linear subspace $\ker(i^*)$. We have a commutative diagram:
\begin{equation} \label{equ-comm-diag-GIT1}
\xymatrix{
& \Delta_H \ar@{}[r]|{\subset} & \Lie(H)^*\\
X_0 \ar[r]^{\mu_0} \ar[ru]^{\mu_{H, 0}} & \Delta \ar@{}[r]|{\subset} \ar[u]_{i^*} & \Lie(T)^* \\
\mu_{H, 0}^{-1}(0) \ar@{^{(}->}[u]^i \ar[d]_p & \\
X'_0 \ar[r]^{\mu'_0} & \Delta' \ar@{^{(}->}[uu] &\\
} 
\end{equation}
On the other hand,  
there are open dense subsets $U' \subset X'$ and $U_0' \subset X_0'$ and a symplectomorphism $\phi': U' \to U_0'$ given by the 
gradient-Hamiltonian flow. By Proposition \ref{prop-flow-GIT} we know that $\phi' \circ p = p \circ \phi$. Also $\phi: U \to U_0$ is $H$-equivariant and hence
that its continuous extension $\phi: X \to X_0$ is also $H$-equivariant. From these one verifies that $\phi'$ extends to a continuous map
$\phi': X' \to X_0'$ and we have the commutative diagram:
\begin{equation} \label{equ-comm-diag-GIT2}
\xymatrix{
& \Delta_H\\
X \ar[r]^{\phi} \ar[ru]^{\mu_{H}} & X_0 \ar[u]_{i^*}\\
\mu_{H}^{-1}(0) \ar[r]^{\phi} \ar@{^{(}->}[u] \ar[d]_p & \mu_{H, 0}^{-1}(0) \ar@{^{(}->}[u] \ar[d]_p\\
X' \ar[r]^{\phi'} & X'_0\\
} 
\end{equation}
Putting \eqref{equ-comm-diag-GIT1} and \eqref{equ-comm-diag-GIT2} together we obtain the following.
\begin{Th}\label{th-int-system-GIT-part1}
The following diagram is commutative:
\begin{equation} \label{equ-comm-diag-GIT3}
\xymatrix{
& \Delta_H \ar@{}[r]|{\subset} & \Lie(H)^*\\
X \ar[r]^{\mu} \ar[ru]^{\mu_H} & \Delta \ar@{}[r]|{\subset} \ar[u]_{i^*} & \Lie(T)^* \\
\mu_H^{-1}(0) \ar@{^{(}->}[u]^i \ar[d]_p & \\
X' \ar[r]^{\mu'} & \Delta' \ar@{^{(}->}[uu] &\\
}
\end{equation}
\end{Th}

}
\part{Toric degenerations from valuations and Okounkov bodies} \label{part-2}

In Part 2 of this paper, we consider toric degenerations arising from
the theory of Newton-Okounkov bodies. We will show that, for this class
of toric degenerations, all of the technical assumptions we require on
the toric degeneration in
Part 1 (in order to construct integrable systems) are satisfied. 

\section{Preliminaries for Part 2} \label{sec-prelim-part2}

Let $X$ be a projective variety of dimension $n$ over $\c$ equipped
with a very ample line bundle $\lb$.  Let $L
:= H^0(X, \lb)$ denote the space of global sections of $\lb$; it is a
finite dimensional vector space over $\c$. The line bundle $\lb$ gives
rise to the \emph{Kodaira map $\Phi_L$ of $L$}, 
from $X$ to the projective space $\p(L^*)$, 
defined as follows: the image 
$\Phi_L(x)$ of a point $x \in X$ is the point in $\p(L^*)$ 
corresponding to the hyperplane
$$H_x = \{ f \in L \mid f(x) = 0 \} \subset L.$$
The assumption that $\lb$ is very ample implies that the Kodaira map $\Phi_L$ is an embedding.

Alternatively we can describe $\Phi_L$ more concretely as follows. Let
$x \in X$. Let $h \in L$ be a section of $\lb$ with $h(x) \neq 0$;
such an $h$ exists since $\lb$ is very ample (hence $L$ has
no base locus). Let $\ell_{x} \in L^*$ be 
defined by: 
$$\ell_{x} (f) = f(x)/h(x), \quad \forall f \in L.$$
Then $\Phi_L(x)$ is the point in $\p(L^*)$ represented by $\ell_x$.
It is straightforward to check that $\Phi_L(x)$ is independent 
of the choice of the section $h$.

Now let $L^k$ denote the image of the $k$-fold product $L \otimes
\cdots \otimes L$ in $H^0(X, \lb^{\otimes k})$ under the natural map
given by taking the 
product of sections. (In general this map may not be surjective.) The
homogeneous coordinate ring of $X$ with respect to the 
embedding $\Phi_L: X \hookrightarrow \p(L^*)$ can be identified with the graded algebra 
$$R = R(L) = \bigoplus_{k \geq 0} R_k,$$ where $R_k := L^k$. This is a subalgebra of the {\it ring of sections} 
$$R(\lb) = \bigoplus_{k \geq 0} H^0(X, \lb^{\otimes k}).$$

Recall that the \emph{Hilbert function} of the graded algebra $R$ is
defined as $H_R(k) := \dim_{\c}(R_k) = \dim_{\c}(L^k)$. The
celebrated theorem of Hilbert on the degree of a
projective variety states the following: 
\begin{enumerate} 
\item For sufficiently large values of $k$, the function $H_R(k)$
coincides with a polynomial of degree $n = \dim_{\c}(X)$. 
\item Let $a_n := \lim_{k
  \to \infty} H_R(k)/k^n$ be the leading coefficient of this
polynomial.  Then $n! a_n$ is equal to the degree of 
the projective embedding of $X$ in 
$\p(L^*)$ (in other words, the self-intersection number
of the divisor class of the line bundle $\lb$).
\end{enumerate}

\subsection{Valuations and Newton-Okounkov bodies} \label{subsec-valuation}

Newton-Okounkov bodies give information about asymptotic behavior of Hilbert functions of graded algebras. In this 
section we review some background material about Newton-Okounkov bodies following \cite{KKh1, KKh2}. 
In the present paper we will only be concerned with 
graded algebras which are homogeneous coordinate rings of projective
varieties, as introduced in the previous section, so in the discussion below we restrict to 
this case. 

Following the notation of the previous section, 
we now wish to associate a convex body
$\Delta(R) \subset \r^n$ to $R$. In the case when $X$ is a projective
toric variety, there is a well-known such convex body - namely, the
Newton polytope of $X$, {that is, the convex hull of the weights of $T$ acting on $L$}. 
However, for a general projective variety, we
do not have such toric methods at our disposal. The tool we use for
this purpose - as initially proposed by Okounkov in \cite{Okounkov1,
  Okounkov2} and further developed in \cite{KKh1, KKh2, LM} - is a \textbf{valuation} on the
field $\c(X)$ of rational functions on the variety $X$. We now recall
the definition. 

We fix once and for all a total order $<$ on the lattice $\z^n$ which
respects addition.
Indeed, for additional concreteness, we always consider the standard
lexicographic order. 

\begin{Def} \label{def-valuation}
A {\it valuation} on the field $\c(X)$ is a function 
$v: \c(X) \setminus\{0\} \to \z^n$ satisfying the following: for any $f, g \in \c(X) \setminus \{0\}$,
\begin{itemize}
\item[(a)] $v(fg) = v(f) + v(g)$,
\item[(b)] $v(f+g) \geq \min(v(f), v(g))$, and 
\item[(c)] $v(\lambda f) = v(f)$, for all $0 \neq \lambda \in \c$.
\end{itemize}
Moreover, we say the valuation $v$ has {\it one-dimensional leaves}
if it additionally satisfies the following: 
\begin{equation}\label{one-dimensional}
\begin{minipage}{0.7\linewidth}
if $v(f) = v(g)$, then
there exists a non-zero constant $\lambda \neq 0 \in \c$ such that 
$v(g - \lambda f) > v(g)$ or $g-\lambda f = 0$. 
\end{minipage}
\end{equation}

\end{Def}

If $v$ is a valuation with one-dimensional leaves, then the image of $v$ is a sublattice of $\z^n$ of full rank. 
Hence, by replacing $\z^n$ with this sublattice if necessary, we will always
assume without loss of generality that $v$ is surjective.

Given a variety $X$, there exist many possible valuations with
one-dimensional leaves on its field of rational functions 
$\c(X)$. Below, we list several examples which arise quite naturally
in geometric contexts. 

\begin{Ex} \label{ex-Grobner-val}
Let $X$ be a curve and let $p$ be a smooth point on $X$.  
Then \[
v(f) := \textup{ order of zero or pole of $f$ at $p$} 
\]
defines a valuation. 
More generally, if $X$ is an $n$-dimensional variety, a 
choice of a coordinate system at a smooth point $p$ on $X$ 
gives a valuation on $\c(X)$ with one-dimensional leaves.
\end{Ex}

\begin{Ex} \label{ex-Parshin-val} Generalizing
  Example~\ref{ex-Grobner-val} further, we can construct a valuation
  out of a flag of subvarieties in $X$. More specifically, let
$$\{p\} = X_n \subset \cdots \subset X_0 = X $$ be a sequence of closed irreducible subvarieties of $X$
such that $\dim_\c(X_k) = n-k$ for $0 \leq k \leq n$, and assume $X_{k}$ is
non-singular along $X_{k+1}$ for $0 \leq k < n$ (i.e. the local ring $\mathcal{O}_{X_{k}, X_{k+1}}$ is regular). 
Such a sequence of subvarieties is
sometimes called a \emph{Parshin point} on the variety $X$.  A collection $u_1, \ldots, u_n$
of rational functions on $X$ is a {\it system of parameters} about
such a sequence if for each $k$, ${u_k}_{|X_k}$ is a rational function
on $X_k$ which is not identically zero and which has a zero of first
order on the hypersurface $X_{k+1}$ (in other words the image of $u_k$ is a generator of the maximal ideal of $\mathcal{O}_{X_k, X_{k+1}}$). By the non-singularity assumption
above, such a system of parameters always exists.  Given a sequence of
normal subvarieties and a system of parameters $u_1, \ldots, u_n$, we
can define a valuation $v$ on $\c(X)$ with one-dimensional leaves and
values in $\z^n$ as follows. Let $f \in
\c(X)$ with $f \neq 0$. Then $v(f) = (k_1, \ldots, k_n)$ where the
$k_i$ are defined inductively: the first coordinate $k_1$ is the order of vanishing of $f$ on $X_1$. Then $f_1
= (u_1^{-k_1}f)_{|X_1}$ is a well-defined rational function on $X_1$
which is not identically zero, and $k_2$ is the order of vanishing of
$f_1$ on $X_2$. Continuing in this manner defines all the $k_i$. (In fact, 
the normality assumption on the $X_k$ is not crucial; it can be
avoided by passing
to the normalizations.)
\end{Ex}

The following proposition is simple but fundamental; it states that the image of
the valuation on a finite-dimensional subspace $E$ of $\c(X)$ is in one-to-one correspondence with a basis of $E$, in
much the same way that the integral points in the Newton polytope
of a projective toric variety $X$ correspond to a basis of
$H^0(X,\lb)$. 
The proof is straightforward from the 
defining properties of a valuation with one-dimensional leaves. 

\begin{Prop} \label{prop-val-dim}
Let $E \subset \c(X)$ be a finite-dimensional subspace of $\c(X)$.
Then $\dim_\c(E) = \#v(E \setminus \{0\})$. In other words, the
dimension of $E$ is equal to the number of values which the valuation
$v$ attains on $E \setminus \{0\}$. 
\end{Prop}

Fix a choice of a valuation $v$ with one-dimensional leaves on $\c(X)$. 
Using the valuation $v$ we now associate a semigroup $S(R) \subset \n \times \z^n$
to the homogeneous coordinate ring $R$ of $X$. 
First we identify $L = H^0(X, \lb)$ with a (finite-dimensional)
subspace of $\c(X)$ by choosing a non-zero element $h \in L$ and
mapping $f \in L$ to the rational function $f/h \in \c(X)$. Similarly,
we can associate the rational function $f/h^k$ to an element $f \in R_k := L^k \subseteq H^0(X,
\lb^{\otimes k})$. Using these
identifications, we define 
\begin{equation}\label{eq:definition S}
S = S(R) = S(R,v,h) = \bigcup_{k > 0} \{ (k, v(f / h^k)) \mid f \in
L^k \setminus \{0\}\}.
\end{equation}
From the property (a) in Definition~\ref{def-valuation} it follows
that $S(R)$ is an additive semigroup. Moreover, 
from the property~\eqref{one-dimensional} in Definition \ref{def-valuation} it is
straightforward to show the following. 

\begin{Prop}
The group generated by the semigroup $S = S(R)$, considered as a
semigroup in $\z \times \z^n \cong \z^{n+1}$, is (all of) $\z^{n+1}$.
\end{Prop}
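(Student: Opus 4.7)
The plan is to show that the subgroup $G \subseteq \z^{n+1}$ generated by $S = S(R)$ contains both the element $(1,0)$ and the full sublattice $\{0\} \times \z^n$, from which the conclusion $G = \z^{n+1}$ follows immediately.

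First, I would observe that $(1, 0) \in S$. Indeed, the chosen section $h$ lies in $L = L^1$, and the rational function $h/h^1 = 1$ has $v(1) = 0$ by property (a) of Definition~\ref{def-valuation} (since $v(1) = v(1 \cdot 1) = 2v(1)$). Thus the pair $(1, v(h/h^1)) = (1, 0)$ is a member of $S$, and hence of $G$.

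Next, I would argue that for any $\alpha \in \z^n$, the vector $(0, \alpha)$ lies in $G$. Since $v$ has one-dimensional leaves, we have arranged $v: \c(X) \setminus \{0\} \to \z^n$ to be surjective, so there exists $g \in \c(X) \setminus \{0\}$ with $v(g) = \alpha$. Here is the key point: because $\lb$ is very ample, the Kodaira map $\Phi_L : X \hookrightarrow \p(L^*)$ is a closed embedding, so the field $\c(X)$ coincides with the field of rational functions on the image, and any rational function can be written as a ratio $g = f_1/f_2$ of two elements $f_1, f_2 \in L^k \setminus \{0\}$ of the same graded piece $R_k = L^k$ for some $k > 0$. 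Then $v(f_1/h^k) - v(f_2/h^k) = v(f_1/f_2) = v(g) = \alpha$ by the multiplicativity property of $v$, so
\[
(k, v(f_1/h^k)) - (k, v(f_2/h^k)) = (0, \alpha),
\]
and both terms on the left lie in $S$ by definition of $S$ in~\eqref{eq:definition S}. Hence $(0,\alpha) \in G$.

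Combining these two observations, $G$ contains the span of $(1,0)$ together with $\{0\}\times \z^n$, which is all of $\z \times \z^n = \z^{n+1}$, completing the argument. The only mildly subtle step is the representation $g = f_1/f_2$ with $f_1, f_2$ in a \emph{common} graded piece $L^k$; this is the place where very ampleness of $\lb$ (rather than merely ampleness) is used, since it guarantees that $\Phi_L$ is an embedding and so the homogeneous coordinate ring $R$ recovers the function field $\c(X)$ as the degree-zero part of its homogeneous localization.
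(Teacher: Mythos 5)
The paper does not actually supply a proof of this proposition; it is stated following the remark that it ``is straightforward to show'' from property~\eqref{one-dimensional}. So there is nothing to compare against directly, and I have to assess your argument on its own terms.

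Your proof is correct. The two pieces fit together cleanly: $(1,0) \in S$ because $v(h/h^1)=v(1)=0$ (and $v(1)=0$ follows from multiplicativity, as you note); and for any $\alpha\in\z^n$, surjectivity of $v$ (which is exactly where the one-dimensional-leaves hypothesis enters, via the paper's standing normalization of the target lattice) gives $g\in\c(X)^*$ with $v(g)=\alpha$, and the standard fact that the function field of $\Proj R$ is the degree-zero part of the homogeneous localization of $R$ lets you write $g=f_1/f_2$ with $f_1,f_2\in R_k=L^k$, hence $(0,\alpha)=(k,v(f_1/h^k))-(k,v(f_2/h^k))$ is a difference of two elements of $S$. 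Together $(1,0)$ and $\{0\}\times\z^n$ generate $\z^{n+1}$. One small quibble: the parenthetical ``this is where very ampleness rather than merely ampleness is used'' is not quite the right emphasis --- what you really need is that $\Phi_L$ is birational onto its image so that $\c(X)$ is the degree-zero fraction field of $R$, and that $R$ is defined so that $R_k=L^k$; very ampleness gives this comfortably, but the logical role it plays is in identifying $R$ as the homogeneous coordinate ring of $X$, not in the degree-matching trick per se (that is automatic for any graded domain). This is a minor expository point and does not affect the validity of the argument.
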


\begin{Rem} \label{rem-depend-valuation}
The semigroup $S = S(R)$ depends on the 
choice of valuation $v$ on $\c(X)$ and the section $h$. The dependence
on $h$ is minor; a different choice of $h'$ would lead to a semigroup
which is shifted by the vector $k v(h/h')$ at the level $\{k\} \times \z^n$. 
However, the dependence 
on the valuation $v$ is much more subtle. 
\end{Rem}

In order to keep track of the natural $\n$-grading on the ring $R =
\bigoplus_{k \geq 0} R_k$, 
it is convenient to extend the valuation $v$ to a valuation $\tilde{v}: R \setminus \{0\} \to \n \times \z^n$ as follows.
We define an ordering on $\n \times \z^n$ by $(m, u) \leq (m', u')$ if
and only if 
\begin{equation}\label{eq:ordering on N times Z^n} 
\textup{ either ($m>m'$) \quad or \quad ($m=m'$ and $u \leq u'$) (note the switch!) }
\end{equation}
For $f \in R$, we now define
\begin{equation}\label{eq:def tilde v} 
\tilde{v}(f) := (m, v(f_m/h^m))
\end{equation}
where $f_m$ is the highest-degree homogeneous component of $f$. The map $\tilde{v}$ is a valuation on $R$
with the above ordering on $\n \times \z^n$. By construction, the
image of the valuation $\tilde{v}$ is exactly the semigroup $S = S(R)$.

Given $S \subset \n \times \z^n$ an arbitrary additive semigroup, we
can associate to it a convex body as follows. Let $C(S)$ denote  
the closure of the convex hull of $S \cup \{0\}$, considered as a
subset of $\r_{\geq 0} \times \r^n \subset \r^{n+1}$. It is a closed convex cone with apex at the
origin. If $S$ is a finitely generated semigroup then 
$C(S)$ is a {rational polyhedral} cone. Consider the intersection of the cone $C(S)$ with the plane $\{1\} \times \r^n$ and 
let $\Delta(S)$ denote the projection of this intersection to $\r^n$, via the projection on the second factor $\r \times \r^n \to \r^n$. 
Equivalently, $\Delta(S)$ can be described as 
\begin{equation}\label{eq:def Delta S}
\Delta(S) = \overline{\conv(\bigcup_{k>0} \{x/k \mid (k, x) \in S
  \})}.
\end{equation}
If the cone $C(S)$ intersects the plane $\{0\} \times \r^n$ only at the origin, then the convex set $\Delta(S)$
is bounded and hence is a convex body. This will be the case for all
the semigroups we consider in this
paper. We refer to $\Delta(S)$ as the \textbf{convex body associated to the 
semigroup $S$.}

\begin{Def} \label{def-Ok-body}
Let $S = S(R)$ be the semigroup associated to $(R, v, h)$ as above.
We denote the cone $C(S)$ and the convex body $\Delta(S)$ by 
$C(R)$ and $\Delta(R)$ respectively. In this case, the cone $C(R)$ intersects $\{0\} \times \r^n$ only at the origin and 
hence $\Delta(R) = \Delta(R,v,h)$ is a convex body, the \textbf{Okounkov body (also called Newton-Okounkov body) of
  $(R,v,h)$}. It is also sometimes denoted as $\Delta(X,v)$ or simply
$\Delta(X)$ when we wish to emphasize the underlying projective variety $X$. 
\end{Def}

\begin{Rem} 
From Remark~\ref{rem-depend-valuation} it follows that a different choice $h' \in L$ 
would yield a Newton-Okounkov body $\Delta(R,v,h')$ which is shifted by the fixed vector $v(h/h')$. 
Thus, similar to the case of $S(R)$, the dependence of the Newton-Okounkov body on the choice of section $h$ is minor.
However, as in Remark~\ref{rem-depend-valuation}, the dependence of $\Delta(R,v)$ 
on the valuation $v$ is more subtle. 
\end{Rem}

The Newton-Okounkov body $\Delta(R)$ encodes information about the 
asymptotic behavior of the Hilbert function of $R$ (see \cite{KKh1, KKh2}, \cite{LM} and 
\cite{Okounkov1, Okounkov2}).
Let $H_R(k) := \dim_\c(R_k)$ be the Hilbert function of the graded algebra $R$.

\begin{Th} \label{th-asymp-Hilbert-Okounkov-body}
The Newton-Okounkov body $\Delta(R)$ has real dimension $n$, and 
the leading coefficient $$a_n = \lim_{k \to \infty} \frac{H_R(k)}{k^n},$$ 
of the Hilbert function of $R$ is equal to $\Vol_n(\Delta(R))$, the
Euclidean volume of $\Delta(R)$ in $\r^n$. In particular, the degree
of the projective embedding of $X$ 
in $\p(L^*)$ is equal to $n! \Vol_n(\Delta(R))$. 
\end{Th}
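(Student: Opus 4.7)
The plan is to deduce Theorem~\ref{th-asymp-Hilbert-Okounkov-body} by combining Proposition~\ref{prop-val-dim}, which relates dimensions of subspaces of $\c(X)$ to cardinalities of their valuation images, with a classical asymptotic counting result for semigroups in cones.

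First I would identify the Hilbert function of $R$ with the slice-counting function of the semigroup $S = S(R)$. For each $k \geq 0$ set
$$S_k := \{u \in \z^n \mid (k,u) \in S\} = \{v(f/h^k) \mid f \in L^k \setminus \{0\}\}.$$
Since $v$ is a valuation and $v(h^k)$ is a fixed vector, the map $f \mapsto v(f/h^k)$ from $L^k \setminus \{0\}$ to $S_k$ has fibers equal to those of $f \mapsto v(f)$, so Proposition~\ref{prop-val-dim} applied to $E = L^k$ gives
$$H_R(k) = \dim_\c R_k = \dim_\c L^k = \#v(L^k \setminus \{0\}) = \#S_k.$$

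Second, I would invoke the Khovanskii-type counting theorem for semigroups in cones (Kaveh--Khovanskii \cite{KKh1, KKh2}): if $S \subset \n \times \z^n$ is a semigroup whose associated group is all of $\z^{n+1}$ (which we have, by the proposition preceding Remark~\ref{rem-depend-valuation}) and whose closed convex cone $C(S)$ meets the hyperplane $\{0\} \times \r^n$ only at the origin (noted in the discussion preceding Definition~\ref{def-Ok-body}), then
$$\lim_{k \to \infty} \frac{\#S_k}{k^n} = \Vol_n(\Delta(S)).$$
Morally, this is just lattice-point counting: the slice $S_k$ asymptotically fills the $k$-th dilate of the cross-section $\Delta(S)$, with boundary discrepancies contributing only terms of order $k^{n-1}$.

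Combining these two steps yields $\lim_{k\to\infty} H_R(k)/k^n = \Vol_n(\Delta(R))$. By the Hilbert theorem recalled at the beginning of Section~\ref{sec-prelim-part2}, $H_R(k)$ agrees with a polynomial of degree $n$ for $k \gg 0$, whose leading coefficient is precisely this limit; hence $a_n = \Vol_n(\Delta(R))$, so this volume is positive and $\Delta(R)$ has full real dimension $n$. The degree assertion then follows from the classical identity $\deg \Phi_L(X) = n! \, a_n$. The main obstacle is the semigroup counting result in step two, which is genuinely nontrivial but is a standard theorem available in \cite{KKh1, KKh2}; the remainder is a direct consequence of Proposition~\ref{prop-val-dim} and the classical Hilbert--degree formula.
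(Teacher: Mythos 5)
The paper does not prove this theorem; it is imported from the references \cite{KKh1, KKh2}, \cite{LM}, \cite{Okounkov1, Okounkov2}. Your sketch correctly reconstructs the argument those sources give: the identification $H_R(k) = \#S_k$ via Proposition~\ref{prop-val-dim}, the Kaveh--Khovanskii asymptotic counting theorem for semigroups generating $\z^{n+1}$ with strongly convex cone, and the classical Hilbert--degree identity. One small remark: the dimension assertion $\dim_\r \Delta(R) = n$ can also be read off directly from the proposition just before Remark~\ref{rem-depend-valuation} (that $S$ generates $\z^{n+1}$ as a group), since this forces the linear span of $C(S)$ to be all of $\r^{n+1}$ and hence the cross-section $\Delta(S)$ to have dimension $n$; your route via $a_n > 0$ is an equally valid alternative. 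Both rely on the same nontrivial input — the semigroup counting theorem — which is indeed the content of the cited Kaveh--Khovanskii results.
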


\begin{Rem} \label{rem-Okounkov-body} In \cite{KKh2}, the asymptotic
  behavior of Hilbert functions of a much more general class of graded
  algebras is addressed, namely, graded algebras of the form $A = \bigoplus_{k \geq 0} A_k$ such that 
\begin{itemize} 
\item $A_k \subset \c(X)$ for all $k$, and
\item $A$ is
  contained in a finitely generated algebra of the same form.
\end{itemize}
Note that such an algebra $A$ is not necessarily finitely generated.
This class of graded algebras includes arbitrary \emph{graded linear
series} on a variety, and in particular, the ring of sections of arbitrary
line bundles (not just ample ones).
\end{Rem}

\section{A toric degeneration associated to a valuation} \label{sec-toric-degen}

As in Section \ref{sec-prelim-part2}, let $X$ be an $n$-dimensional
projective variety with a very ample line bundle $\lb$, the
corresponding homogeneous coordinate ring $R$, a valuation $v$ with
one-dimensional leaves and the value semigroup $S = S(R, v, h)$. 
Moreover, from now on, we place
the additional assumption that: 
\begin{center}  
\emph{$S$ is finitely generated.} 
\end{center} 
This assumption implies that the Newton-Okounkov body $\Delta(R)$ is a
rational polytope. The semigroup algebra $\c[S]$ of a semigroup $S
\subseteq \n \times \z^n$ is defined to be the subalgebra of $\c[t,
x_1, \ldots, x_n]$ spanned by the monomials $t^k x_1^{a_1} \cdots
x_n^{a_n}$ for all $(k, a_1,\ldots, a_n) \in S$. Recall that $S$ is
the image of the extended valuation $\tilde{v}: R \setminus \{0\} \to
\n \times \z^n$ of~\eqref{eq:def
  tilde v}. 
In \cite{KKh1}, the authors mentioned briefly 
that the homogeneous coordinate ring $R$ of $X$ can be
degenerated in a flat family to the semigroup algebra $\c[S]$
associated to the value semigroup $S = S(R)$. In fact, Teissier proved already in 1999 that  
there always exists a toric degeneration from an algebra to the
semigroup algebra associated to a valuation \cite{Te99}. (More
generally, the existence of a degeneration of an algebra to its
associated graded with respect to a filtration is 
well-known and classical in commutative algebra \cite[Chapter 6]{Eisenbud}.) 
In the context of Newton-Okounkov bodies, this idea was 
developed further, with suggestions on applications to other areas such as Schubert calculus, 
in \cite{Anderson10}. Geometrically, the existence of the algebraic degeneration means that 
the variety $X$ can be degenerated, in a flat family $\X$, to the (not necessarily normal)
projective toric variety $X_0$ associated to the semigroup $S$. The
normalization of $X_0$ is then the (normal) toric variety
corresponding to the polytope $\Delta(R)$.

Our construction of an integrable system uses Anderson's toric
degeneration as a key ingredient, so in this section we both review and
slightly expand on his exposition in \cite{Anderson10}. 
First we recall how the semigroup algebra $\c[S]$ of $R$ is related to
the data of the extended valuation $\tilde{v}$. The link comes from a
filtration of $R$ defined using $\tilde{v}$ as follows. Let 
\[
R_{\geq (m,u)} := \left\{f \in R \, \lvert \, \tilde{v}(f) \geq (m,u)
  \textup{ or } f=0\right\}
\]
and similarly for $R_{>(m,u)}$. By the definition of the valuation $v$
and the definition of the order~\eqref{eq:ordering on N times Z^n} on
$\n \times \z^n$, each $R_{\geq (m,u)}$ (and $R_{>(m,u)}$) is a
(finite-dimensional) vector subspace of $R$. Moreover, again by
properties of valuations, for any $(m,u)$ and $(m', u')$, we have 
\[
R_{\geq (m,u)} \cdot R_{\geq (m', u')} \subseteq R_{\geq (m+m',
  u+u')}.
\]
We can now define the associated graded ring $\gr R$ with respect to
this filtration as 
\[
\gr R := \bigoplus_{(m,u)} R_{\geq (m,u)} / R_{> (m,u)}.
\]
This is naturally an $S$-graded ring. In fact, since $\tilde{v}$ has
one-dimensional leaves, the spaces $R_{\geq (m,u)}/R_{>(m,u)}$ have
dimension $1$ precisely when $(m,u) \in S$, and $0$ otherwise.
Since the homogeneous elements of $\gr R$ are not zero divisors, 
one shows that $\gr R$ is isomorphic to the semigroup algebra $\c[S]$ (see \cite[Remark 4.13]{BG}).

As mentioned above, there is a well-known method, namely the Rees
algebra construction, of degenerating an algebra $R$ to its
associated graded $\gr R$ corresponding to a filtration of $R$ in a
flat family. This is explained in more detail in \cite{Anderson10}.
Alternatively, one can describe the same degeneration using a choice
of a collection of sections 
$\{f_{ij}\} \in R$ as we now explain. This choice gives rise to an
embedding of the degenerating family in a weighted projective space,
which we use in our geometric constructions. 

\begin{Def}\label{def-Khovanskii}
Let $R = \bigoplus_i R_i$ and $\tilde{v}$ as above. 
We say that a set $\BB \subset R$ is a {\bf Khovanskii basis} 
\footnote{The name ``Khovanskii basis'' was suggested by B. Sturmfels in honor of A. G. Khovanskii's influential 
contributions to combinatorial algebraic geometry.} 
{\bf for $(R, \tilde{v})$} if 
the set of images $\tilde{v}(\BB) = \{\tilde{v}(f) \mid f \in \BB\}$ generate the semigroup
$S = S(R) = S(R,v, h)$. 
\end{Def}
This definition generalizes the notion of a SAGBI (Subalgebra Analogue of
Gr\"obner Basis for Ideals) basis for a subalgebra of a polynomial ring, and
was introduced in \cite{Kaveh-string} and \cite{Manon}. In general a Khovanskii basis may or may not be finite.

Since we assumed that $S$ is finitely generated $R$ has a finite Khovanskii basis $\BB$.
Without loss of generality and by adding more elements to $R$ if necessary, we can assume that $\BB$ consists of 
homogeneous elements $f_{ij}$, $1 \leq i \leq r$, $1 \leq j \leq n_i$, 
with the following additional properties: 
\begin{equation*}
\begin{minipage}{0.9\linewidth}
(a) $f_{ij} \in R_i$ for all $1 \leq i \leq r, 1 \leq j \leq n_i$, and 
\end{minipage}
\end{equation*}
\begin{equation*}
\begin{minipage}{0.9\linewidth}
(b) for each $i$, the collection $\{f_{i1}, f_{i2}, \ldots, f_{i
  n_i}\}$ is a vector space (i.e. additive) basis for $R_i$. 
\end{minipage}
\end{equation*}

For the remainder of this discussion, we fix a finite Khovanskii basis 
$\{f_{ij}\}_{1 \leq i \leq r, 1 \leq j \leq n_i}$ of $R$ satisfying
the conditions (a) and (b) above. 
Since the $\tilde{v}(f_{ij})$ generate the semigroup $S(R)$, by 
a standard argument using the classical \emph{subduction algorithm}, 
we can show that every element of $R$ can be represented as a
polynomial in the $f_{ij}$. For the sake of
completeness we include a proof. 

\begin{Prop} \label{prop-subduction}
Any $f \in R$ can be written as a polynomial in the $f_{ij}$. In other words, 
a finite Khovanskii basis $\{f_{ij}\}$ of $R$ generates $R$ as an algebra. 
\end{Prop}

\begin{proof}
  First we observe that in the value semigroup $S = S(R, \tilde{v})$
  every increasing sequence of elements eventually stabilizes. Let $(m_1, u_1) \leq
  (m_2, u_2) \leq \cdots$ be an increasing sequence in $S$. Then from
  the definition of $\leq$ on $\n \times \z^n$ (see \eqref{eq:ordering
    on N times Z^n}) we have $m_1 \geq m_2 \geq \cdots$. Since $\n$ is
  well-ordered and each $S_i$ is finite we see that for $i$
  sufficiently large $(m_i, u_i) = (m_{i+1}, u_{i+1}) = \cdots$ as
  claimed. Now take $0 \neq h \in R$. Write $\tilde{v}(h) =
  \sum_{i=1}^t k_i \tilde{v}(f_i)$. Since $v$ (and hence $\tilde{v}$)
  have one-dimensional
 leaves, there exists a nonzero constant $\lambda \in \c$ such that $g_1 := h - c f_1^{k_1} \cdots f_t^{k_t}$
has the property that either $g_1 = 0$ or $\tilde{v}(g_1) >
\tilde{v}(h)$. If $g_1=0$ we are done. Otherwise, 
repeating the same argument for $g_1$ in place of $h$, we obtain a sequence 
$\tilde{v}(h) < \tilde{v}(g_1) < \tilde{v}(g_2) < \cdots$. Since an
increasing sequence in $S$ eventually stabilizes, we see that at some
point we must have
$g_i = 0$. This finishes the proof. 

\end{proof}

We can now state the main result of this section \cite[Proposition
5.1]{Anderson10}.

\begin{Th} \label{th-toric-degen}
Let $R$ be as above and assume $S = S(R)$ is finitely generated. Then 
there is a finitely generated, $\n$-graded, flat $\c[t]$-subalgebra $\RR \subset R[t]$, such that:
\begin{itemize}
\item[(a)] $\RR[t^{-1}] \cong R[t, t^{-1}]$ as $\c[t, t^{-1}]$-algebras, and
\item[(b)] $\RR / t \RR \cong \gr R$.
\end{itemize}
\end{Th}

The geometric interpretation \cite[Corollary 5.3]{Anderson10} of
Theorem~\ref{th-toric-degen} is obtained by taking $\Proj$ with
respect to the $\n$-grading on $\RR$. 

\begin{Cor} \label{cor-toric-degen}
There is a projective flat family $\pi: \X := \Proj~ \RR \to \c$ such that:
\begin{itemize}
\item[(a)] For any $z \neq 0$ the fiber $X_z = \pi^{-1}(z)$ is isomorphic to $X = \Proj~ R$. 
More precisely, the family over $\c \setminus \{0\}$, i.e. 
$\pi^{-1}(\c \setminus \{0\})$ is isomorphic to $X \times (\c \setminus \{0\})$. 
\item[(b)] The special fiber $X_0 = \pi^{-1}(0)$ is isomorphic to
  $\Proj(\gr R) \cong \Proj \c[S]$ and is equipped with an action of
  $(\c^*)^n$, where $n=\dim_\c X$. 
The normalization of the variety 
$\Proj(\gr R)$ is the toric variety associated to the rational polytope $\Delta(R)$.
\end{itemize}
\end{Cor}

\begin{proof}[Sketch of proof of Theorem \ref{th-toric-degen}]

Let $\tilde{v}(f_{ij}) = (i, u_{ij})$.  Let $\overline{f}_{ij}$
  denote the image of $f_{ij}$ in the associated graded $\gr R$. Let $A =
  \c[x_{ij}]$ denote the polynomial algebra in the indeterminates
  $x_{ij}$, $1 \leq i \leq r, 1 \leq j \leq n_i$. Define an
  $(\n \times \z^n)$-grading on $A$ by $\deg(x_{ij}) := (i,u_{ij})$; thus the
  surjective map $A \to \gr R$ defined by $x_{ij} \mapsto
  \overline{f}_{ij}$ is a map of graded rings. The kernel of this map
  is a homogeneous ideal $I_0$. Let $\overline{g}_1, \ldots,
  \overline{g}_q$ be homogeneous generators for the kernel and let 
  $\deg(\overline{g}_k) = (n_k, v_k)$. It follows
  that $\overline{g}_k(f_{11}, \ldots , f_{rn_r})$ lies in
  $R_{>(n_k,v_k)}$ for each k.  By the proof of Proposition
  \ref{prop-subduction} one can find elements $g_k \in \overline{g}_k
  + A_{<(n_k, v_k)}$ such that $g_k(f_{11},\ldots ,f_{rn_r}) = 0$. The
  $g_k$ will not be homogeneous for the full $\n \times \z^n$ grading
  of $A$, but since the $f_{ij}$ are homogeneous for the first $\n$
  factor, the $g_k$ can be chosen to respect the $\n$-grading as
  well. 

The induced map $A/\langle g_1, \ldots, g_q\rangle \to R$ is
  an isomorphism. In fact, if $I$ denotes the kernel of the map $A \to
  R$, then $g_k \in I$ by construction, and the initial terms $g_k$
  generate the initial ideal $I_0$ of $I$ (which is the kernel of $A
  \to \gr R$). It follows that the $g_k$'s form a Gr\"{o}bner basis for
  $I$, with respect to the term order determined by the order on $\z
  \times \z^n$, cf. \cite[Exercise 15.14(a)]{Eisenbud}. (Specifically, the term order is $\prod_{ij}
  x_{ij}^{a_{ij}} \leq \prod_{ij}x_{ij}^{b_{ij}}$ if and only if
  $\sum_{ij} a_{ij}(i, u_{ij}) \leq \sum_{ij} b_{ij}(i, u_{ij})$. Note
  that we allow ties between monomials in this notion of term order.)

The remainder of the construction and proof will only be sketched, see
\cite{Anderson10} for details. Here we record only the points directly
relevant to our construction in
Section~\ref{sec-int-system-from-val}. 
Let $\mathcal{S} \subset \z \times \z^n$ be a finite subset consisting of all the degrees $(s, v)$ for all the 
monomials appearing in any of the $g_k$. Let $p: \z \times \z^n \to \z$ be a linear map which preserves the 
ordering on $\mathcal{S}$ (cf. \cite[Lemma 5.2, Proof of Prop. 5.1]{Anderson10}).
Let $w_{ij} = p(i, u_{ij})$ be the degree of $x_{ij}$ under the weighting induced by $p$. From the choice of $p$ it follows that 
for any $k$, the initial term of $g_k$ with respect to the weighting defined by $p$ is exactly $\overline{g}_k$. Let $\ell_k = p(n_k ,v_k)$, and set 
$$\tilde{g}_k = \tau^{\ell_k} g_k(\tau^{-w_{11}}x_{11} ,\ldots,\tau^{-w_{rn_r}}x_{rn_r}) \in A[\tau],$$
where $\tau$ is an indeterminate. 
Consider the homomorphism $A[\tau] \to R[t]$ given by
\begin{equation}\label{eq:homomorphism Atau to Rt}
x_{ij} \mapsto t^{w_{ij}}f_{ij} \quad \textup{ and } \tau \mapsto t.
\end{equation}
Let $\RR$ be the image of this homomorphism; this ring satisfies all the
conditions of the theorem.  In addition, the ring $\RR$ can be
presented explicitly as $\RR =
A[\tau]/(\tilde{g}_1,\ldots,\tilde{g}_q)$. (This is a standard way of
producing a Rees ring from a Gr\"{o}bner basis; see, e.g.,
\cite[Theorem 15.17]{Eisenbud}.)  
\end{proof} 

The results just recounted yield a projective flat family $\X$
associated to $R$ and $S(R,v,h)$. In fact, we will see below, in
Section~\ref{sec-kahler}, that the choice of a Khovanskii basis
$\{f_{ij}\}$ above naturally gives rise to an explicit embedding of the family
into $W\p \times \c$, where $W\p$ denotes a weighted projective space,
and the restriction of the standard projection $W\p \times \c \to \c$
to the family $\X$ is exactly the projection $\pi: \X \to \c$ of
Corollary~\ref{cor-toric-degen}. By embedding the weighted projective
space $W\p$ into a ``large'' projective space $\p({\bf V}_d^*)$ we
then obtain an embedding of the family $\X$ into $\p({\bf
  V}_d^*) \times \c$. Our construction of an integrable system on the original
variety $X$ will depend on this embedding in the sense that we use a
K\"ahler structure on $\X$ which is the restriction of a K\"ahler structure on $\p({\bf
  V}_d^*) \times \c$, judiciously chosen to be compatible with the
original K\"ahler structure on $X$ as well as the
(compact) torus-invariant K\"ahler structure on the toric variety
$X_0$. The details are the subject of the next section. 

As a final remark, we note that if $X$ is assumed to be smooth, then
by Corollary \ref{cor-toric-degen}(a) we know that the family $\X$ is
nonsingular away from the special fibre $X_0$. In fact, by Proposition
\ref{prop-sing-family} we know that more is true. We have the
following.

\begin{Cor} \label{cor-sing-family} 
Let $X, R, \tilde{v}, S$ and $\X$ be as above. Suppose that
$X$ is smooth. Then the family $\X$ is nonsingular away from the singular locus of the special fiber $X_0$.
\end{Cor}

\section{A K\"ahler structure on the family $\X$} \label{sec-kahler} 
As usual let $X$ be an $n$-dimensional projective variety. From now on in addition we assume that
\begin{center}
\emph{$X$ is smooth.}
\end{center}
Let $\lb$ be a very ample line bundle on $X$ with the corresponding embedding $X \hookrightarrow \p(L^*)$ where 
$L = H^0(X, \lb)$. We fix a K\"ahler structure $\omega$ on $X$ by restrcting a Fubini-Study K\"ahler structure 
$\Omega_H$ on the projective space $\p(L^*)$ with respect to a Hermitian product $H$ on $L^*$
(see e.g. \cite{GH}).

We follow the notation and assumptions of Section \ref{sec-toric-degen}, in particular that the value semigroup $S$ is 
finitely generated. As mentioned at the end of Section~\ref{sec-toric-degen}, an explicit
choice of a finite Khovanskii basis $\{f_{ij}\}$ naturally gives rise to an
embedding of the toric degeneration $\X$ constructed in
Corollary~\ref{cor-toric-degen} into $\p({\bf V}_d^*) \times \c$ for
an appropriate projective space $\p({\bf V}_d^*)$ (to be defined
concretely below). This embedding allows us to build an extension of
the K\"ahler structure on the original variety $X$ (the generic fiber
of $\X$) to all of $\X$, in such a way that the extension restricts to
a torus-invariant K\"ahler structure on the special fiber $X_0$. We
now explain the details. 

\subsection{Weighted projective spaces}\label{subsec:wps}
Let $R$ and $S(R,v,h)$ be as in Section~\ref{sec-toric-degen}. In
general, a Khovanskii basis $\{f_{ij}\}$ has elements 
of $R$ of different homogeneous degrees. Thus the choice of
$\{f_{ij}\}$ naturally leads to an embedding of the family $\X$ into a
\emph{weighted} projective space $W\p$ (we recall the definition below). In the simple case when the Khovanskii
basis consists only of elements of degree $1$, the weighted projective
space is an ordinary projective space. In addition, in the next
section we embed the weighted projective space into a large (ordinary)
projective space $\p({\bf V}_d^*)$. Thus it is not strictly logically necessary to
discuss the weighted projective spaces (since we may simply embed $X$
directly into $\p({\bf V}_d^*)$) but we find it more natural to first
discuss the map $X \to W\p$ which takes account of the degrees of the
$f_{ij}$. 

We first recall the definition and some basic facts about weighted
projective spaces. 
Let $V_1, \ldots, V_r$ be finite dimensional vector spaces over $\c$ and $m_1, \ldots, m_r$ positive integers. Consider the 
action of $\c^*$ on $V_1 \times \cdots \times V_r$ by:
$$\lambda \cdot (v_1, \ldots, v_r) = (\lambda^{m_1} v_1, \ldots, \lambda^{m_r} v_r).$$ 
We define the {\bf weighted projective space} $W\p = W\p(V_1, \ldots, V_r; m_1, \ldots, m_r)$ to be the 
quotient of $(V_1 \times \cdots \times V_r) \setminus \{(0, \ldots,
0)\}$ by this $\c^*$-action.
The point in $W\p$ 
represented by $(v_1, \ldots, v_r) \in V_1 \times \cdots \times V_r$
is denoted 
$(v_1 : \cdots : v_r)$. There is a natural rational map $W\p \ratmap \p(V_1) \times \cdots \times \p(V_r)$ given by:
$$(v_1: \cdots : v_r) \mapsto ([v_1], \ldots, [v_r]).$$ This is defined whenever all the $v_i$ are nonzero.
The following is well-known.

\begin{Th} \label{th-basic-prop-WPS}
The weighted projective space $W\p$ is a normal projective
variety. Moreover, $W\p$ is smooth  if and only if $m_1 = \cdots =
m_r$.
\end{Th}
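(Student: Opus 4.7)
The plan is to prove projectivity and normality via the standard $\Proj$ construction, and then establish the smoothness criterion by directly comparing the weighted $\c^*$-action to the standard scalar action. First, I would realize $W\p = \Proj(A)$ where $A = \Sym(V_1^* \oplus \cdots \oplus V_r^*)$ is the polynomial ring graded by assigning degree $m_i$ to elements of $V_i^*$. Since $A$ is a finitely generated graded integral domain over $\c$, this exhibits $W\p$ as a projective variety. More concretely, one can build an explicit closed embedding into a classical projective space via a Veronese-type map using all monomials of a fixed degree $M$, where $M$ is any common multiple of $m_1, \ldots, m_r$.

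For normality, the key observation is that $A$ is a polynomial ring, hence a UFD and integrally closed. The standard affine open cover $D_+(f)$ of $\Proj(A)$ has coordinate rings of the form $(A_f)_0$, obtained by localization and then taking the degree-zero part; both operations preserve normality, the latter because $(A_f)_0$ is the ring of invariants of the $\c^*$-action on $A_f$ induced by the grading, and invariants of a reductive group acting on a normal ring are normal. An equivalent, more conceptual viewpoint is that $W\p$ is the GIT quotient of the smooth affine variety $V \setminus \{0\}$ by the reductive group $\c^*_w$, and such quotients of normal varieties are automatically normal.

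For the smoothness criterion, the easy direction assumes $m_1 = \cdots = m_r = m$. In this case the weighted $\c^*$-action factors as the composition of the surjective $m$-th power map $\c^* \to \c^*$ with the standard scalar action, so the weighted quotient coincides with the standard projective quotient $\p(V_1 \oplus \cdots \oplus V_r) \cong \p^{N-1}$, which is smooth. For the converse, the strategy is to exhibit a singular point when not all $m_i$ agree. After replacing each $m_i$ by $m_i/d$ where $d = \gcd(m_1, \ldots, m_r)$, which does not alter $W\p$ since the kernel $\mu_d \subset \c^*_w$ acts trivially, one may assume the weights are coprime and at least one $m_i$ is strictly greater than $1$. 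The local model near an appropriate torus-fixed point is then a cyclic quotient $\c^k/\mu_{m_i}$ with $\mu_{m_i}$ acting diagonally with the remaining weights, and invoking (or directly computing via) the Chevalley--Shephard--Todd theorem identifies these as genuine quotient singularities. The main obstacle is this converse direction, as ruling out ``accidental'' smoothness coming from a pseudo-reflection action requires a careful analysis of the divisibility relations among the $m_j$'s and of which tangent directions survive in the quotient.
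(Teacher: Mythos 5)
The paper cites this result as well-known and gives no proof, so there is no argument to compare against. Your treatment of normality and projectivity is correct and standard, and the forward direction of the smoothness criterion is fine. The genuine gap is in the converse, and the difficulty you flag at the end is not one that a more careful divisibility analysis can later patch up: the ``only if'' direction is \emph{false} as stated in full generality. Dividing all weights by $d = \gcd(m_1, \ldots, m_r)$ does not eliminate pseudo-reflections. For instance, with $V_1 = V_2 = \c$, $m_1 = 1$, $m_2 = 2$, the weights are already globally coprime yet $W\p(1,2) \cong \p^1$ is smooth; and $W\p(\c^2, \c\,;\, 2, 1) = \p(2,2,1) \cong \p^2$ shows the phenomenon persists even when a weight $>1$ appears with multiplicity. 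Near the coordinate fixed point in a weight-$m_i$ direction the affine chart is a quotient of an affine space by $\mu_{m_i}$ acting via the residues mod $m_i$ of the remaining weights; if those residues all vanish, or give a pseudo-reflection representation, the chart is smooth. The correct hypothesis is \emph{well-formedness}: for every coordinate, the gcd of the weights of the remaining coordinates (with $m_i$ counted $\dim V_i$ times) must equal $1$. Only under that assumption does the Chevalley--Shephard--Todd step produce a genuine singularity.

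In the paper's application this is automatic: $m_1 = 1$ and $\dim V_1 = \dim L^* \geq 2$ because $\lb$ is very ample on a positive-dimensional $X$, so at least two coordinates carry weight $1$; removing any single coordinate still leaves a weight-$1$ coordinate, hence the weight system is well-formed. You should either add well-formedness as a hypothesis in your statement of the converse, or verify it in the paper's setting as above; the reduction ``one may assume the weights are coprime'' is not a sufficient normalization.
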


\subsection{Embedding a projective variety into a weighted projective space} \label{subsec-embed-in-WP}
Returning now to the general setup of Section~\ref{sec-prelim-part2}, 
let $X$ be a projective variety equipped with a very ample line bundle $\lb$ and let $L = H^0(X, \lb)$. 
Let $r > 0$ be a positive integer. For any $k$ with $1 \leq k \leq r$,
define $V_k := ({L^k})^*$ and $m_k := k$. 
Consider the 
associated weighted projective space
$$W\p = W\p(L^*, (L^2)^*, \ldots, (L^r)^*; 1, 2, \ldots, r).$$ 

Analogous to the construction of the Kodaira map $\Phi_L$, we can also define a morphism ${\Psi}: X \to W\p$ as follows. 
Let $x \in X$. Let $h \in L$ be a section with $h(x) \neq 0$; as
before, since
$L$ has no base locus, such an $h$ always exists. 
For each $k$ with $1 \leq k \leq r$, let $\ell_{x, k}$ be the element in $(L^k)^*$ defined by
$$\ell_{x, k} (f) := f(x)/h^k(x), \quad \forall f \in L^k.$$
Define ${\Psi}(x)$ to be the point $(\ell_{x, 1} : \cdots : \ell_{x,
  r}) \in W\p$. It can be checked that ${\Psi}(x)$ is independent 
of the choice of the section $h$ and is a morphism from $X$ to $W\p$.
Note that each $\Phi_{L^k}$ is an embedding, so $\Psi$ is also 
an embedding. 

The morphism $\Psi$ can also be described explicitly in terms of
coordinates (at least on an open dense subset) as follows. 
Fix bases $\{f_{i1}, \ldots, f_{in_i}\}$ for the $L^i$. With respect
to the corresponding 
dual bases for the $({L^i})^*$), the map $\Psi$ can be given in
coordinates by
\begin{equation} \label{equ-embed-in-WP}
{\Psi}: x \mapsto \left( \left( \frac{f_{11}(x)}{h(x)}, \ldots , \frac{f_{1n_1}(x)}{h(x)}  \right)
: \cdots : \left( \frac{f_{r1}(x)}{h^r(x)} , \ldots , \frac{f_{rn_r}(x)}{h^r(x)} \right) \right)
\end{equation}
at the points $x \in X$ where $h(x) \neq 0$.

With this notation in place, we take a moment to concretely and geometrically interpret the family
$\X$ constructed in Section \ref{sec-toric-degen} as a subvariety of $W\p \times \c$; this embedding is 
determined by the map~\eqref{eq:homomorphism Atau to
  Rt}. Specifically, consider the map
$$\rho: X \times \c^* \to \X \hookrightarrow W\p \times \c = W\p(L^*, (L^2)^*,
\ldots, (L^r)^*; 1, 2, \ldots, r) \times \c$$ given by
\begin{equation} \label{equ-family}
(x, t) \mapsto \left( \left( (t^{w_{11}} \frac{f_{11}(x)}{h(x)}, \ldots, t^{w_{1n_1}}\frac{f_{1n_1}(x)}{h(x)}) : \cdots :
    (t^{w_{r1}} \frac{f_{1r}(x)}{h^r(x)}, \ldots, t^{w_{rn_r}} \frac{f_{rn_r}(x)}{h^r(x)}) \right), t \right)
\end{equation}
where $h$ is a section with $h(x) \neq 0$ (see \eqref{equ-embed-in-WP}).
The image of the family $\X$ sitting inside the weighted projective
space $W\p \times \c$ is then the closure of the
image of this map. This image is cut out by the equations
$\tilde{g}_k(x_{ij}, \tau) = 0$ for $k=1, \ldots, q$. The projection 
$\pi: \X \to \c$ of Corollary~\ref{cor-toric-degen} is then the
projection $\X \subseteq
W\p \times \c \to \c$ to the second factor.

\begin{Rem} \label{rem-zero-fiber} 
By construction, the image of $X_0 = \pi^{-1}(0)$
in the weighted projective space is cut out by the equations
$\overline{g}_k(x_{ij}) = 0$ for $1 \leq k \leq q$. The map
$\c[x_{ij}] \to \c[x_{ij}] / I_0 \cong \gr R$ is a homomorphism of
$(\n \times \z^n)$-graded algebras and hence the embedding $X_0 \hookrightarrow
W\p \times \{0\}$ is $\T=(\c^*)^n$ equivariant. 
\end{Rem}

\subsection{Embedding a weighted projective space in a large projective space} \label{subsec-embed-WP-in-big-proj}
In order to construct a K\"ahler structure on the family $\X$, it will be useful to embed the
(usually singular) weighted projective space $W\p$ into a ``large'' classical
(and hence smooth) projective space. In this section we describe
a particular such embedding. 

Let $V_1, \ldots, V_r$ be finite-dimensional vector spaces over $\c$
and $m_1, \ldots, m_r$ positive integers
as in Section~\ref{subsec:wps}. 
Let $\Sym((V_1 \times \cdots \times V_r)^*)$ denote the algebra of
polynomials on $V_1 \times \cdots \times V_r$. 
We equip $\Sym((V_1 \times \cdots \times V_r)^*) \cong \Sym(V_1^*
\times V_2^* \times \cdots \times V_r^*)$ with a grading by assigning
degree $m_i$ to any $f_i^* \in V_i^*$. 
Let $d>0$ be a positive integer. With respect to the above
grading, let ${\bf V}_d$ denote the subspace of  
$\Sym((V_1 \times \cdots \times V_r)^*)$ consisting of the elements of
total degree $d$.
From the definition of the grading it then follows that 
\begin{equation} \label{equ-Vd}
{\bf V}_d \cong \bigoplus_{\stackrel{\beta=(\beta_1, \ldots, \beta_r)
    \in \z^r_{\geq 0}}{\sum_i m_i \beta_i = d}} \bigotimes_{i=1}^r\Sym_{\beta_i}(V_i^*)
\end{equation}
where $\Sym_{\beta_i}(V_i^*)$ denotes the space of homogeneous
polynomials of degree $\beta_i$ on $V_i$. 

Given $v = (v_1, \ldots, v_r) \in V_1 \times \cdots \times V_r$, the evaluation map
$$\ell_v: p \mapsto p(v)$$ defines a linear function $\ell_v$ on $\Sym((V_1 \times \cdots \times V_r)^*)$.
It follows that the map $v \mapsto {\ell_v}_{|{\bf V}_d}$ induces a rational map
$$\Theta: W\p(V_1, \ldots, V_r) \ratmap \p({\bf V}_d^*).$$

Moreover, we have the following \cite{Dolgachev}.

\begin{Prop} \label{prop-Dolgachev}
If $d = \prod_{i=1}^r m_i$ then $\Theta$ is a morphism and an embedding.
\end{Prop}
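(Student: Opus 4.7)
The plan is to realize $\Theta$ as the projective embedding of the weighted projective space $W\p$ associated to a sufficiently positive line bundle. Viewing $W\p$ as $\Proj(A)$ where $A := \Sym(V_1^* \oplus \cdots \oplus V_r^*)$ is graded by weighted degree (each $V_i^*$ sitting in degree $m_i$), the vector space ${\bf V}_d$ is exactly the degree-$d$ piece $A_d$, and $\Theta$ is the canonical map associated to the complete linear system $|\mathcal{O}_{W\p}(d)|$. The claim then reduces to showing that $\mathcal{O}_{W\p}(d)$ is very ample when $d = \prod_i m_i$.

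First I would verify well-definedness and the morphism property. The weighted homogeneity $p(\lambda^{m_1} v_1, \ldots, \lambda^{m_r} v_r) = \lambda^d p(v_1, \ldots, v_r)$ for $p \in {\bf V}_d$ gives $\ell_{\lambda \cdot v}|_{{\bf V}_d} = \lambda^d \ell_v|_{{\bf V}_d}$, which makes $\Theta$ well-defined on the $\c^*$-quotient. To extend $\Theta$ to all of $W\p$, I would show $\ell_v|_{{\bf V}_d} \neq 0$ for every nonzero $v = (v_1, \ldots, v_r)$: picking an index $i$ with $v_i \neq 0$ and a linear form $f_i^* \in V_i^*$ with $f_i^*(v_i) \neq 0$, the polynomial $p := (f_i^*)^{d/m_i}$ belongs to ${\bf V}_d$ (since $d/m_i = \prod_{\ell \neq i} m_\ell$ is a positive integer) and satisfies $p(v) = (f_i^*(v_i))^{d/m_i} \neq 0$. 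For the closed embedding property, I would invoke the standard fact that when the Veronese subring $A^{(d)} := \bigoplus_{k \geq 0} A_{kd}$ is generated as a $\c$-algebra by its degree-one piece $A_d$, the induced morphism $\Proj(A) \cong \Proj(A^{(d)}) \hookrightarrow \p(A_d^*)$ is a closed embedding.

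The main obstacle is to prove that $A^{(d)}$ is generated in degree one when $d = \prod_i m_i$. Concretely, this reduces to the combinatorial statement that every tuple $(\alpha_1, \ldots, \alpha_r) \in \z^r_{\geq 0}$ with $\sum_i m_i \alpha_i = kd$ can be split as a sum of $k$ nonnegative tuples each of weighted sum exactly $d$; I would establish this by induction on $k$, peeling off one weighted-degree-$d$ chunk at each step via a greedy procedure that exploits the divisibility $m_i \mid d$ for every $i$. As a self-contained alternative that avoids the Veronese-generation statement, one can verify injectivity of $\Theta$ directly: the pure-power sections $(f_i^*)^{d/m_i}$ force the supports of $v$ and $w$ to agree whenever $\Theta(v) = \Theta(w)$ and pin down each $w_i/v_i$ up to a $(d/m_i)$-th root of unity, and a connectedness-and-dimension argument shows that the image of the weighted diagonal $\phi: \c^* \to (\c^*)^{|\supp(v)|}$, $\lambda \mapsto (\lambda^{m_i})_i$, coincides with the subgroup of the ambient torus annihilated by the primitive lattice $\ker(\sum_i m_i : \z^{|\supp(v)|} \to \z)$, so $\Theta$ separates $\c^*$-orbits. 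Injectivity of $d\Theta$ then follows by an analogous local computation using the same pure powers as local coordinates on the smooth locus of $W\p$.
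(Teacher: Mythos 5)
The paper does not actually prove this proposition --- it is quoted directly from Dolgachev's lecture notes on weighted projective varieties --- so there is no internal argument to compare yours against. Your overall architecture is the standard one and is essentially what the cited source does: identify ${\bf V}_d$ with the degree-$d$ piece $A_d$ of the weighted polynomial ring $A$, check that $\Theta$ is everywhere defined via the pure powers $(f_i^*)^{d/m_i}$ (this is exactly where $m_i \mid d$, hence $d=\prod_i m_i$, enters), and reduce the closed-embedding statement to degree-one generation of the Veronese subring $A^{(d)}$.

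The genuine gap is in the one step you yourself single out as the main obstacle. The combinatorial claim --- every exponent vector $\alpha$ with $\sum_i m_i\alpha_i = kd$, $k\geq 2$, admits $\beta \leq \alpha$ with $\sum_i m_i\beta_i = d$ --- is true for $d=\prod_i m_i$, but it is not established by the greedy peeling you describe. Greedy filling up to $d$ can get stuck: take weights $(m_1,m_2,m_3)=(5,3,2)$, so $d=30$, and the exponent vector $\alpha=(1,15,5)$, of weighted degree $60$. Processing weights in decreasing order accumulates $5$, then $5+24=29$, and the remaining deficit $1$ cannot be filled by the weight-$2$ variable; increasing order gets stuck at $28$. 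Yet a weighted-degree-$30$ divisor exists, namely $\beta=(0,10,0)$. The point is that $m_i\mid d$ does not make the intermediate remainders divisible by the next weight, so the lemma needs a more global argument (this is precisely the content of the statement in Dolgachev being cited). Your fallback route of separating points and tangent vectors directly does not escape this either: to show that the scalars $\lambda_i$ relating two points with the same image all arise as $\lambda_i=\lambda^{m_i}$ for a single $\lambda$, you need the exponent vectors of degree-$d$ monomials supported on $\supp(v)$ to affinely generate the full lattice slice $\{\alpha : \sum_i m_i\alpha_i = d\}$, which is the same combinatorial assertion in disguise; the dimension count you invoke only pins down the identity component of the relevant subgroup.
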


The map $\Theta$ can also be described concretely in
coordinates. Suppose $n_i = \dim_\c(V_i)$ for each $i$ and 
fix a basis for each $V_i$; let 
$(x_{i1}, \ldots, x_{in_i})$ denote the corresponding coordinates for
$V_i$ (i.e. dual vectors in $V_i^*$). 
Then, under the identification $\Sym((V_1 \times \cdots \times V_r)^*)
\cong \Sym(V_1^* \times \cdots \times V_r^*) \cong 
\c[x_{ij}; 1 \leq i \leq r, 1 \leq j \leq n_i]$, the set of monomials
\begin{equation}\label{eq:set of monomials}
\left\{x_{11}^{\alpha_{11}} \cdots x_{rn_r}^{\alpha_{rn_r}} \bigg\lvert
\sum_{i=1}^r m_i \sum_{j=1}^{n_i} 
\alpha_{ij} = d,~ \alpha_{ij} \in \z_{\geq 0} \right\}
\end{equation}
form a basis for ${\bf V}_d$. With respect to these coordinates, 
the morphism $\Theta: W\p \to \p({\bf V}_d^*)$ can be described
explicitly as 
\begin{equation} \label{equ-WP-embed}
\Theta: ((x_{11}, \ldots, x_{1n_1}): \cdots :(x_{r1}, \ldots,
x_{rn_r})) \mapsto \left(x_{11}^{\alpha_{11}} \cdots
  x_{rn_r}^{\alpha_{rn_r}} \bigg\lvert  \sum_{i=1}^r \sum_{j=1}^{n_i} m_i
  \alpha_{ij} = d,~\alpha_{ij} \in \z_{\geq 0} \right).
\end{equation}

The following will be used in Section~\ref{sec-GIT-part2}. 

\begin{Rem} \label{rem-T-equiv}
Let $\HH \cong (\c^*)^m$ be an algebraic torus. 
Suppose $\HH$ acts linearly on each $V_i$. The diagonal $\HH$-action on 
$V_1 \times \cdots \times V_r$ induces an action of $\HH$ on
$W\p(V_1, \ldots, V_r; m_1, \ldots, m_r)$ and on $\p({\bf V}_d^*)$. It
is not hard to see that $\Theta$ is $\HH$-equivariant with respect to these actions.
\end{Rem}

Consider now the special case
in Section~\ref{subsec-embed-in-WP},
so $V_k = (L_k)^*$ and $m_k = k$.  In this setting, ${\bf V}_d$ is the subspace of 
$\Sym(L \times \cdots \times L^r) \cong \Sym((L^* \times \cdots \times (L^r)^*)^*)$ consisting of homogeneous elements of 
degree $d$. 
Let $ d= r!$. Proposition \ref{prop-Dolgachev} then implies
that the composition $\Theta \circ \Psi: X \to \p({\bf V}_d^*)$ is an
embedding. Again we take a moment to describe $\Theta \circ \Psi$
explicitly in terms of coordinates. As before,
let $x \in X$ and let $h \in L$ such that $h(x) \neq 0$. Using the
same bases $\{f_{i1}, \ldots, f_{i n_i}\}$ for $L^i$ as above, 
note that any $p \in {\bf V}_d$ can be written as 
\[
p = \sum_{\stackrel{\alpha=(\alpha_{11}, \ldots, \alpha_{1n_1}, \ldots,
  \alpha_{r1}, \ldots, \alpha_{rn_r}), \alpha_{ij} \in \z_{\geq
    0}}{\sum_{i=1}^r i \left( \sum_{j=1}^{n_i} \alpha_{ij} \right) = d
  = r!}} c_\alpha f_{11}^{\alpha_{11}}
\cdots f_{1n_1}^{\alpha_{1n_1}} \cdots f_{r1}^{\alpha_{r1}} \cdots
f_{rn_r}^{\alpha_{rn_r}}.
\]
From the previous coordinate descriptions of $\Theta$ and $\Psi$ it is
now straightforward to see that $(\Theta \circ \Psi)(x) \in \p({\bf
  V}_d^*)$ can be described as the point 
\begin{equation}\label{equ-Theta-Psi}
\left( \frac{f_{11}(x)^{\alpha_{11}} \cdots
    f_{rn_r}(x)^{\alpha_{rn_r}}}{h(x)^d} \; \bigg\lvert \; \sum_{i=1}^r i
  \left( \sum_{j=1}^{n_i} \alpha_{ij} \right) = d = r! \; , \; \alpha_{ij}
  \in \z_{\geq 0} \right)
\end{equation}
on the locus where $h(x) \neq 0$. 

For each $i > 0$, the sum and product of sections give a natural linear map from $L^{\otimes i}$ onto $L^i$. Similarly
there is a natural linear map from ${\bf V}_d$ onto $L^d$. These induce embeddings $(L^{i})^* \hookrightarrow (L^{\otimes i})^*$ and 
$(L^d)^* \hookrightarrow {\bf V}_d^*$, which in turn give embeddings
of the corresponding projective spaces 
$\p((L^i)^*) \hookrightarrow \p((L^{\otimes i})^*)$ and
$j: \p((L^d)^*) \hookrightarrow \p({\bf V}_d^*)$. From
\eqref{equ-Theta-Psi} and the definition of the Kodaira map $\Phi_{L^d}$, 
it follows that the diagram
\begin{equation} \label{equ-Theta-commutative}
\xymatrix{
X \ar[r]^{\Psi} \ar[rd]_{\Phi_{L^d}} & W\p \ar[r]^{\Theta} & \p({\bf V}_d^*) \\
& \p((L^d)^*) \ar[ru]_{j} & \\
}
\end{equation}
commutes.

The following will be used in Section~\ref{sec-GIT-part2}.

\begin{Rem} \label{rem-T-line-bundle}
Suppose $\HH$ is an algebraic torus. Suppose $\HH$ acts on $X$ and that
the very ample line bundle $\lb$ is 
$\HH$-linearized. Then, for any $k>0$, the spaces
$(L^k)^*$ are $\HH$-modules and hence $\HH$ also acts on $W\p$ and
$\p({\bf V}_d^*)$ as in Remark \ref{rem-T-equiv}.
It can be checked that 
all the maps in the diagram \eqref{equ-Theta-commutative} are $\HH$-equivariant for the corresponding $\HH$-actions.
\end{Rem}

\subsection{A K\"ahler structure on $\p({\bf V}_d^*)$} \label{subsec-Kaehler}
In order to define a K\"ahler structure $\tilde{\omega}$ on the family
$\X$, we first choose an appropriate
K\"ahler structure on $\p({\bf V}_d^*)$. 
Recall that the K\"ahler structure $\omega$ on $X$ is defined as the pull-back of a
Fubini-Study form $\Omega_H$ on $\p(L^*)$ with respect to a fixed Hermitian product $H$ on $L^*$. 
In this section we wish to construct a K\"ahler structure on the 
projective space $\p({\bf V}_d^*)$ such that its pull-back under
the embedding $\Theta \circ \Psi$ coincides with the form $\omega$ on $X$.

For this purpose, it is convenient to extend the diagram
\eqref{equ-Theta-commutative}. 
We maintain the notation of Sections~\ref{subsec-embed-WP-in-big-proj} and \ref{subsec-embed-in-WP}.
Suppose $\beta = (\beta_1, \ldots, \beta_r) \in \z^r_{\geq 0}$ where
$\sum_i i \cdot \beta_i = d$, as in
Section~\ref{subsec-embed-WP-in-big-proj}. Identifying $L^{\otimes d}
\cong \bigotimes_{i=1}^r L^{\otimes i \cdot \beta_i}$ in the natural way,
consider the map 
\begin{equation}\label{eq:L tensor d to Sym beta_i}
  L^{\otimes d} \to \bigotimes_{i=1}^r \Sym_{\beta_i}(L^i)
  \end{equation}
induced by taking the natural map $L^{\otimes i \cdot \beta_i} \to
\Sym_{\beta_i}(L^i)$, induced by the product of sections, on each
factor. Taking the direct sum of~\eqref{eq:L tensor d to Sym beta_i}
over all such $\beta$ we obtain a natural map
\begin{equation}
  \label{eq:sum beta L tensor d to Vd}
  \bigoplus_{\stackrel{\beta = (\beta_1, \ldots, \beta_r) \in
      \z^r_{\geq 0}}{\sum_{i=1}^r i \cdot \beta_i = d}} L^{\otimes d}
  \to {\bf V}_d
\end{equation}
using the identification~\eqref{equ-Vd}. Since~\eqref{eq:sum beta L
  tensor d to Vd} is induced from taking the product of sections, it
is straightforward to see that~\eqref{eq:sum beta L tensor d to Vd}
fits into a commutative diagram 
$$
\xymatrix{
\bigoplus_{\stackrel{\beta = (\beta_1, \ldots, \beta_r) \in
      \z^r_{\geq 0}}{\sum_{i=1}^r i \cdot \beta_i = d}} L^{\otimes d} \ar[r] \ar[d] & {\bf V}_d \ar[d]\\
L^{\otimes d} \ar[r] & L^d.\\
}
$$
where the left vertical map takes the sum of the components in the
direct sum, the top horizontal arrow is~\eqref{eq:sum beta L tensor d
  to Vd}, and the other two arrows are the natural ones induced from
the sum and product of sections. It is clear that all four maps are
surjective. 

Taking duals, we get the diagram of inclusions
\begin{equation} \label{equ-comm-diag-duals}
\xymatrix{
{\bf V}_d^* \ar@{^{(}->}[r] & \bigoplus_{\stackrel{\beta = (\beta_1, \ldots, \beta_r) \in
      \z^r_{\geq 0}}{\sum_{i=1}^r i \cdot \beta_i = d}} (L^{\otimes d})^*\\
(L^d)^* \ar@{^{(}->}[r] \ar@{^{(}->}[u] & (L^{\otimes d})^* \ar@{^{(}->}[u]\\
}
\end{equation}
where the right vertical map is given by the diagonal inclusion $v
\mapsto (v,v, \ldots, v)$. 
Considering the induced morphisms between the corresponding projective spaces and putting this together with 
\eqref{equ-Theta-commutative}, we obtain the commutative diagram
\begin{equation} \label{equ-big-comm-diag}
\xymatrix{
X \ar[d]^{\Phi_L} 
\ar[r]^{\Psi} \ar[rrd]_{\Phi_{L^d}} & W\p \ar[r]^{\Theta} & \p({\bf V}_d^*) \ar[r] & \p(\bigoplus_\beta (L^{\otimes d})^*)\\
\p(L^*) & & \p((L^d)^*) \ar[r] \ar[u] & \p((L^{\otimes d})^*) \ar[u]\\
}
\end{equation}
where all the maps are embeddings. 

We will use the Hermitian product $H$ on $L^*$ to define a Hermitian
product ${\bf H}$ on the vector space $\bigoplus_\beta (L^{\otimes
  d})^*$.  First we recall the following. Let $H_1$, $H_2$ be
Hermitian products on vector spaces $V_1$, $V_2$ respectively. Then
$H_1 \oplus H_2$ and $H_1 \otimes H_2$, defined respectively by the
formulas 
\begin{eqnarray} \label{equ-sum-prod-Hermitian}
(H_1 \oplus H_2)((v_1, v_2), (w_1, w_2)) &=& H_1(v_1, w_1) + H_2(v_2,
w_2) \cr 
(H_1 \otimes H_2)(v_1 \otimes v_2, w_1 \otimes w_2) &=& H_1(v_1, w_1)H_2(v_2, w_2),
\end{eqnarray}
for all $v_i, w_i \in V_i$, yield well-defined Hermitian products on
$V_1 \oplus V_2$ and $V_1 \otimes V_2$ respectively.
Recalling also that $(L^{\otimes d})^* \cong (L^*)^{\otimes d}$, we
now define 
\begin{equation}
  \label{eq:def big H}
  {\bf H} := \bigoplus_{\stackrel{\beta = (\beta_1, \ldots, \beta_r) \in
      \z^r_{\geq 0}}{\sum_{i=1}^r i \cdot \beta_i = d}} H^{\otimes d} 
\end{equation}
on $\bigoplus_\beta L^{\otimes d}$.  By restriction ${\bf H}$ induces
Hermitian products on the other three spaces in the diagram
\eqref{equ-comm-diag-duals}.  The 
Hermitian product on $(L^{\otimes d})^* \cong (L^*)^{\otimes d}$
induced from the diagonal inclusion $v \mapsto (v,v,\ldots, v)$ 
coincides with the Hermitian product $N \cdot H^{\otimes d}$ where
here $N$ denotes the number of components in the direct sum
$\bigoplus_\beta L^{\otimes d}$.  We now define 
\begin{equation}
\label{eq:def big Omega}
{\bf \Omega} := \frac{1}{N \cdot d} \Omega_{\bf H}
\end{equation} where $\Omega_{\bf H}$ is the K\"ahler form on
$\p(\bigoplus_\beta (L^{\otimes d})^*)$ corresponding to the 
Hermitian product ${\bf H}$. 

The normalization factor in~\eqref{eq:def big Omega} is chosen
precisely so that the following holds. 

\begin{Th} \label{th-Kaehler-form}
The pull-back of the form ${\bf \Omega}$ on $\p(\bigoplus_\beta
(L^{\otimes d})^*)$ to $X$, via the maps in~\eqref{equ-big-comm-diag}, 
coincides with the original K\"ahler form $\omega$ with respect to the
Hermitian product $H$ on $L^*$. 
\end{Th}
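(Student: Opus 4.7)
The strategy is to pull $\mathbf{\Omega}$ back along the right-hand path of diagram~\eqref{equ-big-comm-diag}, namely $X \xrightarrow{\Phi_{L^{\otimes d}}} \p((L^{\otimes d})^*) \hookrightarrow \p(\bigoplus_\beta (L^{\otimes d})^*)$, and to verify that the normalization $1/(Nd)$ chosen in~\eqref{eq:def big Omega} is precisely what is needed to recover $\omega$. The argument rests on three ingredients, each of which is either a general property of Fubini-Study forms or a local coordinate computation.

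First, I would record the scale invariance $\Omega_{cH'} = \Omega_{H'}$ for any Hermitian form $H'$ and positive real scalar $c$; this is immediate from the moment map formula~\eqref{equ-moment-projective-space} (both numerator and denominator scale by $c$), or equivalently from the K\"ahler potential description $\Omega_{H'} = \tfrac{i}{2}\partial\bar\partial \log H'(\tilde x,\tilde x)$, since $\partial\bar\partial$ annihilates the additive constant $\log c$. Second, the diagonal embedding $\iota:(L^{\otimes d})^* \hookrightarrow \bigoplus_\beta (L^{\otimes d})^*$, $v \mapsto (v,\ldots,v)$ appearing as the right vertical map of~\eqref{equ-comm-diag-duals}, pulls $\mathbf{H}$ back to $N\cdot H^{\otimes d}$ directly from the definition~\eqref{eq:def big H}, and combined with scale invariance this gives $\iota^*\Omega_{\mathbf{H}} = \Omega_{H^{\otimes d}}$ on $\p((L^{\otimes d})^*)$.

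The third and most computational ingredient is the identity $\Phi_{L^{\otimes d}}^*\Omega_{H^{\otimes d}} = d\cdot \omega$. Fix a local non-vanishing section $h\in L$ and a basis $\{f_i\}$ of $L$ that is orthonormal with respect to $H$. In the corresponding dual basis of $(L^*)^{\otimes d} = (L^{\otimes d})^*$, the coordinates of $\Phi_{L^{\otimes d}}(x)$ are $f_{i_1}(x)\cdots f_{i_d}(x)/h(x)^d$, so
\[
\log \left\lVert \Phi_{L^{\otimes d}}(x) \right\rVert^2_{H^{\otimes d}} \;=\; \log \left(\sum_i |f_i(x)/h(x)|^2\right)^{\!d} \;=\; d\cdot \log \sum_i |f_i(x)/h(x)|^2,
\]
and applying $\tfrac{i}{2}\partial\bar\partial$ (using that $\partial\bar\partial \log|h|^2 = 0$ since $h$ is holomorphic away from its zero set), together with the analogous local formula $\omega = \Phi_L^*\Omega_H = \tfrac{i}{2}\partial\bar\partial \log \sum_i |f_i/h|^2$ obtained by the same computation applied to $\Phi_L$, yields $d\cdot \omega$.

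Concatenating the three ingredients along the composition $\iota \circ \Phi_{L^{\otimes d}}$, and multiplying by the prefactor $1/(Nd)$, produces the identity claimed in the theorem. The chief difficulty is the careful bookkeeping of Hermitian forms and normalization constants through the commutative diagram~\eqref{equ-big-comm-diag}; the underlying differential-geometric computations are otherwise routine. As a sanity check, one can also verify the identity using the left-hand path through $\Phi_L$ and $\p(L^*)$ in~\eqref{equ-big-comm-diag} together with the explicit description~\eqref{equ-Theta-Psi} of $\Theta \circ \Psi$, which must yield the same pullback by commutativity of the diagram.
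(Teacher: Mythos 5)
Your strategy — pull back along the right-hand path of~\eqref{equ-big-comm-diag}, invoke scale-invariance of Fubini-Study forms at the diagonal, and compute $\Phi_{L^{d}}^*\Omega_{H^{\otimes d}} = d\,\omega$ by a local potential — is the same route the paper takes. The one real addition is that you spell out the potential computation, whereas the paper simply cites \cite[Lemma 7.9]{KKh3} for the factor of $d$; that is a useful piece of transparency. The small subtlety in your step 3 worth making explicit is that what you denote $\Phi_{L^{\otimes d}}$ is really $j\circ\Phi_{L^d}$ from the diagram, i.e.\ it is the functional on $L^{\otimes d}$ obtained from $\ell_{x,d}\in(L^d)^*$ via the transpose of the surjection $L^{\otimes d}\twoheadrightarrow L^d$; your coordinate computation implicitly relies on this identification, and it is worth stating.

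There is, however, a genuine arithmetic gap in the final assembly. As you correctly observe, $\iota$ pulls ${\bf H}$ back to $N\cdot H^{\otimes d}$ and, by scale-invariance of the Fubini-Study form in the Hermitian metric, $\iota^*\Omega_{\bf H}=\Omega_{N H^{\otimes d}}=\Omega_{H^{\otimes d}}$. Combined with $\Phi_{L^{d}}^*\Omega_{H^{\otimes d}}=d\,\omega$, the pullback of ${\bf \Omega}=\frac{1}{Nd}\Omega_{\bf H}$ along the right-hand path is $\frac{1}{Nd}\cdot d\,\omega = \frac{1}{N}\omega$, not $\omega$: the factor of $N$ in the normalization~\eqref{eq:def big Omega} is not cancelled by anything in your three ingredients. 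Your last paragraph asserts that multiplying by $1/(Nd)$ ``produces the identity claimed'' but does not actually show the $N$ cancel; given scale-invariance (which you correctly use), it does not. You should either (i) rework the normalization so that the prefactor is $1/d$ and show the theorem is stated with that constant, or (ii) identify where a compensating factor of $N$ enters. (For what it is worth, the paper's own proof appears to contain the same tension: its closing sentence asserts that the pullback of the form associated to $NH^{\otimes d}$ equals $N\cdot\Omega_{H^{\otimes d}}$, which contradicts the scale-invariance you rely on; so this is a discrepancy worth flagging rather than glossing over.)
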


\begin{proof}
By what was said above, the pull-back of ${\bf \Omega}$ to $\p((L^d)^*)$ is equal to 
$(1/N \cdot d) \Omega_{H^{\otimes d}}$, where $\Omega_{H^{\otimes d}}$
denotes the K\"ahler form on $\p((L^d)^*)$ induced from
$H^{\otimes d}$. 
{The theorem now follows from the fact that 
the pull-back of $\Omega_{H^{\otimes d}}$ to $X$ under the Kodaira map $\Phi_{L^d}$
is equal to $d \cdot \omega$ (see \cite[Lemma 7.9]{KKh3}).} Recall $\omega$ is the pullback of
$\Omega_H$ to $X$ under $\Phi_L$. Since $\Omega_H$ pulls back to
$\p((L^d)^*)$ to equal $N \cdot \Omega_{H^{\otimes d}}$, the claim
follows from~\eqref{eq:def big Omega}. 
\end{proof}

Finally, we address the invariance of the K\"ahler form ${\bf \Omega}$ under a torus action.
Let $\T \cong (\c^*)^n$ and let $T = (S^1)^n$ denote the usual compact
torus in $\T$.
Suppose $\T$ acts linearly on each space 
$(L^i)^*$ such that the Hermitian product
$H^{\otimes i}$ on $(L^i)^*$ is $T$-invariant.
A natural geometric situation when this arises is 
when $\T$ acts on $X$ and $\lb$ is a $\T$-linearized line bundle
equipped with a $T$-invariant Hermitian form.  
Note that the $\T$-actions on the $(L^i)^*$ induce a $\T$-action on
${\bf V}_d^*$.

\begin{Prop} \label{prop-inv-Hermitian-V_d}
{Suppose $T$ acts linearly on the $L^i$ and suppose for each $i$ the $T$-action on $L^i$ preserves
$H^{\otimes i}$. Then the Hermitian product ${\bf H}$ on ${\bf V}_d^*$ is
$T$-invariant. In particular, 
the corresponding K\"ahler form ${\bf \Omega}$ on $\p({\bf V}_d^*)$ is $T$-invariant.} 
\end{Prop}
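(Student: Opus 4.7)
The plan is to verify the $T$-invariance of ${\bf H}$ on ${\bf V}_d^*$ and then deduce the $T$-invariance of ${\bf \Omega}$ from standard facts about Fubini--Study forms built from invariant Hermitian products. Specializing the hypothesis to $i = 1$ gives a $T$-action on $L = L^1$ preserving $H$. Dualizing and taking the $d$-fold tensor power then induces a $T$-action on $(L^*)^{\otimes d} \cong (L^{\otimes d})^*$ preserving $H^{\otimes d}$, since the tensor-product formula~\eqref{equ-sum-prod-Hermitian} shows that tensor powers of invariant Hermitian products are again invariant under the diagonal action. Taking the direct sum over the finitely many indices $\beta$ appearing in~\eqref{eq:def big H} then shows that ${\bf H}$ is $T$-invariant on the ambient space $\bigoplus_\beta (L^{\otimes d})^*$.

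Next I would check that the embedding ${\bf V}_d^* \hookrightarrow \bigoplus_\beta (L^{\otimes d})^*$ at the top of~\eqref{equ-comm-diag-duals} is $T$-equivariant. This embedding is dual to the surjection~\eqref{eq:sum beta L tensor d to Vd}, which is assembled from the product-of-sections maps $L^{\otimes k} \to L^k$ on each tensor factor. The hypothesis that $T$ acts linearly on each $L^i$, combined with the natural $k$-fold tensor action on $L^{\otimes k}$, makes each product-of-sections map $T$-equivariant, and hence so is~\eqref{eq:sum beta L tensor d to Vd} and its dual. Thus ${\bf V}_d^*$ sits inside $\bigoplus_\beta (L^{\otimes d})^*$ as a $T$-stable subspace, and the restriction of the $T$-invariant Hermitian product ${\bf H}$ to this subspace is still $T$-invariant. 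This establishes the first claim of the proposition.

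For the final statement, I would invoke the standard fact that the Fubini--Study K\"ahler form on $\p(V)$ attached to a Hermitian product $H'$ is preserved by any linear group action under which $H'$ is invariant, because such an action descends to an action by biholomorphic isometries on $\p(V)$. Applied to the restricted action of $T$ on ${\bf V}_d^*$ with invariant Hermitian product ${\bf H}$, this gives the $T$-invariance of the associated Fubini--Study form, and hence of its constant rescaling ${\bf \Omega}$ on $\p({\bf V}_d^*)$. The only obstacle to the argument is purely organizational: ensuring that the various $T$-actions appearing on the spaces in~\eqref{equ-comm-diag-duals} and~\eqref{equ-big-comm-diag} are compatible with the natural structure maps between them, which reduces to the $T$-equivariance of the product-of-sections maps noted above.
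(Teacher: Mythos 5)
Your proof takes a genuinely different route from the paper's, but it has a gap at a crucial step. The paper works intrinsically on ${\bf V}_d^*$: it picks, for each $i$, an orthonormal basis of $T$-weight vectors $\{f_{ij}^*\}$ for $(L^i)^*$ (possible because $T$ preserves $H^{\otimes i}$), and then observes that the resulting monomial basis ${\bf B}$ of ${\bf V}_d^*$ consists of $T$-weight vectors and is orthogonal for ${\bf H}$; the existence of such a basis immediately gives $T$-invariance of ${\bf H}$. You instead try to deduce invariance by \emph{restriction}: build a $T$-invariant Hermitian product on the ambient space $\bigoplus_\beta (L^{\otimes d})^*$ from the $T$-action on $L$ alone, and then argue that the embedding ${\bf V}_d^* \hookrightarrow \bigoplus_\beta (L^{\otimes d})^*$ is $T$-equivariant.

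The problem is in the claimed $T$-equivariance of that embedding. You assert that the product-of-sections map $L^{\otimes k} \to L^k$ is $T$-equivariant "since $T$ acts linearly on each $L^i$ and $L^{\otimes k}$ carries the natural tensor action.'' But this is a non sequitur: the hypothesis of the proposition gives a $T$-action on each $L^i$ separately, preserving the relevant Hermitian product, with no assumption that the action on $L^k$ is the one \emph{induced} from $L$ via products of sections. Without that assumption there is no reason for the map $L^{\otimes k} \to L^k$ to intertwine the actions, and hence no reason for the dual embedding to be $T$-equivariant. In fact, in the very application this proposition is invoked for (Proposition~\ref{prop-omega-t}(b)), the $T$-action on $L^i$ is defined by declaring the SAGBI basis $\{f_{ij}\}$ to be weight vectors with weights $u_{ij} = v(f_{ij})$; because multiplication in $R$ only preserves the value of the \emph{leading} term, a product $f_{1a}f_{1b}$ generically decomposes over basis elements of $L^2$ with distinct valuations (hence distinct weights), so the product map is \emph{not} equivariant for these actions. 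Your acknowledgement that "the only obstacle is purely organizational'' understates the issue: the compatibility you need is an additional hypothesis, not a bookkeeping check, and it fails in the intended application. The paper's direct basis argument is designed exactly to sidestep this; you would need to replace your restriction-from-ambient argument by an intrinsic one on ${\bf V}_d^*$ along those lines.
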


\begin{proof}
Let $T$ act on a vector space $V$ equipped with a Hermitian product $H$. Then 
the $T$-action preserves $H$ if and only if there exists an
orthonormal basis of $V$ consisting of 
$T$-weight vectors.
For each $i$, let $\{ f_{i1}^*, \ldots, f_{in_i}^*\}$ be a basis
consisting of $T$-weight vectors. 
By assumption, the $T$-action preserves $H^{\otimes i}$, so we may
assume without loss of generality that the basis is orthonormal.
Under the identification $\Sym((L^1)^*\otimes (L^2)^* \otimes \cdots
\otimes (L^r)^*) \cong \c[f_{ij}; 1 \leq i \leq r, 1 \leq j \leq n_i]$
(compare~\eqref{eq:set of monomials} and~\eqref{equ-WP-embed}), the
set of vectors 
\begin{equation} \label{equ-basis-V_d}
{\bf B} = \left\{ \prod_{i, j} (f_{ij}^*)^{\alpha_{ij}} \, \bigg\lvert  \, \alpha=(\alpha_{ij}); 
\sum_{i=1}^r i \cdot \sum_{j=1}^{n_i} \alpha_{ij} = d; \alpha_{ij} \in
\z_{\geq 0}) \right\}
\end{equation}
is a basis of $T$-weight vectors for ${\bf V}_d^*$. 
Moreover, from the definition of ${\bf H}$, 
it follows that ${\bf B}$ is an orthonormal basis, as desired. 
\end{proof}

\subsection{K\"ahler structure on the family $\X$} \label{subsec-Kaehler-family}
We are finally prepared to construct the K\"ahler structure on the
toric degeneration $\X$ constructed in Section~\ref{sec-toric-degen}. 
Recall that $X$ is a smooth projective variety equipped with 
$\lb$ a very ample line bundle and we set $L = H^0(X,\lb)$ and let $R$
denote its
homogeneous coordinate ring. Let $v$ be a valuation with
one-dimensional leaves on $\c(X)$ and assume that the associated
semigroup $S = S(R)$ is finitely
generated. 
Also let $\Delta = \Delta(R)$ denote the associated Newton-Okounkov body. 
Since $S$ is finitely generated, $\Delta$ is a rational polytope.

In Section \ref{sec-toric-degen} we described a flat family $\X$ such that general fibers $X_z$, $z \neq 0$, 
are isomorphic to $X$ and the central fiber $X_0$ is the (possibly non-normal) toric variety associated to the semigroup $S$. 
Recall from Section~\ref{subsec-embed-in-WP} (in particular the map
$\rho$ described in~\eqref{equ-family}) that the family $\X$ lies in $W\p \times \c$ where the weighted projective space $W\p$ is
$W\p(L^*, \ldots, (L^r)^*; 1, \ldots, r)$ for some $r>0$. By embedding
$W\p$ in a usual projective space $\p({\bf V}_d^*)$ as in Section
\ref{subsec-embed-WP-in-big-proj}, we may therefore think of the family
$\X$ as a subvariety in 
the smooth variety $\p({\bf V}_d^*) \times \c$. Let $\Omega$ denote
the K\"ahler structure on $\p({\bf V}_d^*)$ constructed in
Section~\ref{subsec-Kaehler}. As in Part \ref{part-1}
we equip $\p({\bf V}_d^*) \times \c$ with the product symplectic structure
$\Omega \times \left( \frac{i}{2} dz \wedge d\bar{z} \right)$ where
$z$ is the complex parameter on the $\c$ factor and so $\frac{i}{2} dz
\wedge d\bar{z}$ is the standard K\"ahler form on $\c$. We denote
by $\tilde{\omega}$ (respectively $\omega_z$) the restriction of this product structure to the
smooth locus of the family $\X$ (respectively the fiber $X_z$). 
For $z \neq 0$, $(X_z, \omega_z)$ is a
smooth K\"ahler manifold. When we write $\omega_0$, we mean the
K\"ahler form on the smooth locus of $X_0$. 

Recall that the construction of the family $\X$ involved a choice of 
elements $\{f_{ij}\}$ in $R$, satisfying properties
(a)-(c) in Definition \ref{def-Khovanskii}. 
The following lemma proves that we may also assume, without loss of
generality, that in addition to the properties (a)-(c), the collection
$\{f_{ij}\}$ is compatible with the choice of a Hermitian metric. 

\begin{Lem} \label{lem-valuation-orthonomal}
Let $v: \c(X) \setminus \{0\} \to \z^n$ be a valuation with one-dimensional leaves.
Let $V \subset \c(X)$ be a finite-dimensional vector subspace 
and let $H$ be a Hermitian product on the dual space $V^*$. Then there exists a basis $\{b_1, \ldots, b_k\}$ 
of $V$ such that 
\begin{enumerate} 
\item the corresponding dual basis $\{b_1^*, \ldots, b_k^*\}$ is orthonormal with respect to $H$ and
\item $v(b_i) \neq v(b_j)$ for all $i \neq j$, i.e., 
$v(V \setminus \{0\}) = \{v(b_1), \ldots, v(b_k)\}$. 
\end{enumerate}
\end{Lem}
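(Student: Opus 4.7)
The plan is to construct the basis by Gram-Schmidt applied to a flag of $V$ coming from the valuation. First I would set up the flag. By Proposition~\ref{prop-val-dim} the set $v(V \setminus \{0\})$ has exactly $k = \dim_\c V$ elements; list them in decreasing order $u_1 > u_2 > \cdots > u_k$. For each $i$ define
\[
F_i := \{f \in V : v(f) \geq u_i\} \cup \{0\}.
\]
Axioms (a)--(c) of Definition~\ref{def-valuation} ensure each $F_i$ is a $\c$-subspace of $V$, and the one-dimensional leaves property forces the successive quotients $F_i / F_{i-1}$ to be one-dimensional. Hence $0 \subsetneq F_1 \subsetneq F_2 \subsetneq \cdots \subsetneq F_k = V$ is a complete flag with $\dim F_i = i$.

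Next I would transport $H$ to $V$. The Hermitian product $H$ on $V^*$ induces, in the standard way, a dual Hermitian product $H^\vee$ on $V$ with the property that a basis $(b_1, \ldots, b_k)$ of $V$ is orthonormal with respect to $H^\vee$ if and only if its dual basis $(b_1^*, \ldots, b_k^*)$ is orthonormal with respect to $H$. Thus condition (1) in the statement of the lemma is equivalent to $H^\vee$-orthonormality of $(b_1, \ldots, b_k)$.

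Then I would apply Gram-Schmidt. Choose any $f_i \in F_i \setminus F_{i-1}$; then $(f_1, \ldots, f_k)$ is a basis of $V$ with $v(f_i) = u_i$, and $\{f_1, \ldots, f_i\}$ spans $F_i$. Apply the Gram-Schmidt orthonormalization to $(f_1, \ldots, f_k)$ with respect to $H^\vee$ to produce an orthonormal basis $(b_1, \ldots, b_k)$. Since Gram-Schmidt is triangular, each $b_i$ has the form $b_i = c_i f_i + g_i$ with $c_i \in \c^*$ and $g_i \in F_{i-1}$, so $\{b_1, \ldots, b_i\}$ still spans $F_i$.

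Finally I would verify condition (2) using the non-Archimedean property. Since $g_i \in F_{i-1}$ we have $v(g_i) \geq u_{i-1} > u_i = v(c_i f_i)$. Axiom (b) of Definition~\ref{def-valuation} combined with this strict inequality yields $v(b_i) = \min(v(c_i f_i), v(g_i)) = u_i$. Hence the values $v(b_1), \ldots, v(b_k)$ are precisely $u_1, \ldots, u_k$ and are pairwise distinct, which together with Proposition~\ref{prop-val-dim} gives $v(V \setminus \{0\}) = \{v(b_1), \ldots, v(b_k)\}$. The only real point to watch is that Gram-Schmidt truly preserves the flag (which is automatic from its triangular nature) and that the Hermitian product on $V^*$ is correctly dualized so that dual-orthonormality becomes an ordinary orthonormality condition on $V$; both are routine pieces of linear algebra, so I do not expect any serious obstacle.
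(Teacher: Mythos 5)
Your proposal is correct and follows essentially the same route as the paper: build the flag $V_{\geq a_1} \subsetneq \cdots \subsetneq V_{\geq a_k} = V$ from the $k = \#v(V\setminus\{0\})$ values given by Proposition~\ref{prop-val-dim}, dualize $H$ to a Hermitian product on $V$, and inductively extend orthonormal bases along the flag (your explicit Gram--Schmidt is just this inductive extension). Your extra verification of condition (2) via the ultrametric inequality is fine, though it is even simpler to note that $b_i \in V_{\geq a_i}\setminus V_{\geq a_{i-1}}$ already forces $v(b_i)=a_i$.
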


\begin{proof}
From Proposition~\ref{prop-val-dim} we know that $\# v(V \setminus
\{0\}) = \dim_\c V$. Let $k=\dim_\c V$ and let 
$\{a_1 > \cdots > a_k\} = v(V \setminus \{0\})$, listed in decreasing
order. 
For $a \in \z^n$ let $V_{ \geq a}  := \{ x \mid v(x) \geq a\} \cup \{0\}$. This is a subspace of $V$.  We have a flag of subspaces
$$\{0\} \subsetneqq V_{\geq a_1} \subsetneqq \cdots \subsetneqq V_{\geq a_k} =
V.$$ 
Using the Hermitian product on $V^*$, we define an identification
$\phi: V^* \to V$ which thus also equips $V$ with a Hermitian product
by pullback via $\phi$. In such a situation an orthonormal basis of
$V$ is identified via $\phi$ with its own dual basis in $V^*$, which
is also orthonormal. Thus, the desired basis can be constructed by
finding an orthonormal basis of $V_{\geq a_1}$, then inductively an
orthonormal basis for $V_{\geq a_s}$ extending the given one on
$V_{\geq a_{s-1}}$. 
\end{proof}

From this lemma it follows that we may assume that our Khovanskii basis 
$\{f_{ij}\}$ has the property that, for each fixed $i, 1 \leq i \leq
r$, the collection $\{f_{ij}\}_{1 \leq j \leq n_i}$ is a basis for
$L^i$ such that its dual basis $\{f^*_{i1}, \ldots, f^*_{i n_i}\}$ is
orthonormal in $(L^i)^*$, with respect to the Hermitian product on
$(L^i)^*$ induced from $H^{\otimes i}$. 

Now let the torus $\T$ act linearly on each $(L^i)^*$ such that every
$f_{ij}^*$ is a $T$-weight vector with weight $u_{ij} =
v(f_{ij})$. Note this corresponds to the action on $W\p(L^*,
(L^2)^*, \ldots, (L^r)^*; 1,2, \ldots, r)$ (see Remark \ref{rem-T-equiv}). 
From the orthogonality of the basis $\{f_{i1}^*, \ldots,
f_{in_i}^*\}$ it follows that the induced action of $T$ on
$(L^i)^*$ preserves the Hermitian product. 

Recall that for $0 \neq z \in \c$, $\rho_z$ denotes the 
restriction of the embedding $\rho: X \times \c^* \to \X \subset W\p
\times \c$ given in
\eqref{equ-family} to $X \times \{z\}$. 

\begin{Prop} \label{prop-omega-t} We have:
\begin{itemize}
\item[(a)] Under the isomorphism $\rho_1: X \to X_1 \subset \X$, the
  pullback $\rho_1^* \omega_1$ of the K\"ahler form $\omega_1$ on 
$X_1$ is equal to the original K\"ahler form $\omega$ on $X$.
\item[(b)] The K\"ahler form $\omega_0$ on the (smooth locus of) the $\T$-toric variety $X_0 \subset \p({\bf V}_d^*) \times \{0\} \cong \p({\bf V}_d^*)$ 
is invariant under the $T$-action.
\end{itemize}
\end{Prop}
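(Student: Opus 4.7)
The plan is to observe that both (a) and (b) follow by carefully tracing through the constructions of Section \ref{sec-weighted-proj} (in particular Theorem \ref{th-Kaehler-form} and Proposition \ref{prop-inv-Hermitian-V_d}), together with the explicit formula \eqref{equ-family} for the embedding $\rho: X \times \c^* \to \X$. The proposition is really a corollary that assembles pieces already in place; the main task is bookkeeping rather than new estimates. Throughout, since the fiber $X_z$ sits inside $\p({\bf V}_d^*) \times \{z\}$ for fixed $z$, the product K\"ahler form $\Omega \times \omega_{std}$ on $\p({\bf V}_d^*) \times \c$ restricts on each fiber to $\Omega$ alone: the $\omega_{std}$ piece vanishes because the $\c$-coordinate is constant along the fiber. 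Hence $\omega_z = \Omega|_{X_z}$ for every $z$.

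For part (a), the key observation is that substituting $t=1$ into the formula \eqref{equ-family} for $\rho$ yields precisely the map $x \mapsto (\Psi(x), 1)$, where $\Psi: X \to W\p$ is the embedding of Section \ref{subsec-embed-in-WP}. Composing with the further embedding $\Theta: W\p \hookrightarrow \p({\bf V}_d^*)$ of Section \ref{subsec-embed-WP-in-big-proj} and projecting to $\p({\bf V}_d^*)$, we see $\rho_1$ agrees with $\Theta \circ \Psi$ on the first factor. Using the observation above,
\[
\rho_1^* \omega_1 \;=\; \rho_1^* (\Omega|_{X_1}) \;=\; (\Theta \circ \Psi)^* \Omega.
\]
Now the form $\Omega$ on $\p({\bf V}_d^*)$ is the restriction of $\mathbf{\Omega}$ on $\p(\bigoplus_\beta (L^{\otimes d})^*)$ under the inclusion appearing in diagram \eqref{equ-big-comm-diag}. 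Thus the commutativity of \eqref{equ-big-comm-diag} combined with Theorem \ref{th-Kaehler-form} immediately identifies this pullback with the original K\"ahler form $\omega$ on $X$.

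For part (b), the same observation gives $\omega_0 = \Omega|_{X_0}$ on the smooth locus of $X_0$, so it is enough to show (i) $\Omega$ on $\p({\bf V}_d^*)$ is $T$-invariant, and (ii) the embedding $X_0 \hookrightarrow \p({\bf V}_d^*)$ is $T$-equivariant. For (ii), Remark \ref{rem-zero-fiber} states that $X_0 \hookrightarrow W\p$ is $\T$-equivariant, and Remark \ref{rem-T-equiv} gives the $\T$-equivariance of $W\p \hookrightarrow \p({\bf V}_d^*)$; these combine to give the desired $T$-equivariance. For (i), the discussion preceding the proposition arranges the $T$-action on each $(L^i)^*$ so that $\{f_{i1}^*, \ldots, f_{in_i}^*\}$ is an orthonormal basis of $T$-weight vectors, which exactly means the induced Hermitian product $H^{\otimes i}$ on $(L^i)^*$ is $T$-invariant. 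Proposition \ref{prop-inv-Hermitian-V_d} then applies and gives $T$-invariance of $\mathbf{H}$ on ${\bf V}_d^*$, hence of the corresponding K\"ahler form $\Omega$.

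The only substantive point needing care — and the closest thing to an obstacle — is keeping straight the distinction between the form $\mathbf{\Omega}$ on the large projective space $\p(\bigoplus_\beta (L^{\otimes d})^*)$ and its restriction (still denoted $\Omega$ in Section \ref{subsec-Kaehler-family}) to $\p({\bf V}_d^*)$, together with the fact that pulling back along a fiber inclusion in $\p({\bf V}_d^*) \times \c$ discards the standard K\"ahler factor on $\c$. Once these identifications are made, both parts become direct applications of the already-established Theorem \ref{th-Kaehler-form} and Proposition \ref{prop-inv-Hermitian-V_d}.
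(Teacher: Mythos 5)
Your proof is correct and follows essentially the same route as the paper's (much terser) proof: identify $\rho_1$ with $\Psi$ via setting $t=1$ in \eqref{equ-family} and invoke Theorem \ref{th-Kaehler-form} for (a), and apply Proposition \ref{prop-inv-Hermitian-V_d} together with the $T$-equivariance of the embeddings for (b). The extra bookkeeping you supply (the fiber restriction discarding the $\c$-factor, and the equivariance of $X_0 \hookrightarrow \p({\bf V}_d^*)$) is exactly what the paper leaves implicit.
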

\begin{proof}
The embedding $$\rho_1: X  \hookrightarrow W\p \times \{1\}$$ coincides with the embedding 
$\Psi: X \to W\p$ in diagram \eqref{equ-Theta-commutative}. This proves (a).
Part (b) follows from Proposition \ref{prop-inv-Hermitian-V_d} where $\T$ acts on each space $L^i$ in such a way that the $f_{ij}$
are weight vectors with weights $u_{ij}$.
\end{proof}

\section{Integrable systems constructed from a valuation} \label{sec-int-system-from-val}
We may now state the main theorem of Part~\ref{part-2} of
this manuscript. The results from the previous sections make the
proof quite straightforward. 
We use the setting of Section~\ref{subsec-Kaehler}, namely that $X$ is an $n$-dimensional complex projective variety
equipped with $\lb$ a very ample Hermitian line bundle on $X$, with corresponding K\"ahler form $\omega$ obtained via the Kodaira 
embedding. We let $R$ denote the 
ring of sections of the line bundle $\lb$. We take a valuation 
$v: \c(X) \setminus \{0\} \to \z^n$ with one-dimensional leaves, where $\z^n$ is equipped with a total order. Then 
$S=S(R)$ and $\Delta = \Delta(R)$ denote the value semigroup and the Newton-Okounkov body of $(R, v)$ respectively. In this setting
we have the following. 
\begin{Th} \label{th-B}
With notation as above, suppose that $(X, \omega)$ is smooth and the semigroup $S = S(R)$ is finitely generated. 
\begin{itemize}
\item[(1)] There exists a completely integrable system $\mu: (F_1, \ldots, F_n) \to \r^n$ on $X$ in the sense of Definition \ref{def-int-system}, i.e.: 
\begin{itemize}
\item[(a)] The functions $F_1, \ldots, F_n$ are continuous on all of $X$.
\item[(b)] There exists an open dense subset $U$ 
such that $F_1, \ldots, F_n$ are differentiable on $U$, 
and the differentials $dF_1, \ldots, dF_n$ are linearly
independent on $U$. 
\item[(c)] $F_1, \ldots, F_n$ pairwise Poisson-commute on $U \subset X$. 
\end{itemize}
Moreover, the image of $\mu$ coincides with the Newton-Okounkov body $\Delta = \Delta(R)$.
\item[(2)] The integrable system in (1) generates a torus 
action on the dense open subset $U$ of $X$. Moreover, the inverse image of the interior of 
$\Delta$ lies in the open subset $U$.
\end{itemize}
\end{Th}
\begin{proof}
Corollary \ref{cor-toric-degen}, Corollary \ref{cor-sing-family} and Proposition \ref{prop-omega-t} show that the conditions 
(a)-(d) in Theorem \ref{th-A} are satisfied for the toric degeneration
constructed from the valuation $v$,
equipped with the K\"ahler structure $\tilde{\omega}$ built in Section~\ref{subsec-Kaehler-family}. Thus Theorem \ref{th-B}
follows from Theorem \ref{th-A}.
\end{proof}

\section{GIT and symplectic quotients}  \label{sec-GIT-part2}
In this section we discuss the compatibility of the flat family
constructed in Section \ref{sec-toric-degen} (out of the data of a
valuation with a finitely generated value semigroup) with a torus
action. We will show that the family can be constructed so that the
compatibility conditions in Section \ref{sec-GIT-part1} are satisfied
and hence the integrable system from Section
\ref{sec-int-system-from-val} descends to the quotient by the
torus. The material in this section is straightforward, but we spell
out the details for completeness. 

\subsection{Quotients of toric varieties} \label{subsec-GIT-toric}
$\T = (\c^*)^n$ denotes, as before, the $n$-dimensional algebraic
torus with lattice of characters $\z^n$.  {Consider a finitely generated semigroup $S \subset \n \times \z^n$ of integral points.
We will assume that $S$ generates the whole lattice $\z^{n+1} = \z \times \z^n$ and also $S_1 = S \cap (\{1\} \times \z^n) \neq \emptyset$. 
The semigroup $S$ gives rise to the semigroup algebra $\c[S]$. The algebra $\c[S]$ is the subagebra of the algebra of polynomials
$\c[t, x_1, \ldots, x_n]$ consisting of polynomials whose monomials belong to the semigroup $S$. That is, 
$$\c[S] = \{ f \in \c[t, x_1, \ldots, x_n] \mid  f = \sum_{(k, \alpha) \in S} c_{(k, \alpha)} t^k x^\alpha\}.$$
Here as usual $x^\alpha$ is shorthand for $x_1^{a_1} \cdots x_n^{a_n}$. 
The projection of $S$ on the first factor gives a grading on the algebra $\c[S]$ (by non-negative integers) and the 
corresponding variety $\Proj(\c[S])$ is a projective $\T$-toric variety. 
}

Now suppose $\HH \subset \T$
is an algebraic subtorus of dimension $m$ with lattice of characters
$M$. The inclusion $\HH \subset \T$ gives a surjective homomorphism
$\lambda: \z^n \to M$, which in turn extends to a linear map
$\lambda_\r: \r^n \to M_\r = M \otimes \r$.  Let $\tilde{\lambda}:
\z^{n+1} \to M$ be a homomorphism such that the restriction of 
$\tilde{\lambda}$ to $\{0\} \times
\z^n$ coincides with $\lambda$. Fix $(1, a_0) \in S$ and define 
$\lambda_0 := \tilde{\lambda}(1, a_0)$. Then for any $(k,
a) \in \z^{n+1}$, the fact that $\tilde{\lambda}$ is a homomorphism
implies 
$$\tilde{\lambda}(k, a) = \lambda(a - ka_0) + k\lambda_0$$ so the lift
$\tilde{\lambda}$ is determined by $\lambda$ and by its value at
$(1,a_0)$. We will return to this point later. 
The lifted homomorphism $\tilde{\lambda}$ gives an $M$-grading on the semigroup algebra $\c[S]$ where for $(k, a) \in S$, the monomial
$t^kx^a$ has degree $\tilde{\lambda}(k, a)$. We can uniquely extend $\tilde{\lambda}$ to a linear function
$\tilde{\lambda}_\r: \r^{n+1} \to M_\r$.
Let $\Lambda'_\r \subset \r^{n+1}$  denote the kernel of $\tilde{\lambda}_\r$ and let $S' = S \cap \Lambda'_\r$ be 
the subsemigroup of $S$ obtained by intersecting with the subspace $\Lambda'_\r$. The kernel of the linear map $\lambda_\r$ 
has dimension $n-m$ and hence the subspace $\Lambda'_\r$ has dimension $n-m+1$.
\begin{Lem} \label{prop-S'}
Let $S$ and $S'$ be as above, and let $\Delta = \Delta(S)$, $\Delta' = \Delta(S')$ be the convex bodies associated to 
the semigroups $S$ and $S'$ respectively (see \eqref{eq:def Delta S}). Then: 
\begin{itemize}
\item[(a)] $S'$ is a finitely generated semigroup. 
\item[(b)] $(\{1\} \times \Delta') = (\{1\} \times \Delta) \cap
  \Lambda'_\r$. 
\end{itemize}
\end{Lem}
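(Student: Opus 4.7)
The plan is to deduce both parts from Gordan's lemma, applied via the natural parametrization of $S$ by its generators.

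For (a), I would pick a finite generating set $s_1, \ldots, s_r$ of $S$ and consider the surjective additive map $\sigma: \z_{\geq 0}^r \to S$ sending $(n_1, \ldots, n_r) \mapsto \sum_i n_i s_i$. The preimage $P := \sigma^{-1}(S') = \{(n_i) \in \z_{\geq 0}^r : \sum_i n_i \tilde{\lambda}(s_i) = 0\}$ is the intersection of the lattice $\z^r$ with a rational polyhedral cone (namely $\r_{\geq 0}^r$ cut by a rational linear subspace), hence by Gordan's lemma it is a finitely generated monoid. Its image $\sigma(P) = S'$ is then also finitely generated.

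For (b), I would first establish the identity of cones $C(S') = C(S) \cap \Lambda'_\r$ in $\r^{n+1}$, and then slice both sides by $\{1\} \times \r^n$; using the defining identity $\{1\} \times \Delta(T) = C(T) \cap (\{1\} \times \r^n)$ for $T = S, S'$, this immediately yields the claim. The inclusion $C(S') \subseteq C(S) \cap \Lambda'_\r$ is immediate since $S' \subseteq S \cap \Lambda'_\r$. For the reverse inclusion, given $x \in C(S) \cap \Lambda'_\r$, write $x = \sum_i r_i s_i$ with $r_i \in \r_{\geq 0}$; the constraint $x \in \Lambda'_\r$ says that $(r_i)$ lies in the rational polyhedral cone $C := \{(r_i) \in \r_{\geq 0}^r : \sum_i r_i \tilde{\lambda}(s_i) = 0\}$. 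Gordan's lemma again yields that $C$ is the real cone generated by its integer points, which are exactly the elements of $P$. Decomposing $(r_i)$ as a non-negative real combination of elements of $P$ and applying $\sigma$ exhibits $x$ as a non-negative real combination of elements of $S'$, i.e.\ $x \in C(S')$.

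The main (though still routine) conceptual step is the reverse inclusion in the cone identity for (b), where rationality of $\Lambda'_\r$ is what allows one to pass from an abstract real decomposition $x = \sum_i r_i s_i$ to a decomposition \emph{over $S'$}. Once this is in hand, both statements follow from Gordan's lemma applied to essentially the same upstairs cone $C \subseteq \r^r$, with (a) concerning its integer points and (b) its real points.
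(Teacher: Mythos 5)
Your proof is correct and follows essentially the same route as the paper: part (a) is the same Gordan's-lemma argument applied to the preimage cone in $\r^\ell_{\geq 0}$ under the parametrization by generators. For part (b) the paper merely asserts the claim follows from the definition, while you supply the actual argument (the cone identity $C(S') = C(S) \cap \Lambda'_\r$, whose nontrivial inclusion uses rationality of $\Lambda'_\r$ via a second application of Gordan's lemma); this is a legitimate and welcome elaboration of what the paper leaves implicit.
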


\begin{proof}
Part (b) of the Lemma follows immediately from the definitions. Part
(a) is a consequence of Gordan's Lemma (see e.g. \cite[Proposition
2.17]{CoxLittleSchenck}). 
\end{proof}

The $M$-grading on $\c[S]$ gives rise to an $\HH$-action
on the $\T$-toric variety $X_S := \Proj~ \c[S]$. By construction, 
the $\HH$-invariant subalgebra $\c[S]^\HH$, which is the homogeneous-degree-$0$ part 
of $\c[S]$ with respect to the $M$-grading, is the semigroup algebra $\c[S']$.  
Let $X_S' := \Proj~ \c[S']$. The algebra $\c[S']$ is $\n \times \z^n$ graded and hence the variety $X_S'$ has a $\T$-action. By definition the variety $X'_S$ 
is the GIT quotient of the variety $X_S$ with respect to the $\HH$-action. 
In fact, it is a toric variety for the action of a torus $\T'$ which is
a quotient of $\T$ (by a subgroup containing, but possibly larger than, $\HH$). The complex dimension of the variety $X_S'$ (and  the torus $\T'$) is 
equal to the real dimension of the polytope $\Delta'$. 

{Fix a set of generators $(i, u_{ij})$ for $S$. Embed $X_S$ in a weighted projective space $W\p$. Let 
$W\p \hookrightarrow \p(V_d^*)$ be embedding of this weighted projective space in a large projective space.  
We equip $X_S \hookrightarrow \p(V_d^*)$ with a symplectic structure as in Section~\ref{sec-prelim-part1}.} 
Let $\mu_H: X_S \to \Lie(H)^* = M_\r$, $\mu_T: X_S \to \Lie(T)^* \cong \r^n$ denote the moment maps
for the actions of the compact tori $H \subset \HH$ and $T \subset
\T$, respectively, 
on $X_S$. 
Recall that by the Kempf-Ness theorem the GIT quotient $X'_S$ can also
be realized as the 
symplectic quotient $\mu_H^{-1}(0) / H$ of $X_S$ at $0$, {provided that
$0$ is a regular value of $\mu_H$ and $H$ acts freely on $\mu_H^{-1}(0)$}. The GIT
quotient $X'_S$ inherits a symplectic (in fact K\"ahler) 
structure coinciding with the quotient 
symplectic structure on $X'_S$ \cite[Section 8.3]{MKF}. 
Let $\mu_{T'}: X'_S \to \Lie(T')^*$ denote the moment map of $X'_S$ regarded as a Hamiltonian $T'$-space. Then the following diagram
\begin{equation} \label{equ-GIT-toric-moment}
\xymatrix{
& \Delta_H \ar@{}[r]|{\subset} & \Lie(H)^*\\
X_S \ar[r]^{\mu_T} \ar[ru]^{\mu_H} & \Delta \ar@{}[r]|{\subset} \ar[u]_{\lambda_\r} & \Lie(T)^* \\
\mu_H^{-1}(0) \ar@{^{(}->}[u]^i \ar[d]^p & \\
X'_S \ar[r]^{\mu_{T'}} & \Delta' \ar@{}[r]|{\subset} \ar@{^{(}->}[uu] & \Lie(T')^*\\
}
\end{equation}
commutes, 
where $i: \mu_H^{-1}(0) \hookrightarrow X_S$ is the inclusion map, and $p: \mu_H^{-1}(0) \to \mu_H^{-1}(0)/H = X'_S$ is 
the symplectic quotient map. Moreover, the image of $\Lie(T')^*$ in $\Lie(T)^*$ lies in the kernel of the linear map $\lambda_\r$.

\subsection{Invariant valuations}\label{subsec:invariant val}

We now consider a more general situation. As usual let $\HH
\cong (\c^*)^m$ denote an $m$-dimensional algebraic torus with
character group $M \cong \z^m$. 
Let $X$ be a projective $\HH$-variety of dimension $n$. Note in
particular that we do not assume $X$ is an $\HH$-toric variety, and it
may be that $m$ is strictly less than $n$. The $\HH$-action on $X$ induces an $\HH$-action on the field of rational functions 
$\c(X)$.

Suppose $\lb$ is an $\HH$-linearized
very ample line bundle on $X$. Then $L = H^0(X, \lb)$ is a finite-dimensional $\HH$-module and the homogeneous coordinate
ring $R = R(L)$ is a graded $\HH$-algebra.
We define the \textbf{weight semigroup} of $R$ to be 
$$S_\HH(R) := \{(k, \lambda) \, \mid \, \textup{ there exists $f \in
  R_k$ with $t \cdot f = t^\lambda f$ for all $t \in \HH$}  \} \subset \n
  \times M.$$ 
If $R$ is finitely generated as an algebra, then $S_\HH(R)$ is a
finitely generated semigroup. 
Hence the convex body $\Delta_\HH := \Delta(S_\HH(R))$ associated
via~\eqref{eq:def Delta S} to the semigroup $S_\HH(R)$ is a rational
polytope. Following \cite{Brion} and \cite{KKh4}, we call $\Delta_\HH$ 
the {\it moment polytope} of the $\HH$-algebra $R$.
The terminology is motivated from the well-known fact that the polytope
$\Delta_\HH$ coincides with the (closure of the) image of the moment map of 
$X_{smooth}$ regarded as a Hamiltonian $H$-space with respect to an
$H$-invariant K\"ahler structure on $\p(L^*)$ (here $H \subset \HH$
denotes the maximal compact torus in $\HH$).

We now fix an $\HH$-invariant valuation $v: \c(X) \setminus \{0\} \to
\z^n$ with one-dimensional leaves. 
Such a valuation always exists (see \cite{Okounkov1, KKh4}).
Let $h \neq 0$ in $L$ be a $\T$-weight vector of weight $\lambda_0$. 
Following the method in Section~\ref{subsec-valuation}, we now define
$\tilde{v}: R \setminus \{0\} \to \n \times \z^n$ by 
\begin{equation}\label{eq:def tilde v Tinvariant}
\tilde{v}(f) = (k, v(f/h^k))
\end{equation}
for $f \in R_k \setminus \{0\}$. Also as before, $\tilde{v}$ has one-dimensional
leaves. The following is straightforward.

\begin{Lem} \label{prop-tilde-v-invariant}
The valuation $\tilde{v}$ in~\eqref{eq:def tilde v Tinvariant} is $\HH$-invariant. 
\end{Lem}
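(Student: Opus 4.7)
The plan is to reduce the $\HH$-invariance of $\tilde{v}$ to the $\HH$-invariance of $v$ by exploiting the fact that $h$ is an $\HH$-weight vector. First I would observe that since the $\HH$-action on $R = \bigoplus_k R_k$ preserves the grading, for any $t \in \HH$ and $f \in R_k \setminus \{0\}$ we have $t \cdot f \in R_k \setminus \{0\}$. Thus the first coordinate of $\tilde{v}(t \cdot f)$, namely $k$, agrees with the first coordinate of $\tilde{v}(f)$, and it remains only to compare the valuations $v((t\cdot f)/h^k)$ and $v(f/h^k)$.

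The heart of the argument is the identity
\begin{equation*}
\frac{t \cdot f}{h^k} \;=\; t^{k \lambda_0}\cdot \left(t \cdot \frac{f}{h^k}\right)
\end{equation*}
in $\c(X)$, where on the right $t$ acts on the rational function $f/h^k$ in the usual way, $(t\cdot \phi)(x) = \phi(t^{-1}x)$. To verify this, I would evaluate both sides pointwise using that $h$ is an $\HH$-weight vector of weight $\lambda_0$, so $(t\cdot h)(x) = t \cdot h(t^{-1}x) = t^{\lambda_0} h(x)$ for the linearized action on sections. For $f \in R_k$ one then computes $((t\cdot f)/h^k)(x) = t \cdot f(t^{-1}x)/h^k(x)$, and writing $f(t^{-1}x) = (f/h^k)(t^{-1}x)\cdot h^k(t^{-1}x)$ and using the weight-vector property of $h^k$ (which has weight $k\lambda_0$) yields exactly the displayed identity.

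Once this identity is in place, the conclusion follows immediately from two basic properties of the valuation $v$: property (c) of Definition~\ref{def-valuation} (invariance under nonzero scalar multiplication) eliminates the factor $t^{k\lambda_0} \in \c^*$, and the hypothesis that $v$ is $\HH$-invariant gives $v(t \cdot (f/h^k)) = v(f/h^k)$. Stringing these together,
\begin{equation*}
v\!\left(\tfrac{t\cdot f}{h^k}\right) = v\!\left(t^{k\lambda_0}\cdot t\cdot \tfrac{f}{h^k}\right) = v\!\left(t \cdot \tfrac{f}{h^k}\right) = v\!\left(\tfrac{f}{h^k}\right),
\end{equation*}
so $\tilde{v}(t\cdot f) = \tilde{v}(f)$ as desired.

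There is no substantive obstacle; the argument is essentially bookkeeping. The only subtle point, which I would want to state carefully, is the distinction between the $\HH$-action on sections of the line bundle $\lb^{\otimes k}$ (which produces $t\cdot f$) and the induced $\HH$-action on the field $\c(X)$ of rational functions. Once one correctly relates these via the fixed weight vector $h$, the character $t^{k\lambda_0}$ that appears is harmless because $v$ is insensitive to nonzero scalars.
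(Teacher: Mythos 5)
Your proposal is correct and follows essentially the same route as the paper: both hinge on the identity $\tfrac{t\cdot f}{h^k} = \tfrac{t\cdot f}{t\cdot h^k}\cdot\tfrac{t\cdot h^k}{h^k} = t^{k\lambda_0}\cdot\bigl(t\cdot\tfrac{f}{h^k}\bigr)$, the $\HH$-invariance of $v$ on $\c(X)$, and the fact that the character value $t^{k\lambda_0}$ is a nonzero constant killed by $v$. The only cosmetic difference is that you invoke property (c) of Definition~\ref{def-valuation} directly for the scalar, whereas the paper uses additivity and $v(t^{k\lambda_0})=0$; these are the same observation.
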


Let $S = S(R) \subset \n \times \z^n$ be the semigroup associated to the algebra $R$ and the valuation $\tilde{v}$. 
As before, we assume that $S(R)$ is 
finitely generated and that, as a group, $S$ generates all of
$\z^{n+1} \cong \z \times \z^n$. 
We record the following \cite{Okounkov1, KKh4}. 

\begin{Lem} \label{prop-tilde-lambda}
Let $(k, a) \in S$. 
\begin{itemize}
\item[(a)] There exists an $\HH$-weight vector $f \in R_k$ with $\tilde{v}(f) = (k, a)$.
\item[(b)] Let $\lambda$ be the $\HH$-weight of the vector $f$ found in
  part (a). The association $(k, a) \mapsto \lambda$ gives a
  well-defined function which is a semigroup homomorphism 
$S \to M$.  
\end{itemize}
\end{Lem}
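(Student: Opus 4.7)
The plan is to use the $\HH$-invariance of $\tilde{v}$ established in Lemma~\ref{prop-tilde-v-invariant} to obtain an $\HH$-equivariant structure on the associated graded pieces of $R$ and then read off the weights from there.

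I would first observe that the $\HH$-invariance of $\tilde{v}$ implies that, for every $(k,a) \in \n \times \z^n$, both of the filtration pieces
\[ R_{\geq (k,a)} := \{f \in R_k \mid \tilde{v}(f) \geq (k,a)\} \cup \{0\} \quad \text{and} \quad R_{> (k,a)} \]
are $\HH$-submodules of $R_k$. Since $\tilde{v}$ has one-dimensional leaves and $(k,a) \in S$, the quotient $R_{\geq (k,a)}/R_{> (k,a)}$ is a one-dimensional $\HH$-module, so it is a single weight space of some weight $\lambda \in M$.

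For part (a), I would decompose $R_k$ into $\HH$-weight spaces; this induces compatible weight-space decompositions of the $\HH$-invariant subspaces $R_{\geq (k,a)}$ and $R_{>(k,a)}$. Exactly one of these weight components of $R_{\geq(k,a)}$ has nontrivial image in the quotient $R_{\geq(k,a)}/R_{>(k,a)}$, and that image is the whole quotient; this forces its weight to equal $\lambda$. Choosing $f$ to be the weight-$\lambda$ component of any lift of a nonzero element of the quotient produces an $\HH$-weight vector in $R_k$ satisfying $\tilde{v}(f) = (k,a)$, as required.

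For part (b), well-definedness of the assignment $(k,a)\mapsto \lambda$ is then immediate, since $\lambda$ is intrinsically characterized as the unique $\HH$-weight of the one-dimensional module $R_{\geq(k,a)}/R_{>(k,a)}$ and does not depend on the particular choice of $f$. For the homomorphism property, given $(k,a),(k',a') \in S$ with weight-vector lifts $f, f'$ of weights $\lambda, \lambda'$ respectively, the product $ff' \in R_{k+k'}$ is an $\HH$-weight vector of weight $\lambda+\lambda'$, and multiplicativity of the valuation gives $\tilde{v}(ff') = \tilde{v}(f)+\tilde{v}(f') = (k+k',\, a+a')$. Since $(k+k',a+a') \in S$, the weight attached to it by our assignment is $\lambda+\lambda'$, which proves the semigroup homomorphism property. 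I do not expect a serious obstacle; the only mildly nontrivial step is the passage in part (a) from an arbitrary lift to an actual weight-vector lift, which is handled by the $\HH$-module decomposition of $R_k$.
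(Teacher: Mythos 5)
The paper does not give its own proof of this lemma: it simply records the statement with citations to \cite{Okounkov1, KKh4}, so there is nothing to compare against. That said, your argument is correct and essentially the standard one. The key observations — that the $\HH$-invariance of $\tilde{v}$ (Lemma~\ref{prop-tilde-v-invariant}) makes $R_{\geq (k,a)} \cap R_k$ and $R_{> (k,a)} \cap R_k$ into $\HH$-submodules, that the one-dimensional-leaves property forces the quotient to be a one-dimensional $\HH$-module, and that the torus action on each finite-dimensional homogeneous piece $R_k$ is diagonalizable so that the weight-space decomposition passes to the invariant subspaces and the quotient — are all sound and give exactly what is needed for (a) and well-definedness in (b). The homomorphism property follows as you say from $\tilde{v}(ff') = \tilde{v}(f) + \tilde{v}(f')$ together with additivity of weights under multiplication, plus the already-established well-definedness to conclude that $(k+k',a+a') \mapsto \lambda + \lambda'$. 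Your proof is complete and would serve as a self-contained replacement for the citation.
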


Since $S$ generates the group $\z^{n+1}$, the semigroup homomorphism
constructed in Lemma~\ref{prop-tilde-lambda}(b) extends uniquely to a group homomorphism
$$\tilde{\lambda}: \z^{n+1} \to M.$$
Let $\lambda = \tilde{\lambda}_{|\{0\} \times \z^n}$ denote the restriction
of $\tilde{\lambda}$ to $\{0\} \times \z^n \cong \z^n$. 
As in Section \ref{subsec-GIT-toric} we can recover $\tilde{\lambda}$ 
from its restriction $\lambda$ as
follows. Notice that since $\tilde{v}(h) = (1, 0)$, we have
$\tilde{\lambda}(1,0) = \lambda_0$. Then for any $(k, a) \in S$ we have 
$\tilde{\lambda}(k, a) = \lambda(a) + k\lambda_0$.
Also as in Section~\ref{subsec-GIT-toric}, we extend $\tilde{\lambda}$ to a linear map $\tilde{\lambda}_\r: \r^n \to M_\r$.
Let $\Lambda'_\r$ denote the kernel of $\tilde{\lambda}_\r$, and let
$S'$ be the semigroup $S \cap \Lambda'_\r$. 
Now consider the subalgebra $R' = R^\HH$, i.e., $R'$ is the $0$-graded
part of $R$ with respect to the $M$-grading. 
The following lemma follows readily from Lemmas~\ref{prop-S'} and~\ref{prop-tilde-lambda}. 

\begin{Lem} \label{prop-R'-S'}
The semigroup $S' := S \cap \Lambda'_\r$ coincides with the semigroup
$S(R')$ associated to the algebra $R'$ and the valuation
$\tilde{v}$. In particular, $S(R')$ is finitely generated.
\end{Lem}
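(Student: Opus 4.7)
The plan is to prove the equality $S' = S(R')$ by establishing both inclusions, using the one-dimensional leaves property of $\tilde v$ together with Lemma \ref{prop-tilde-lambda}. Finite generation of $S(R')$ will then be immediate from the finite generation of $S' = S \cap \Lambda'_\r$ established in Lemma \ref{prop-S'}(a).

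For the inclusion $S \cap \Lambda'_\r \subseteq S(R')$, I would proceed directly: given $(k,a) \in S \cap \Lambda'_\r$, Lemma \ref{prop-tilde-lambda}(a) produces an $\HH$-weight vector $f \in R_k$ with $\tilde v(f) = (k,a)$. By Lemma \ref{prop-tilde-lambda}(b), the $\HH$-weight of any such $f$ is $\tilde\lambda(k,a) = 0$ (since $(k,a) \in \Lambda'_\r$ by hypothesis), so $f \in R^\HH = R'$ and hence $(k,a) \in S(R')$.

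The reverse inclusion $S(R') \subseteq S \cap \Lambda'_\r$ is the main point and is where I expect the subtlety to lie. Given $f \in R'$ with $\tilde v(f) = (k,a)$, the element $f$ is in general not homogeneous with respect to the $\n$-grading. I would first decompose $f = \sum_i f_i$ with $f_i \in R_i$. Since the grading pieces $R_i$ are $\HH$-stable, each $f_i$ is also $\HH$-invariant. By the definition~\eqref{eq:def tilde v} of $\tilde v$ (applied using the ordering~\eqref{eq:ordering on N times Z^n}), the top-degree component $f_k$ satisfies $\tilde v(f_k) = (k, v(f_k/h^k)) = (k,a)$. Thus $f_k$ is a nonzero $\HH$-weight vector of weight $0$ with $\tilde v(f_k) = (k,a)$. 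The well-definedness of the map in Lemma \ref{prop-tilde-lambda}(b) -- which is precisely a consequence of the one-dimensional leaves property together with the $\HH$-invariance of $\tilde v$ -- then forces $\tilde\lambda(k,a) = 0$, so that $(k,a) \in \Lambda'_\r$.

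The hardest step is verifying that the weight is uniquely determined by the valuation, i.e.\ that the function in Lemma \ref{prop-tilde-lambda}(b) is well defined; if it has not already been fully justified in the cited reference, I would include the following short argument: if $g_1, g_2$ are $\HH$-weight vectors of distinct weights $\mu_1 \ne \mu_2$ with $\tilde v(g_1) = \tilde v(g_2) = (k,a)$, then by one-dimensional leaves there exists $\lambda \in \c^\times$ with $\tilde v(g_2 - \lambda g_1) > (k,a)$. Applying $t \in \HH$ and comparing $t \cdot g_2 = t^{\mu_2} g_2$ with $t \cdot (\lambda g_1 + (g_2 - \lambda g_1))$ yields $(t^{\mu_2} - t^{\mu_1})\lambda g_1 = t\cdot g' - t^{\mu_2} g'$ for $g' = g_2 - \lambda g_1$; the left-hand side has valuation exactly $(k,a)$ for generic $t$, while $\HH$-invariance of $\tilde v$ shows the right-hand side has valuation strictly greater than $(k,a)$, a contradiction. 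Finally, for finite generation of $S(R') = S \cap \Lambda'_\r$, I would simply cite Lemma \ref{prop-S'}(a), whose proof via Gordan's lemma applies verbatim.
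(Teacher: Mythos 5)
Your proof is correct and takes essentially the same two-inclusion approach as the paper, resting on Lemma~\ref{prop-tilde-lambda}(a),(b) in both directions and citing Lemma~\ref{prop-S'}(a) for finite generation. The only notable difference is that you insert a decomposition $f = \sum_i f_i$ and pass to the top homogeneous component; this is harmless but unnecessary, since by definition (applying~\eqref{eq:definition S} to $R'$) the semigroup $S(R')$ is already defined via homogeneous elements $f \in R'_k$, so one can take $f$ homogeneous from the start as the paper does. Your auxiliary verification of the well-definedness of the weight map in Lemma~\ref{prop-tilde-lambda}(b) is a correct (and welcome) expansion of a point the paper delegates to the cited references.
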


\subsection{Integrable systems compatible with GIT and symplectic
  quotients}

We retain the notation of Section~\ref{subsec:invariant val}. 
In particular, $\HH$ denotes the torus which acts on $X$ (and
hence $R$). 
Since the valuation $\tilde{v}$ on $R$ is $\HH$-invariant by Lemma~\ref{prop-tilde-v-invariant}, 
the $M$-grading on $R$ is compatible with $\tilde{v}$ 
in the sense that for $k > 0$ and  $u \in \z^n$, the spaces $(R_k)_{\geq u}$ and $(R_k)_{> u}$ are $M$-graded. 
We record the following \cite[Proposition 5.18]{Anderson10}. 
Recall that $\RR$ is the family of algebras in Theorem \ref{th-toric-degen} which degenerates the coordinate ring $R$ to the coordinate ring of a semigroup algebra $\c[S]$.

\begin{Prop} \label{prop-lift-grading}
The $M$-grading on $R$ can be lifted to an $M$-grading on the family $\RR$.  
\end{Prop}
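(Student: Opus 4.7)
The plan is to realize the lifted $M$-grading on $\R$ via an $\HH$-action on $\R$ inherited from a natural $\HH$-action on $R[t]$, exploiting the explicit presentation of $\R$ from the proof of Theorem~\ref{th-toric-degen}.

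First, I would refine the SAGBI basis $\{f_{ij}\}$ so that each $f_{ij}$ is an $\HH$-weight vector, while retaining conditions (a)-(c) of Definition~\ref{def-SAGBI}. Each graded piece $R_i$ decomposes into $\HH$-weight spaces $R_i = \bigoplus_\mu (R_i)_\mu$, and within each $(R_i)_\mu$, Proposition~\ref{prop-val-dim} yields a basis on which $\tilde{v}$ takes distinct values. Taking the union over $\mu$ produces an $\HH$-eigenbasis of $R_i$ on which $\tilde{v}$ is injective: elements in distinct weight spaces must have distinct $\tilde{v}$-values, because the $\HH$-invariance of $\tilde{v}$ makes $R_{\geq(i,u)}$ and $R_{>(i,u)}$ both $\HH$-stable, and the one-dimensional-leaves hypothesis then forces the leaf $R_{\geq(i,u)}/R_{>(i,u)}$ to lie in a single $\HH$-weight. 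By Lemma~\ref{prop-tilde-lambda} combined with this argument, any such basis vector with valuation $(i, u)$ must have $\HH$-weight $\tilde{\lambda}(i, u)$.

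Second, I would equip $R[t]$ with the $\HH$-action that extends the one on $R$ and acts trivially on $t$, so that $R[t]$ carries an $M$-grading in which $t$ has weight $0$. Under the evaluation homomorphism $A[\tau] \to R[t]$ from~\eqref{eq:homomorphism Atau to Rt}, the generator $x_{ij}$ is sent to $t^{w_{ij}} f_{ij}$, which is an $\HH$-weight vector of weight $\tilde{\lambda}(i, u_{ij})$, and $\tau$ is sent to $t$, of weight $0$. Hence the image $\R \subseteq R[t]$ is generated as a subring by $\HH$-weight vectors, and is in particular $\HH$-stable. The resulting $\HH$-action on $\R$ yields the desired $M$-grading $\R = \bigoplus_\mu \R_\mu$, and under the isomorphism $\R[t^{-1}] \cong R[t, t^{-1}]$ of Theorem~\ref{th-toric-degen}(a) it coincides with the $M$-grading on $R[t, t^{-1}]$ extending the one on $R$, so the lifted grading genuinely extends the original.

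The main obstacle is the first step: producing a SAGBI basis that is simultaneously an $\HH$-eigenbasis with injective $\tilde{v}$. This is precisely where the $\HH$-invariance of $\tilde{v}$ and the one-dimensional-leaves hypothesis do the real work; once this refinement is in place, the remaining verification that $\R$ is $\HH$-stable is essentially formal from the explicit form of the map $A[\tau] \to R[t]$.
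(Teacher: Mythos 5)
Your proposal is correct and follows essentially the same route as the paper: extend the $M$-grading to $R[t]$ by declaring $\deg(t)=0$ and observe that $\R$ is generated by the $M$-homogeneous elements $t$ and $t^{w_{ij}} f_{ij}$. The only difference is that the paper dismisses the $M$-homogeneity of the SAGBI basis $\{f_{ij}\}$ as a "without loss of generality," whereas you supply the (correct) justification, using $\HH$-invariance of $\tilde v$ plus the one-dimensional-leaves property to produce an $\HH$-eigenbasis of each $R_i$ with injective $\tilde v$.
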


\begin{proof}[Sketch of proof]
We use notation as in the proof of Theorem
\ref{th-toric-degen}. Without loss of generality we may assume that
the elements
$f_{ij} \in R$ are homogeneous with respect to the $M$-grading. We
additionally extend the $M$-grading to $R[t]$ by defining 
$\deg(t) = 0$. Then the $\c[t]$-algebra $\RR$ generated by $t$ and the $\tilde{f}_{ij} = t^{w_{ij}} f_{ij}$ is an 
$(\n \times M \times \n)$-graded subalgebra of $R[t]$. In addition, we
can equip $\c[x_{ij}, \tau]$ with an $M$-grading by defining $\deg(\tau) = 0$ and 
$\deg(x_{ij}) = \deg(f_{ij}) = \tilde{\lambda}(i, u_{ij}) \in M$ where $\tilde{v}(f_{ij}) = (i, u_{ij})$. Then the map $\c[x_{ij}, \tau] \to \RR$
preserves the $M$-gradings. 
\end{proof}

Taking $\Proj$ we obtain the following. 

\begin{Cor} \label{cor-lift-action}
\begin{itemize}
\item[(a)] The $\HH$-action on $X$ lifts to an $\HH$-action on the 
  family $\X$.
\item[(b)] The family $\X$ is an $\HH$-invariant subvariety of $W\p \times \c$, where $\HH$ acts on $W\p$ via the natural action of
$\HH$ on $L^*, (L^2)^*, \ldots, (L^r)^*$, and $\HH$ acts on $\c$ trivially.
\end{itemize}
\end{Cor}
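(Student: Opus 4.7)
The plan is to derive the corollary by directly applying $\Proj$ with respect to the $\n$-grading to the lifted $M$-grading on $\R$ established in Proposition \ref{prop-lift-grading}. Since $M$ is the character lattice of $\HH$, any $M$-grading on an $\n$-graded algebra (compatible with the $\n$-grading, which we have here because the two gradings together form part of an $\n \times M \times \n$-grading) descends to an $\HH$-action on $\Proj$. This immediately produces an $\HH$-action on $\X = \Proj~\R$ and is the key mechanism behind both (a) and (b).

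For part (a), after obtaining the $\HH$-action on $\X$ by taking $\Proj$, I would verify two compatibility statements. First, the projection $\pi: \X \to \c$ is $\HH$-equivariant with $\HH$ acting trivially on $\c$: this is because the element $t \in \R$ has $M$-degree $0$ in the lifted grading (by the convention $\deg(t)=0$ used in the proof of Proposition \ref{prop-lift-grading}). Second, I would verify that the isomorphism $\rho: X \times \c^* \to \X \setminus X_0$ of Corollary \ref{cor-toric-degen} is $\HH$-equivariant; this follows because the $M$-grading on $\R$ is inherited from the $M$-grading on $R$ via the inclusion $\R \subset R[t]$, so the isomorphism $\R[t^{-1}] \cong R[t,t^{-1}]$ respects $M$-gradings and hence is $\HH$-equivariant.

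For part (b), I would use the explicit presentation from the proof of Theorem \ref{th-toric-degen}: the family $\X$ is cut out in $W\p \times \c$ by the system $\{\tilde{g}_k(x_{ij},t)=0\}$. Since the surjection $\c[x_{ij},\tau] \to \R$ sending $x_{ij} \mapsto t^{w_{ij}} f_{ij}$ and $\tau \mapsto t$ preserves the $M$-grading (as established in the proof of Proposition \ref{prop-lift-grading}), the relations $\tilde{g}_k$ are $M$-homogeneous in $\c[x_{ij},\tau]$. Consequently, their common zero locus in $W\p \times \c$ is $\HH$-stable. To identify this $\HH$-action on $W\p$ with the natural action coming from the linear $\HH$-action on each $(L^i)^*$, I would use the fact that the $f_{ij}$ may be chosen to be $\HH$-weight vectors of weight $\tilde{\lambda}(i,u_{ij})$; the dual coordinates $x_{ij}$ on $(L^i)^*$ then carry weight $\tilde{\lambda}(i,u_{ij})$, which exactly matches the $M$-degree assigned to $x_{ij}$.

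The main technical point to be careful about is the last identification: matching the abstract $\HH$-action produced by the $M$-grading on $\c[x_{ij},\tau]$ (and hence on $W\p \times \c$) with the natural linear $\HH$-action on each factor $(L^i)^*$. This requires that the SAGBI basis $\{f_{ij}\}$ is chosen to consist of $\HH$-weight vectors — something we can arrange since the $\HH$-action on $R$ is completely reducible, so that the filtration $R_{\geq (m,u)}$ (which is $\HH$-invariant because $\tilde{v}$ is $\HH$-invariant by Lemma \ref{prop-tilde-v-invariant}) admits $\HH$-weight bases of the graded quotients. Aside from this verification, the argument is essentially a formal consequence of Proposition \ref{prop-lift-grading} together with the explicit description of $\X$ as a vanishing locus.
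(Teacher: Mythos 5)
Your proposal is correct and follows the paper's route exactly: the paper derives this corollary from Proposition~\ref{prop-lift-grading} with the single remark ``Taking $\Proj$ we obtain the following,'' and your argument simply unpacks what that entails (equivariance of $\pi$ because $\deg(t)=0$, equivariance of $\rho$ via $\R \subset R[t]$, $M$-homogeneity of the $\tilde{g}_k$, and the identification of the abstract $\HH$-action with the natural one via $\HH$-weight SAGBI basis vectors, which exist by complete reducibility and $\HH$-invariance of the $\tilde{v}$-filtration). No gaps; you have made explicit the verifications the paper takes as understood.
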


As mentioned above, in order to make the constructions of the toric
degeneration and the integrable system compatible with the
$\HH$-action, we must choose the Khovanskii basis $\{f_{ij}\}$ appropriately. More
specifically, we assume the following. 
\begin{itemize}
\item[(1)] Each $f_{ij}$ is homogeneous with respect to the $M$-grading.
\item[(2)] A subset of the collection $\{f_{ij}\}$ forms a Khovanskii basis 
  for $R'$. More precisely, 
For each $i$, $1 \leq i \leq r$, there exists $n'_i \leq n_i$ such
that the $\{f_{ij}\}_{1 \leq j \leq n'_i}$ are 
homogeneous of degree $0$ with respect to the $M$-grading, and the
collection $\{\tilde{v}(f_{ij})\}_{1 \leq i \leq r, 1 \leq j \leq n'_i}$ generate the semigroup $S(R')$.
\end{itemize}

Let $\RR$ and $\RR'$ denote the degenerating families corresponding to
$R$ and $R'$, respectively, constructed as in
Theorem~\ref{th-toric-degen}, using the collections $\{f_{ij}\}_{1
  \leq i \leq r, 1 \leq j \leq n_i}$ and $\{f_{ij}\}_{1 \leq i \leq r,
  1 \leq j \leq n'_i}$ respectively. The following is immediate. 

\begin{Lem}
The algebra $\RR'$ is the degree-$0$ part of $\RR$, i.e. $\RR' = \RR^\HH$, with respect to the
$M$-grading on $\RR$ from Proposition~\ref{prop-lift-grading}. 
\end{Lem}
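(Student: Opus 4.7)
The plan is to prove the two inclusions $\R' \subseteq \R^\HH$ and $\R^\HH \subseteq \R'$ separately. The first is immediate: by Proposition~\ref{prop-lift-grading}, the $M$-grading on $\R$ is inherited from $R[t]$ with $t$ of $M$-degree zero, so each generator $\tilde{f}_{ij} = t^{w_{ij}} f_{ij}$ of $\R$ has the same $M$-degree as $f_{ij}$. By condition~(2) on the SAGBI basis, the generators $\tilde{f}_{ij}$ with $j \leq n'_i$, together with $t$, all have $M$-degree $0$, so the subalgebra $\R'$ they generate is contained in $\R^\HH$.

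For the reverse inclusion I will show that the quotient $Q := \R^\HH / \R'$ is both $t$-torsion and equal to $tQ$. To see that $Q$ is $t$-torsion, localize at $t$: Theorem~\ref{th-toric-degen}(a) gives $\R[t^{-1}] \cong R[t,t^{-1}]$, and since taking $\HH$-invariants is exact (as $\HH$ is a torus, hence reductive), this yields $\R^\HH[t^{-1}] \cong R^\HH[t,t^{-1}] = R'[t,t^{-1}]$; likewise $\R'[t^{-1}] = R'[t,t^{-1}]$, so $Q[t^{-1}] = 0$. To see $Q = tQ$, reduce modulo $t$: Theorem~\ref{th-toric-degen}(b) gives $\R/t\R \cong \c[S]$, and the description of $\c[S]^\HH$ as $\c[S']$ from Section~\ref{subsec-GIT-toric}, combined with the equality $t\R \cap \R^\HH = t\R^\HH$ (which holds because $t$ is $M$-degree zero), yields $\R^\HH/t\R^\HH \cong \c[S']$. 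Similarly $\R'/t\R' \cong \c[S']$ by Theorem~\ref{th-toric-degen}(b), and the inclusion-induced map $\R'/t\R' \to \R^\HH/t\R^\HH$ is the identity on $\c[S']$ (sending the reduction of each $\tilde{f}_{ij}$, $j \leq n'_i$, to itself). This gives $\R^\HH = \R' + t\R^\HH$, i.e.\ $Q = tQ$.

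To conclude, I exploit the $\n$-grading $\R = \bigoplus_k \R_k$ inherited from $R = \bigoplus_k R_k$ (with $t$ of degree $0$): each $\R^\HH_k \subseteq R'_k[t]$ is a finitely generated $\c[t]$-module since $R'_k$ is finite-dimensional, and hence so is $Q_k := \R^\HH_k / \R'_k$. From $t$-torsion we get $t^N Q_k = 0$ for some $N = N(k)$, and combining with $Q_k = tQ_k$ yields $Q_k = t^N Q_k = 0$. Hence $\R^\HH_k = \R'_k$ for every $k$, so $\R^\HH = \R'$.

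The main technical point requiring care is the identification of $\R^\HH/t\R^\HH$ with $\c[S']$; this depends on both the exactness of $\HH$-invariants (making $\R^\HH \twoheadrightarrow (\R/t\R)^\HH$ surjective) and the SAGBI hypothesis~(2) together with Proposition~\ref{prop-R'-S'}, which together guarantee that the reductions modulo $t$ of the chosen generators $\tilde{f}_{ij}$ for $j \leq n'_i$ actually surject onto $\c[S']$.
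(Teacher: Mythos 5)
Your proof is correct and supplies a complete argument for a statement the paper declares ``immediate'' and leaves unproved. The inclusion $\R' \subseteq \R^\HH$ really is immediate from the $M$-homogeneity of the generators, but the reverse inclusion --- that an $\HH$-invariant element of $\R$ can be rewritten using only the degree-zero generators --- is the genuine content (a graded ring need not have its degree-zero part generated by its degree-zero generators), and your Nakayama-style argument handles it cleanly: you show $Q = \R^\HH/\R'$ is simultaneously $t$-torsion (from $\R[t^{-1}] = R[t,t^{-1}]$ and exactness of torus invariants) and $t$-divisible (from the surjection $\R'/t\R' \to (\R/t\R)^\HH$, both being $\c[S']$ via Theorem~\ref{th-toric-degen}(b), Lemma~\ref{prop-R'-S'}, and the identity $\c[S]^\HH = \c[S']$), then invoke finite generation over $\c[t]$ in each $\n$-graded piece to conclude $Q=0$. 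All the intermediate steps (exactness of $\HH$-invariants, $t\R \cap \R^\HH = t\R^\HH$ from $M$-homogeneity of $t$, application of Proposition~\ref{prop-subduction} to the sub-SAGBI basis of $R'$) check out. One hypothesis you, like the paper, leave tacit: the families $\R$ and $\R'$ must be built with a \emph{common} choice of the linear map $p$ from the proof of Theorem~\ref{th-toric-degen}, hence identical weights $w_{ij}$ for $j \leq n'_i$; otherwise the generators $\tilde{f}_{ij} = t^{w_{ij}}f_{ij}$ of $\R'$ need not lie in $\R$ and the statement would be false as written. It would be worth a sentence making that compatibility explicit.
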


Taking $\Proj$ we obtain the following. 

\begin{Cor} \label{cor-degen-commutes-with-GIT}
There exist flat families $\pi: \X \to \c$ and $\pi': \X' \to \c$ and a morphism $p: \X \to \X'$ such that:
\begin{itemize}
\item[(a)] The family $\X$ is an $\HH$-variety and the projection $\pi$
  is $\HH$-invariant. 
\item[(b)] The family $\X'$ is the GIT quotient $\X // \HH$ of $\X$ by
  $\HH$. 
\item[(c)] Let $\X^{ss} \subset \X$ denote the set of semistable
  points of $\X$ with respect to the $\HH$-action and let $p: \X^{ss}
  \to \X' = \X^{ss}/\HH$ be the quotient map. Then the diagram 
\begin{equation} \label{equ-diagram-commutative}
\xymatrix{
\X^{ss} \ar[rr]^{p} \ar[rd]_\pi & & \X' \ar[ld]^{\pi'} \\
& \c &\\
}
\end{equation}
commutes. 
\item[(vi)] 
The general fibers of $\X$ and $\X'$ are $X$ and $X' = X // \HH$,
respectively, and the special fibers are $X_0$ and $X'_0 = X_0 // \HH$,
respectively.  
\end{itemize}
\end{Cor}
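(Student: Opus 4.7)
The plan is to obtain the corollary by applying $\Proj$ (with respect to the first $\n$-grading) to the lemma immediately preceding it, namely $\R' = \R^\HH$, and checking each of the four claims in turn using standard properties of $\Proj$ together with the results cited earlier in the section.

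First I would apply Theorem~\ref{th-toric-degen} twice, once to the algebra $R$ with the SAGBI basis $\{f_{ij}\}_{1 \leq i \leq r,\, 1 \leq j \leq n_i}$ to obtain the $\c[t]$-flat $\n$-graded family $\R \subset R[t]$, and once to $R'$ with the sub-collection $\{f_{ij}\}_{1 \leq i \leq r,\, 1 \leq j \leq n'_i}$ to obtain $\R' \subset R'[t]$. Setting $\X := \Proj~\R \to \c$ and $\X' := \Proj~\R' \to \c$ yields two flat families over $\c$ whose general fibers are $X$ and $X'$ and whose special fibers are $\Proj~\c[S]$ and $\Proj~\c[S']$ respectively, by Corollary~\ref{cor-toric-degen}. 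By Proposition~\ref{prop-lift-grading} the family $\R$ carries a lifted $M$-grading, which in turn determines a lifted $\HH$-action on $\X$; this gives part (a). Moreover, since $t \in \R$ has $M$-degree $0$, the projection $\pi: \X \to \c$ (which is induced by the inclusion $\c[t] \hookrightarrow \R$) is $\HH$-invariant, which is the remaining assertion of part (a).

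For part (b), recall the preceding lemma: $\R^\HH = \R'$ as the $M$-degree-zero subalgebra. The standard construction of GIT quotients for a torus action on a projective scheme with the linearization determined by an ample line bundle says that $\Proj(\R) // \HH = \Proj(\R^\HH)$ (this is where the linearization coming from the $\n \times M$-bigrading on $\R$ is used). Thus $\X // \HH = \Proj(\R') = \X'$, establishing (b). Part (c) then follows formally: let $\X^{ss} \subset \X$ be the semistable locus for this linearization and $p: \X^{ss} \to \X' = \X^{ss}/\HH$ the quotient map; then the projection $\c[t] \hookrightarrow \R^\HH \hookrightarrow \R$ being $\HH$-equivariant (with trivial action on $\c[t]$) implies that $\pi' \circ p = \pi$ on $\X^{ss}$, giving the commutative diagram~\eqref{equ-diagram-commutative}.

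Finally, for part (d), on the open locus $t \neq 0$, Theorem~\ref{th-toric-degen}(a) gives $\R[t^{-1}] \cong R[t,t^{-1}]$ as $\c[t,t^{-1}]$-algebras, equivariantly for $\HH$, so the general fiber of $\X$ is $X$ and the general fiber of $\X'$ is $\Proj(R'[t,t^{-1}])_t = \Proj(R^\HH) = X // \HH = X'$. At $t = 0$, Theorem~\ref{th-toric-degen}(b) identifies the special fiber of $\X$ with $\Proj~\c[S]$; combined with Lemma~\ref{prop-R'-S'}, which says $S' = S(R')$, we get that the special fiber of $\X'$ is $\Proj~\c[S'] = \Proj~\c[S]^\HH = X_0 // \HH$, as required. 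The main obstacle is the GIT step in part (b): one must verify that the $\HH$-linearization on $\X$ coming from the $M$-grading on $\R$ is the one that produces $\Proj(\R^\HH)$ as the GIT quotient, and one should check that the assumption that $R'$ is also generated (via its own SAGBI basis satisfying the analogues of properties (a)--(c) of Definition~\ref{def-SAGBI}) is consistent with the inclusion $\{f_{ij}\}_{j \leq n'_i} \subset \{f_{ij}\}$ chosen in~(1)--(2) above, so that $\R'$ really coincides with the $\HH$-invariants of $\R$ rather than merely being contained in them.
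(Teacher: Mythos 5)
Your proposal is correct and matches the paper's intent: the paper's entire ``proof'' of this corollary is the phrase ``Taking $\Proj$ we obtain the following,'' and your write-up spells out exactly the chain of implications that phrase compresses — applying $\Proj$ to the preceding lemma $\R' = \R^\HH$, using Proposition~\ref{prop-lift-grading} for the lifted $M$-grading (hence the $\HH$-action) and the standard fact $\Proj(\R) \mathbin{/\!/} \HH = \Proj(\R^\HH)$ for part (b), and invoking Theorem~\ref{th-toric-degen} and Lemma~\ref{prop-R'-S'} to identify the general and special fibers.
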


Now let $\mu: X \to \r^n$ and $\mu': X' \to \r^{n-m}$ 
be the integrable systems constructed in Theorem \ref{th-A}
corresponding to the families $\X$ and $\X'$ respectively. The
following is now immediate from Theorem \ref{th-int-system-GIT-part1} and discussion in 
Section \ref{sec-GIT-part1}: 

\begin{Th}\label{th-int-system-GIT-part1-part2}
The following diagram is commutative:
\begin{equation} \label{equ-comm-diag-GIT}
\xymatrix{
& \Delta_H \ar@{}[r]|{\subset} & \Lie(H)^*\\
X \ar[r]^{\mu} \ar[ru]^{\mu_H} & \Delta \ar@{}[r]|{\subset} \ar[u]_{{\lambda}_\r} & \r^n \\
\mu_H^{-1}(0) \ar@{^{(}->}[u]^i \ar[d]_p & \\
X' \ar[r]^{\mu'} & \Delta' \ar@{}[r]|{\subset} \ar@{^{(}->}[uu] & \r^{n-m}\\
}
\end{equation}
\end{Th}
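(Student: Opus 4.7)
The plan is to reduce Theorem~\ref{th-int-system-GIT-part1-part2} to an application of Theorem~\ref{th-int-system-GIT-part1} from Part~\ref{part-1}, applied to the $\HH$-equivariant degenerating families $\pi:\X\to\c$ and $\pi':\X'\to\c$ produced by Corollary~\ref{cor-degen-commutes-with-GIT}. For this I must (a) verify that these families satisfy the compatibility conditions (i) and (ii) stated at the beginning of Section~\ref{sec-GIT-part1}, and (b) identify the abstract dual-inclusion map $i^{*}:\Lie(T)^{*}\to\Lie(H)^{*}$ appearing there with the concrete linear map $\lambda_\r$ built from the weight semigroup in Section~\ref{subsec-GIT-toric}.

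For condition (i), the K\"ahler form on $\p(\mathbf{V}_d^{*})$ controlling the geometry of $\X$ is the form $\mathbf{\Omega}$ of Theorem~\ref{th-Kaehler-form}, which I can arrange to be $T$-invariant (and hence $H$-invariant, since $H\subset T$): by Lemma~\ref{lem-valuation-orthonomal} the SAGBI basis $\{f_{ij}\}$ can be chosen so that for each $i$, the dual basis of $(L^{i})^{*}$ is an $H$-weight orthonormal basis, and then Proposition~\ref{prop-inv-Hermitian-V_d} applies. For condition (ii), parts (a) and (b) of Corollary~\ref{cor-lift-action} give that $\X$ is $\HH$-invariant in $W\p\times\c$, while Proposition~\ref{prop-omega-t}(a) combined with the $\HH$-equivariance of $\rho_{1}$ (coming from the way the $\HH$-grading extends to $\R$ in Proposition~\ref{prop-lift-grading}) shows that the $\HH$-action on $X_{1}$ is identified with the one on $X$. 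Granting in addition the standard regularity hypotheses (that $0$ is a regular value of $\tilde\mu_{H}$ and $H$ acts with finite stabilizers on $\tilde\mu_{H}^{-1}(0)$), Theorem~\ref{th-int-system-GIT-part1} then produces integrable systems $\mu$ on $X$ and $\mu'$ on $X'=X\mod\HH$ fitting into a commutative diagram of exactly the shape asserted in our theorem, but with $i^{*}$ in the position occupied by $\lambda_\r$.

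It remains to identify $i^{*}$ with $\lambda_\r$. This is essentially toric: by Proposition~\ref{prop-lift-grading} the $M$-grading on the special fibre's coordinate ring $\gr R\cong\c[S]$ is the one defined by $\tilde\lambda:\z^{n+1}\to M$ in Section~\ref{subsec:invariant val}, so the $\HH$-action on the toric variety $X_{0}=\Proj\c[S]$ that comes from degenerating the $\HH$-action on $X$ agrees with the one induced by the weight-semigroup construction of Section~\ref{subsec-GIT-toric}. Consequently the inclusion $\HH\hookrightarrow\T$ of subtori on $X_{0}$ is exactly the one whose dual linear part is $\lambda_\r$ (the translation by $\lambda_{0}=\tilde\lambda(1,0)$ affects only the linearization and hence does not change the linear map at the level of the moment polytope $\Delta\subset\r^{n}$). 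Plugging the toric diagram~\eqref{equ-GIT-toric-moment} into the diagram delivered by Theorem~\ref{th-int-system-GIT-part1} yields the theorem. The main obstacle here is precisely this identification of the two \emph{a priori} different $\HH$-actions on $X_{0}$ (one coming from the degeneration of the $\HH$-action on $X$, one from the $M$-grading on $\c[S]$), which is handled by Proposition~\ref{prop-lift-grading}; everything else is a matter of assembling diagrams already proved.
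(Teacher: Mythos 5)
Your proposal is correct and follows essentially the same route the paper takes, namely reducing to Theorem~\ref{th-int-system-GIT-part1} applied to the $\HH$-equivariant families from Corollary~\ref{cor-degen-commutes-with-GIT}. The paper itself gives no explicit proof (it declares the statement "immediate from Theorem~\ref{th-int-system-GIT-part1} and discussion in Section~\ref{sec-GIT-part1}"), and your write-up simply spells out the two implicit verifications: that conditions (i)--(ii) of Section~\ref{sec-GIT-part1} hold for the valuation-built families (via Lemma~\ref{lem-valuation-orthonomal}, Proposition~\ref{prop-inv-Hermitian-V_d}, Corollary~\ref{cor-lift-action}, and Proposition~\ref{prop-lift-grading}), and that the map $i^{*}$ of Section~\ref{sec-GIT-part1} is the map $\lambda_\r$ of Section~\ref{subsec-GIT-toric}. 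The only remark worth making is that this last identification is essentially definitional once one knows the $\HH$-action on $X_{0}=\Proj\c[S]$ produced by the degeneration of the $\HH$-action on $X$ is the one coming from the $M$-grading via $\tilde\lambda$ -- which is exactly what Proposition~\ref{prop-lift-grading} ensures, as you note -- so your characterization of it as "the main obstacle" slightly overstates the difficulty; but the logic is sound and matches the paper's intent.
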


\section{Examples} \label{sec-examples}

In previous sections we used the notation $\HH$ to denote an
algebraic torus acting on the variety $X$ where $\dim_\c \HH = m$ is
possibly less than $n = \dim_\c X$, and we reserved the notation $\T$
to denote the algebraic torus whose dimension is precisely equal to
$\dim_\c X$. In the discussion below, we deviate from this
notation and use the notation $\T$ (as is standard in the literature) for the torus which acts
on $X$, even when $\dim_\c \T$ is strictly less than $\dim_\c X$. 

\subsection{Elliptic curves} 

Let $X$ be an elliptic curve, and $v$ be the valuation on $\c(X)$
associated to a point $p \in X$. Consider the line bundle $\lb =
\mathcal{O}_X(3p)$ and $L = H^0(X, \mathcal{O}_X(3p))$ giving the
cubic embedding of $X$ in $\c \p^2$. The semigroup $S(R(L)) \subset \n
\times \z$ is generated by $(1, 0)$, $(1, 1)$, $(1, 3)$ and hence
finitely generated \cite[Example 5]{Anderson10}. The curve $X$
degenerates to a cuspidal cubic curve. The Newton-Okounkov body
$\Delta(R(L))$ is the line segment $[0,3]$. Theorem \ref{th-B} gives a 
function $F: X \to \r$ which is continuous on all of
$X$, differentiable on a dense open subset $U$, with image precisely
$[0,3]$, {and whose Hamiltonian vector field generates a circle action
  on $U$. Here $U$ is the complement of the figure $8$ in $X$. 
See Figure~\ref{fig:elliptic}. Moreover, the inverse image of the open interval $(0,3)$ is
  contained in $U$}.

\begin{figure} 
\includegraphics{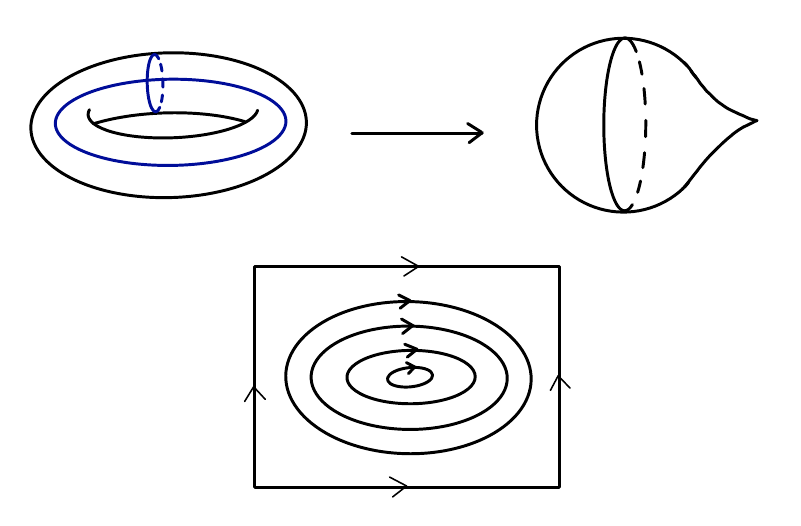} 
\caption{The elliptic curve degenerates to a cuspidal cubic curve
  (which is singular). The Hamiltonian 
  $S^1$-action on the smooth points of the cuspidal cubic is the
  usual rotation; this pulls back to an $S^1$-action on the complement
   of the figure $8$ in the elliptic curve.} 
\label{fig:elliptic} 
\end{figure}

\subsection{Flag varieties} \label{subsec-flag-var}

Let $G$ be a connected complex reductive algebraic group , $B$ a Borel subgroup and $\T$ its maximal torus.
We denote the weight lattice, i.e. the character lattice of $\T$, by $\Lambda$.
Then $\Lambda^+$ (respectively $\Lambda^+_\r$) are the semigroup of dominant weights (respectively positive
Weyl chamber) corresponding to the choice of $B$. We also fix a maximal compact subgroup $K$ compatible with the 
choice of $B$ and $\T$ so that $T = K \cap \T$ is a maximal torus
of $K$.

Let $G/B$ denote the complete flag variety of $G$. 
Given a regular dominant weight $\lambda$, i.e. a weight in the interior of 
the positive Weyl chamber, the variety $G/B$ embeds in the projective space 
$\p(V_\lambda)$ as the $G$-orbit of a highest weight vector. Here $V_\lambda$ is the irreducible $G$-module with highest weight $\lambda$. 
More generally, let $\lambda$ be a dominant weight (possibly on the boundary of the positive Weyl chamber). Then the $G$-orbit 
of a highest weight vector $v_\lambda$ in the projective space $\p(V_\lambda)$ is a partial flag variety 
$X_\lambda = G/P_\lambda$ where $P_\lambda$ is the
 $G$-stabilizer of $v_\lambda$ in $\p(V_\lambda)$. Let $\lb_\lambda$ be the restriction of the line bundle $\mathcal{O}(1)$ on the
 projective space $\p(V_\lambda)$ to $X_\lambda$. By the Borel-Weil-Bott theorem the space of sections $H^0(X_\lambda, \lb_\lambda)$ is
isomorphic to $V_\lambda^*$ as a $G$-module.

Let $N = \dim_\c(G/B)$. The so-called {\bf string polytope}
is a rational polytope in $\r^N$ such that the number of
integral points in the polytope is equal to $\dim_\c(V_\lambda)$. In
fact, more is true: the integral points in a string polytope
parameterize the so-called {\it crystal basis} for the $G$-module
$V_\lambda$ (see \cite{Littelmann}). The construction of the string polytope depends on the
choice of a \textit{reduced word decomposition $\s = (s_{i_1}, s_{i_2},
  \ldots, s_{i_N})$} for the longest element $w_0$ in the Weyl group
$W$ of $(G, T)$, where here the $s_i$ denote the simple reflections
corresponding to the simple roots $\alpha_i$. 
Thus, the string polytope associated to a dominant
weight $\lambda$ and a reduced decomposition $\s$ is often denoted as 
$\Delta_\s(\lambda)$ \cite{Bernstein-Zelevinsky01, Littelmann}.

\begin{Rem} \label{rem-GC}
The well-known Gel'fand-Cetlin polytopes \cite{G-C} corresponding to
irreducible representations of $\GL(n,\c)$ are special cases of the string
polytopes. More precisely, let $G=\GL(n, \c)$. Here the Weyl group
is $W = S_{n}$. Choose the reduced word decomposition 
$$w_0 = (s_1)(s_2s_1)(s_3s_2s_1) \cdots (s_{n-1}\cdots s_1)$$
for the longest element $w_0 \in S_n$, where $s_i$ denotes the simple transposition exchanging $i$ and
$i+1$. Then $\Delta_{\s}(\lambda)$ can be identified (after a linear change of coordinates) with the
Gel'fand-Cetlin polytope corresponding to $\lambda$. Similarly, for
$G = \SP(2n, \c)$ or $\SO(n, \c)$, for analogous well-chosen reduced
decompositions of the corresponding longest elements of the Weyl
group, we can recover the corresponding Gel'fand-Cetlin polytopes as
string polytopes \cite{Littelmann}.
\end{Rem}

In fact, given a reduced decomposition $\s$, 
there is a rational polyhedral cone $\mathcal{C}_\s$ in $\Lambda_\r^+ \times \r^N$ 
such that each string polytope $\Delta_\s(\lambda)$ is the slice of
the cone $\mathcal{C}_\s$ at $\lambda$, i.e., 
$\Delta_\s(\lambda) = \mathcal{C}_\s \cap \pi^{-1}(\lambda)$ where $\pi: \Lambda_\r^+ \times \r^N \to \Lambda_\r^+$ is the projection on the first factor \cite{Littelmann}.

We note the following. 
\begin{enumerate} 
\item We can define the polytope $\Delta_{\s}(\lambda) = \mathcal{C}_\s \cap \pi^{-1}(\lambda)$ for any $\lambda \in \Lambda_\r^+$.
\item The fact that $\mathcal{C}_\s$ is a convex cone implies that $\Delta_\s(k\lambda) =
k\Delta_\s(\lambda)$ for any $k > 0$. Moreover, for $\lambda_1$, $\lambda_2 \in \Lambda^+_\r$
we have $\Delta_\s(\lambda_1) + \Delta_\s(\lambda_2) \subset \Delta_\s(\lambda_1 + \lambda_2)$.
\item One also proves that the map:
\begin{equation} \label{equ-lambda-proj}
\pi_\lambda: (t_1, \ldots, t_N) \mapsto -\lambda + t_1\alpha_{i_1} + \cdots + t_N\alpha_{i_N},
\end{equation}
projects the string polytope $\Delta_\s(\lambda)$ onto the polytope $P(\lambda)$ which is the convex hull of the Weyl group
orbit of $\lambda$. 
\end{enumerate}

In \cite{Caldero} Caldero constructs a flat deformation of the flag variety $X_\lambda$
to the toric variety $X_{\lambda, w_0}$ corresponding to the string polytope $\Delta_\s(\lambda)$. 
The key ingredient in his construction is a multiplicativity property of the (dual) canonical basis
with respect to the string parametrization. In \cite{Kaveh-string} it is shown that Anderson's toric
degeneration recounted in Section~\ref{sec-toric-degen} is a
generalization of Caldero's construction. 

Fix a $K$-invariant Hermitian metric on $\p(V_\lambda)$. Let $\mu_T$ denote the moment map for the Hamiltonian 
action of the compact torus $T \subset K$ on $X_\lambda \cong G/P_\lambda$. Theorem \ref{th-B} 
applied to $(X_\lambda, \lb_\lambda)$ imply the following:
\begin{Cor} \label{cor-int-system-flag}
Let $n = \dim_\c(X_\lambda)$. There exists an integrable system $\mu_\lambda=(F_1, \ldots, F_n): X_\lambda \to \r^n$ 
(in the sense of Definition \ref{def-int-system}) such that
\begin{itemize}
\item[(a)] the image of $\mu_\lambda$ is the string polytope
  $\Delta_\w(\lambda)$, and 
\item[(b)] the diagram
$$
\xymatrix{
& P(\lambda) \ar@{}[r]|{\subset} & \Lambda_\r \\
X_\lambda \ar[r]_{\mu_\lambda} \ar[ru]^{\mu_T} & \Delta_\w(\lambda) \ar[u]_{\pi_\lambda} \ar@{}[r]|{\subset} & \r^n\\
}
$$
commutes, where the vertical arrow $\pi_\lambda$ is the linear projection given in \eqref{equ-lambda-proj}.\\
\item[(c)] {Moreover, the functions $F_1, \ldots, F_n$ generate a torus action on the open dense subset $U$, on which the $F_i$ are differentiable and Poisson commute, and the inverse image of the interior of $\Delta$ is 
contained in $U$.}
\end{itemize}  
\end{Cor}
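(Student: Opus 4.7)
The plan is to apply Theorems~\ref{th-C}, \ref{th-D} and~\ref{th-int-system-GIT-part1-part2} to the pair $(X_\lambda, \lb_\lambda)$, taking as input the valuation on $\c(X_\lambda)$ that encodes Lusztig's string parametrization indexed by the reduced word $\w$. By the results of \cite{Kaveh-string} (which reinterpret Caldero's toric degeneration \cite{Caldero} within the Newton-Okounkov body framework of Section~\ref{subsec-valuation}), there exists a valuation $v_\w: \c(X_\lambda) \setminus \{0\} \to \z^N$ with one-dimensional leaves such that:
\begin{itemize}
\item[(i)] the value semigroup $S = S(R(\lb_\lambda), v_\w)$ is finitely generated,
\item[(ii)] the associated Newton-Okounkov body coincides with the string polytope $\Delta_\w(\lambda)$, and
\item[(iii)] $v_\w$ is invariant under the action of the maximal torus $\T \subset G$ on $\c(X_\lambda)$.
\end{itemize}

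Since the generalized flag variety $X_\lambda = G/P_\lambda$ is smooth and $S$ is finitely generated by (i), Theorem~\ref{th-C} applied with this input immediately produces the integrable system $\mu_\lambda = (F_1, \ldots, F_n): X_\lambda \to \r^n$ whose moment image coincides with $\Delta(R) = \Delta_\w(\lambda)$ by (ii), establishing~(a). For~(c), I would invoke Proposition~\ref{prop-no-crit-point}, which shows that the linear projection $p$ used in the construction of the toric degeneration $\X$ can be chosen so that $\pi: \X \to \c$ has no critical points on the smooth locus of $X_0$; Theorem~\ref{th-D} then yields both the torus action on a dense open subset $U \subseteq X_\lambda$ and the inclusion $\mu_\lambda^{-1}(\Delta_\w(\lambda)^\circ) \subseteq U$.

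For part~(b), the $\T$-invariance of $v_\w$ from (iii) ensures, via Lemma~\ref{prop-tilde-v-invariant}, that the extended valuation $\tilde{v}_\w$ on $R$ is also $\T$-invariant. The SAGBI basis $\{f_{ij}\}$ underlying the degeneration can therefore be taken to consist of $\T$-weight vectors, so that Proposition~\ref{prop-lift-grading} and Corollary~\ref{cor-lift-action} lift the $\T$-action to $\X$ in a manner compatible with $\pi$. Applying the upper triangle of the diagram in Theorem~\ref{th-int-system-GIT-part1-part2} (with $\HH = \T$ and $H = T$) gives the commutative relation $\mu_T = \tilde{\lambda}_\r \circ \mu_\lambda$, where $\tilde{\lambda}_\r: \r^N \to \Lambda_\r$ is the linear map induced by the character homomorphism $\tilde{\lambda}: S \to \Lambda$ of Lemma~\ref{prop-tilde-lambda}. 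The main point to verify is that this $\tilde{\lambda}_\r$ agrees with $\pi_\lambda$ from~\eqref{equ-lambda-proj}; this reduces to the standard computation (see \cite{Littelmann, Bernstein-Zelevinsky01}) that a dual canonical basis element with string parameters $(t_1, \ldots, t_N)$ carries $\T$-weight $\lambda - \sum_j t_j \alpha_{i_j}$, which, after accounting for the normalization $\tilde{\lambda}(1,0) = \lambda_0$ and the sign conventions built into the affine embedding of $\Delta_\w(\lambda)$ into $\{1\} \times \r^N$, precisely recovers the formula for $\pi_\lambda$. The convexity theorem of Atiyah and Guillemin-Sternberg then identifies $\mu_T(X_\lambda)$ with $P(\lambda)$, completing the diagram in~(b).
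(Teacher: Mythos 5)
Your proposal takes essentially the same route as the paper, which deduces the corollary from Theorems~\ref{th-C} and~\ref{th-D} applied to $(X_\lambda,\lb_\lambda)$ using the valuation from \cite{Kaveh-string} whose value semigroup is finitely generated and whose Newton-Okounkov body is $\Delta_\w(\lambda)$, together with the $\T$-equivariance/GIT compatibility machinery of Section~\ref{sec-GIT-part2} (in particular Theorem~\ref{th-int-system-GIT-part1-part2}) for the commuting triangle in (b). The paper leaves most of these steps implicit, and your write-up correctly supplies them, including the identification of the character homomorphism $\tilde{\lambda}_\r$ with the affine projection $\pi_\lambda$ via the standard weight formula for string parameters.
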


\subsection{Spherical varieties}

In fact, the constructions and results in Section
\ref{subsec-flag-var} can be extended to the larger class of spherical
varieties. Spherical varieties are the algebraic analogues of 
multiplicity-free Hamiltonian spaces. More precisely, let $G$ be a
connected reductive algebraic group. Let $X$ be a {normal algebraic variety} equipped
with an algebraic $G$-action.
Then $X$ is called {\it spherical} if a (and hence any) Borel subgroup 
of $G$ has a dense open orbit. Flag varieties $G/P_\lambda$ and
$\T$-toric varieties are examples of spherical varieties, with respect
to the actions of $G$ and $\T$, respectively.

Let $X$ be a normal projective spherical $G$-variety of dimension $n$
and $\lb$ a $G$-linearized very ample line bundle. Then $X$ embeds
$G$-equivariantly in the projective space $\p(V)$ where $V = H^0(X,
\lb)^*$. Let $K$ denote a maximal compact subgroup of $G$ as in the
previous section, and fix a
$K$-invariant Hermitian product on $V$. This induces a K\"ahler metric
on $\p(V)$ and hence on the smooth locus of $X$. With respect to the
corresponding symplectic structure, (the smooth locus of) $X$ is a
Hamiltonian $K$-space; let $\mu_K$ denote the moment map. 
The Kirwan polytope, denoted $Q(X, \lb)$, is defined to be the
intersection of the moment map image of $\mu_K$ with the positive Weyl
chamber of $\Lie(K)^*$. 

For this discussion we fix a reduced decomposition $\w$ for the
longest element $w_0$. In \cite{Okounkov3} and \cite{Alexeev-Brion04}
the authors introduce a polytope $\Delta_\w(X, \lb) \subset
\Lambda_\r^+ \times \r^N$ defined as 
$$\Delta_\w(X, \lb) := \{ (\lambda, x) \mid \lambda \in Q(X, \lb),~ x \in \Delta_\w(\lambda) \}.$$
That is, $\Delta_\w(X, \lb)$ is the polytope fibered over the moment polytope $Q(X, \lb)$ with 
the string polytopes as fibers. 
We call the polytope $\Delta_\w(X, \lb)$ the {\it string polytope of the spherical variety $X$}. In \cite{Alexeev-Brion04} and 
\cite{Kaveh-SAGBI} it is shown that there is a flat degeneration of $X$ to the toric variety associated to the 
rational polytope $\Delta_\w(X, \lb)$. In fact, in \cite{Kaveh-string}
it is shown that, with respect to certain choices of valuations, 
the string polytopes $\Delta_\w(\lambda)$ and $\Delta_\w(X, \lb)$ 
can be realized as Newton-Okounkov bodies, and the degenertions in \cite{Caldero}, \cite{Alexeev-Brion04}  and \cite{Kaveh-SAGBI} all fit into the general toric degeneration framework discussed in Section \ref{sec-toric-degen}.

Generalizing the case of the flag varieties discussed in
Section~\ref{subsec-flag-var}, we have the following. Let $P(X, \lb)$
denote the polytope which is the convex hull of the $W$-orbit of the
Kirwan polytope $Q(X,\lb)$. It can be shown that $P(X,\lb)$ is
precisely the moment polytope for the Hamiltonian $T$-action on
$X$. Let $\mu_T$ denote the moment map for this action. 
Applying Theorem \ref{th-B} to $(X, \lb)$, we obtain the
following.

\begin{Th} \label{cor-int-system-spherical}
Let $X$ and $\lb$ be as above. Let $\dim_\c X = n$. Then there exists an integrable system $\mu=(F_1, \ldots, F_n): X \to \r^n$
(in the sense of Definition \ref{def-int-system}) such that
\begin{itemize}
\item[(a)] the image of $\mu$ can be identified with the string
  polytope $\Delta_\w(X, \lb)$, and 
\item[(b)] the diagram
$$
\xymatrix{
& P(X, \lb)\\
X \ar[r]_{\mu} \ar[ru]^{\mu_T} & \Delta_\w(X, \lb) \ar[u]_{\pi}\\
}
$$
commutes, where the vertical arrow $\pi$ is the linear projection given in \eqref{equ-lambda-proj}.
\item[(c)] {Moreover, the functions $F_1, \ldots, F_n$ generate a torus action on the open dense subset $U$, on which the $F_i$ are differentiable and Poisson commute, and the inverse image of the interior of $\Delta$ is 
contained in $U$.}
\end{itemize}  
\end{Th}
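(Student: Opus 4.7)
The plan is to apply Theorems \ref{th-C} and \ref{th-D} to $(X, \lb)$ equipped with a particular $T$-invariant valuation on $\c(X)$ that exhibits the string polytope $\Delta_\w(X, \lb)$ as a Newton--Okounkov body of $R = \bigoplus_{k\geq 0} H^0(X, \lb^{\otimes k})$. Following \cite{Kaveh-string}, I would choose a $T$-invariant valuation $v: \c(X)\setminus\{0\} \to \z^n$ (with $n = \dim_\c X$) with one-dimensional leaves, such that the associated Newton--Okounkov body of $R$ is precisely $\Delta_\w(X, \lb)$. In \cite{Kaveh-string} it is shown that the resulting toric degeneration constructed via Theorem \ref{th-toric-degen} coincides with the degeneration of \cite{Alexeev-Brion04, Kaveh-SAGBI}. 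In particular, the value semigroup $S(R)$ is finitely generated, since its semigroup algebra is the homogeneous coordinate ring of the projective toric variety attached to the rational polytope $\Delta_\w(X, \lb)$.

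With the valuation in hand, a direct application of Theorem \ref{th-C} yields a completely integrable system $\mu = (F_1, \ldots, F_n): X \to \r^n$ whose image is $\Delta_\w(X, \lb)$, proving (a). For part (b), I would exploit the $T$-invariance of $v$: together with the choice of a $T$-weight section $h \in H^0(X, \lb)$, this implies by Lemma \ref{prop-tilde-v-invariant} that the extended valuation $\tilde v$ from \eqref{eq:def tilde v Tinvariant} is $T$-invariant, placing us in the setting of Section \ref{subsec:invariant val}. Proposition \ref{prop-lift-grading} and Corollary \ref{cor-lift-action} then show that the $T$-action on $X$ lifts to an action on the degenerating family $\X$ of Corollary \ref{cor-toric-degen} in a manner satisfying the compatibility hypotheses of Section \ref{sec-GIT-part1}. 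Applying Theorem \ref{th-int-system-GIT-part1-part2} yields the desired commutative diagram, where the vertical projection $\pi: \Delta_\w(X, \lb) \to P(X, \lb)$ is the map $\tilde\lambda_\r$ determined by the $T$-weights of a SAGBI basis; by the construction of $\Delta_\w(X, \lb)$ as a fibration over the Kirwan polytope with string polytope fibers (cf. \cite{Okounkov3, Alexeev-Brion04}), this linear projection coincides with the one recorded in \eqref{equ-lambda-proj}. Part (c) follows immediately from Theorem \ref{th-D} applied to this same family, since Theorem \ref{th-D} provides both the torus action on an open dense $U$ and the containment $\mu^{-1}(\Delta_\w(X, \lb)^\circ) \subset U$.

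The main technical obstacle is the smoothness hypothesis in Theorem \ref{th-C}, which spherical varieties do not satisfy in general. To handle this, one can either restrict the statement to smooth spherical varieties (which already gives a substantial class of new examples, including all flag varieties $G/P_\lambda$) or observe, by tracking through the proof of Theorem \ref{th-A}, that what is truly required is that the family $\X$ be smooth away from the singular locus of the central fibre $X_0$, rather than smoothness of $X$ itself; for many spherical varieties the degeneration of \cite{Alexeev-Brion04, Kaveh-SAGBI} satisfies this weaker condition. A secondary point to verify is the finite generation of $S(R)$ for the chosen $v$, which as noted above is supplied by the existence of the degenerations in \cite{Alexeev-Brion04, Kaveh-SAGBI} together with the identification in \cite{Kaveh-string}.
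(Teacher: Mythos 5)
Your argument takes the same route the paper intends: pick the valuation from \cite{Kaveh-string} that realizes $\Delta_\w(X,\lb)$ as a Newton--Okounkov body (the paper itself notes this identification), apply Theorems \ref{th-C} and \ref{th-D}, and extract the commutative diagram in (b) from the top square of the diagram in Theorem \ref{th-int-system-GIT-part1-part2} by taking $\HH$ to be the maximal torus of $G$. The paper's own ``proof'' of this statement is the one-line remark that Theorems \ref{th-C} and \ref{th-D} apply to $(X,\lb)$, so you have correctly filled in the intended skeleton.

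You have also caught a genuine imprecision in the paper: the spherical-variety setup only assumes $X$ \emph{normal}, whereas Theorem \ref{th-C} (through hypothesis (a) of Theorem \ref{th-A}) requires $X$ \emph{smooth}. Your first fix --- restrict the statement to smooth spherical $G$-varieties --- is the right one, and is apparently what the subsequent Remark implicitly assumes. Your second proposed fix, however, does not provide a way around this: ``$\X$ smooth away from the singular locus of $X_0$'' is \emph{not} a weaker condition than smoothness of $X$. As the paper itself observes when stating hypothesis (a) of Theorem \ref{th-A} (``In particular, the projective variety $X$ is smooth''), that condition implies $X$ smooth: $X \cong X_t$ for $t \neq 0$, and a generic fiber $X_t$ lies entirely in the smooth locus of $\X$ and sits over a non-critical value of $\pi$, hence is a smooth variety. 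So tracking through the proof of Theorem \ref{th-A} does not relax the hypothesis, and the methods of the paper establish this theorem only for smooth spherical $X$.
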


\begin{Rem}
Let $X$ be a smooth spherical $G$-variety of dimension $n$ equipped with a $G$-linearized very ample line bundle $\lb$.
It can be shown that the space of smooth $K$-invariant functions on
$X$ is commutative, in the sense that any two smooth $K$-invariant
functions Poisson-commute. We believe that the 
integrable system $\mu = (F_1, \ldots, F_n)$ in Theorem~\ref{cor-int-system-spherical} can be 
constructed so that $(F_1, \ldots, F_r)$ are smooth $K$-invariant
functions on all of $X$, where $r \leq n$ is the so-called 
{\it rank} of the spherical variety (i.e., the minimal codimension of a $K$-orbit).
\end{Rem}

\subsection{Weight varieties}

We maintain the notation of Section \ref{subsec-flag-var}. Let
$P(\lambda)$ denote the convex hull of the $W$-orbit of $\lambda$.  
Recall that a \textbf{weight variety} $X_{\lambda, \gamma}$ is the GIT quotient of $X_\lambda$ by the action of $\T$ 
twisted by an integral weight $\gamma \in P(\lambda) \cap \Lambda^+$. 
More precisely, let $\gamma \in 
P(\lambda) \cap \Lambda^+$ be a character of $\T$ and $\lb_\lambda(-\gamma)$ be the $\T$-line bundle $\lb_\lambda$
where the action of $\T$ is twisted by $-\gamma$. Then one defines 
$$X_{\lambda, \gamma} := X_\lambda^{ss} (\lb_\lambda(-\gamma)) / \T.$$
Important examples of weight varieties are polygon spaces \cite{HausmannKnutson:1997}. 
 
An alternative description is as follows. Let $V_\lambda^{(\gamma)}$
denote the $\gamma$-weight space in the $G$-module $V_\lambda$. The
weight variety $X_{\lambda, \gamma}$ is $\Proj(R_\lambda^{(\gamma)})$
where
$$R_\lambda^{(\gamma)} := \bigoplus_{k} (V_{k\lambda}^*)^{(k\gamma)}.$$
In other words, $R_\lambda^{(\gamma)}$ is the $\T$-invariant subalgebra of $R(L_\lambda)$ for the $(-\gamma)$-twisted
action of $\T$ on $R(L_\lambda)$, defined by
$$t *_\gamma f := \gamma(t)^{-k} (t \cdot f) \quad \forall t \in \T,~ \forall f \in L^k.$$
When $\gamma$ is a regular value for the moment map $\mu_T$, and $T$
acts freely on $\mu_T^{-1}(0)$, 
the variety $X_{\lambda, \gamma}$ can also be identified with the symplectic quotient of $X_\lambda$ at the value $\gamma$. 

Using Theorem~\ref{cor-degen-commutes-with-GIT} we can now recover the
following theorem of Foth and Hu \cite{Foth-Hu}.

\begin{Th} \label{th-Foth-Hu}
There exists a flat degeneration of the weight variety $X_{\lambda, \gamma}$ to a projective toric variety 
$X_{\lambda, \gamma, 0}$ corresponding to the polytope
$$\Delta_\w(\lambda, \gamma) = \Delta_\w(\lambda) \cap \pi_\lambda^{-1}(\gamma),$$
obtained by slicing the string polytope $\Delta_\w(\lambda)$ at $\gamma$. Here the projection $\pi_\lambda$ 
is that given in \eqref{equ-lambda-proj}.
\end{Th}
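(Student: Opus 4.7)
The plan is to apply the framework of Section~\ref{sec-GIT-part2} for toric degenerations compatible with torus quotients to the pair $(X_\lambda, \lb_\lambda)$, using a $\T$-invariant Okounkov valuation whose Newton-Okounkov body is the string polytope. First, by \cite{Kaveh-string} there exists a $\T$-invariant valuation $v: \c(X_\lambda)\setminus\{0\}\to\z^N$ with one-dimensional leaves such that, with respect to a highest-weight section $h\in L_\lambda$, the extended valuation $\tilde v$ (which is $\T$-invariant by Lemma~\ref{prop-tilde-v-invariant}) gives value semigroup $S := S(R(\lb_\lambda))$ whose associated Newton-Okounkov body is the string polytope $\Delta_\w(\lambda)$. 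Since $\Delta_\w(\lambda)$ is rational polyhedral, $S$ is finitely generated. Lemma~\ref{prop-tilde-lambda} then produces a group homomorphism $\tilde\lambda_*:\z^{N+1}\to\Lambda$ whose value on $(k,\tilde v(f/h^k))$ is the $\T$-weight of any $\T$-weight vector $f\in L_\lambda^k$.

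Second, I would encode the twisted action in the framework of Section~\ref{sec-GIT-part2} by introducing the \emph{shifted} $\Lambda$-grading on $R(\lb_\lambda)$ in which a $\T$-weight vector $f \in L_\lambda^k$ of weight $\chi$ is assigned degree $\chi - k\gamma$. The degree-zero subalgebra for this grading is precisely $R_\lambda^{(\gamma)}$. Paralleling the proof of Lemma~\ref{prop-R'-S'}, the value semigroup of $R_\lambda^{(\gamma)}$ is
\begin{equation*}
S' = \{(k,a)\in S \mid \tilde\lambda_*(k,a) = k\gamma\},
\end{equation*}
so by (the easy half of) Lemma~\ref{prop-S'} its associated convex body is exactly the slice
\begin{equation*}
\Delta(S') \;=\; \Delta_\w(\lambda)\cap\pi_\lambda^{-1}(\gamma) \;=\; \Delta_\w(\lambda,\gamma),
\end{equation*}
and $S'$ is finitely generated by Lemma~\ref{prop-S'}(a).

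Third, I would choose a SAGBI basis $\{f_{ij}\}$ for $(R(\lb_\lambda),\tilde v)$ consisting of $\T$-weight vectors and such that a subcollection is a SAGBI basis for $R_\lambda^{(\gamma)}$; this is possible because within each graded piece $L_\lambda^k$ one may pick $\tilde v$-compatible bases inside $\T$-weight spaces (using Lemma~\ref{lem-valuation-orthonomal} applied to the $\T$-weight decomposition). Conditions (1)--(2) preceding Corollary~\ref{cor-degen-commutes-with-GIT} then hold for the shifted grading. Applying Proposition~\ref{prop-lift-grading} and Corollary~\ref{cor-degen-commutes-with-GIT} yields flat families $\pi:\X\to\c$ and $\pi':\X'\to\c$ fitting into diagram~\eqref{equ-diagram-commutative}, where $\X'$ is the GIT quotient of $\X$ by the $(-\gamma)$-twisted $\T$-action. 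The general fibre of $\X'$ is $\Proj R_\lambda^{(\gamma)} = X_{\lambda,\gamma}$, while the special fibre is $\Proj\c[S'] = X_{\lambda,\gamma,0}$, whose normalization is the toric variety of the rational polytope $\Delta_\w(\lambda,\gamma)$.

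The main technical point is the identification of $S'$ with the slice $\{(k,a)\in S \mid \tilde\lambda_*(k,a)=k\gamma\}$: the nontrivial inclusion needs the one-dimensional-leaves property of $\tilde v$ to lift any such $(k,a)$ to a genuine $\T$-weight vector of weight $k\gamma$ in $R_\lambda^{(\gamma)}$, and the $\T$-invariance of $\tilde v$ to know that weight assignments descend to $S$. Once this equality is in hand the degeneration is produced by the equivariant refinement of Anderson's construction, and the resulting special fibre is by definition the toric variety attached to $S'$ (equivalently, to $\Delta_\w(\lambda,\gamma)$), completing the proof.
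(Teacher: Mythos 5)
Your proposal is correct and takes essentially the same route as the paper, which derives Theorem~\ref{th-Foth-Hu} as a direct consequence of Corollary~\ref{cor-degen-commutes-with-GIT} applied to the $(-\gamma)$-twisted $\T$-action on $X_\lambda$ with a $\T$-invariant string valuation from \cite{Kaveh-string}. The paper leaves the details implicit; your explicit handling of the twist via the shifted $\Lambda$-grading (assigning $f\in L_\lambda^k$ of original weight $\chi$ the degree $\chi-k\gamma$, so that the degree-zero part is $R_\lambda^{(\gamma)}$), and the identification $S'=\{(k,a)\in S\mid\tilde\lambda_*(k,a)=k\gamma\}$ giving $\Delta(S')=\Delta_\w(\lambda)\cap\pi_\lambda^{-1}(\gamma)$ via Lemma~\ref{prop-S'}, correctly supplies exactly what is needed to invoke Proposition~\ref{prop-lift-grading} and Corollary~\ref{cor-degen-commutes-with-GIT}.
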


As in Section \ref{sec-GIT-part1}, the GIT quotient $X_{\lambda, \gamma}$ inherits a K\"ahler structure from $X_\lambda$. 
Let $n' = \dim_\c X_{\lambda, \gamma}$. From 
Corollary \ref{cor-int-system-flag} and Theorem \ref{th-int-system-GIT-part1-part2} we
obtain the following. 

\begin{Cor}
Under the assumptions and notation as above, there exists an integrable system $\mu_{\lambda, \gamma} = (F_1, \ldots, F_{n'})$ 
on $X_{\lambda, \gamma}$ (in the sense of Definition \ref{def-int-system}) such that
\begin{itemize}
\item[(a)] the image of $\mu_{\lambda, \gamma}$ is the string polytope
  $\Delta_\w(\lambda, \gamma)$, and 
\item[(b)] the diagram
\begin{equation} \label{equ-comm-diag-weight-var}
\xymatrix{
& P(\lambda) \ar@{}[r]|{\subset} & \Lie(T)^*\\
X_\lambda \ar[r]^{\mu_\lambda} \ar[ru]^{\mu_T} & \Delta_\w(\lambda) \ar@{}[r]|{\subset} 
\ar[u]_{\pi_\lambda} & \r^n \\
\mu_T^{-1}(\gamma) \ar@{^{(}->}[u]^i \ar[d]^p & \\
X_{\lambda, \gamma} \ar[r]^{\mu_{\lambda, \gamma}} & \Delta_\w(\lambda, \gamma) 
\ar@{}[r]|{\subset} \ar@{^{(}->}[uu] & \r^{n'}\\
}
\end{equation}
commutes, where the map $\r^{n'} \hookrightarrow \r^{n}$ is the inclusion of the first $n'$ coordinates.
\end{itemize}
\end{Cor}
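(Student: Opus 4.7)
The plan is to apply Theorem \ref{th-int-system-GIT-part1-part2} to the specific situation where $X = X_\lambda$, $\HH = \T$, and the GIT quotient is taken with respect to the $\T$-linearization of $\lb_\lambda$ twisted by $-\gamma$. The input integrable system on $X_\lambda$ is the one already constructed in Corollary \ref{cor-int-system-flag}, which arose from the ``string valuation'' on $\c(G/P_\lambda)$ associated to the reduced word $\w$ (cf.\ \cite{Kaveh-string}); Caldero's and Anderson's toric degeneration to the toric variety of $\Delta_\w(\lambda)$ is an instance of the construction of Section~\ref{sec-toric-degen}, and the Foth--Hu degeneration of Theorem \ref{th-Foth-Hu} is then its GIT quotient in the sense of Corollary \ref{cor-degen-commutes-with-GIT}.

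First, I would verify the compatibility hypotheses of Section \ref{sec-GIT-part2}. The string valuation is $\T$-invariant because the highest-weight vector $v_\lambda$ is a $\T$-weight vector and one can check, as in Lemma \ref{prop-tilde-v-invariant}, that the extension $\tilde v$ to $R(\lb_\lambda)$ remains $\T$-invariant. Using Lemma \ref{lem-valuation-orthonomal} one can choose the SAGBI generators $\{f_{ij}\}$ for $(R(\lb_\lambda),\tilde v)$ to be simultaneously $\T$-weight vectors and orthonormal with respect to the fixed $K$-invariant Hermitian metric on $V_\lambda$. A subcollection then forms a SAGBI basis for the twisted-invariant subalgebra $R_\lambda^{(\gamma)} = \bigoplus_k (V_{k\lambda}^*)^{(k\gamma)}$, thereby placing us in the setting of Corollary \ref{cor-lift-action} and Corollary \ref{cor-degen-commutes-with-GIT}.

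Next, I would identify the group homomorphism $\tilde\lambda : \z^{n+1} \to \Lambda$ from Lemma \ref{prop-tilde-lambda} in this case. Because twisting the linearization by $-\gamma$ amounts to replacing $\tilde\lambda$ by the affine map $(k,u)\mapsto \tilde\lambda(k,u) - k\gamma$, the subsemigroup $S'$ computing the invariant algebra becomes the intersection of $S$ with the affine slice $\tilde\lambda^{-1}(k\gamma)$ in each graded piece. Projecting to the slice $\{1\}\times \r^n$ and using the explicit formula \eqref{equ-lambda-proj} for $\pi_\lambda$, Lemma \ref{prop-S'}(b) identifies the convex body associated to $S'$ with $\Delta_\w(\lambda)\cap \pi_\lambda^{-1}(\gamma) = \Delta_\w(\lambda,\gamma)$. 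This simultaneously recovers Theorem \ref{th-Foth-Hu} and verifies part (a) of the statement.

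Finally, Theorem \ref{th-int-system-GIT-part1-part2} applied to the degenerating family produces the integrable system $\mu_{\lambda,\gamma} = (F_1,\ldots,F_{n'})$ on $X_{\lambda,\gamma}$, with moment image equal to $\Delta_\w(\lambda,\gamma)$, and the bottom half of the diagram \eqref{equ-comm-diag-weight-var}; concatenating with the diagram of Corollary \ref{cor-int-system-flag} for $X_\lambda$ yields the full commutative diagram. I expect the main bookkeeping obstacle to be the twist by $\gamma$: the GIT framework of Section \ref{sec-GIT-part2} is formulated for quotients at the origin, so one must translate the twisted setup $X_\lambda^{ss}(\lb_\lambda(-\gamma))/\T$ into the language there by shifting the moment map $\mu_T$ to $\mu_T - \gamma$ and correspondingly shifting $\tilde\lambda$, and then check that the inclusion $\r^{n'}\hookrightarrow \r^n$ as the first $n'$ coordinates is consistent with the way the SAGBI basis has been ordered so that the functions $F_1,\ldots,F_{n'}$ on the quotient are precisely the first $n'$ components of $\mu_\lambda$ restricted to $\mu_T^{-1}(\gamma)$.
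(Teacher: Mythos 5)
Your proposal is correct and takes essentially the same approach as the paper: the paper's proof consists entirely of the one-sentence citation ``From Corollary~\ref{cor-int-system-flag} and Theorem~\ref{th-int-system-GIT-part1-part2} we obtain the following,'' and your write-up spells out exactly the verification that those results apply. Your explicit attention to the twist-by-$\gamma$ issue (translating $X_\lambda^{ss}(\lb_\lambda(-\gamma))/\T$ into the ``quotient at $0$'' language of Sections~\ref{sec-GIT-part1}--\ref{sec-GIT-part2} by shifting $\mu_T \mapsto \mu_T - \gamma$ and correspondingly shifting $\tilde\lambda$) is a legitimate bookkeeping point that the paper leaves entirely implicit, and it is handled correctly.
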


\def\cprime{$'$}

\end{document}